\documentclass[11pt]{amsart}
\usepackage{amsmath}
\usepackage{amsthm,amssymb}
\usepackage{hyperref}
\usepackage{amssymb}
\usepackage{graphicx}
\usepackage[utf8]{inputenc}
\input xy
\xyoption{all}
\usepackage{color}
\usepackage{enumerate}
\newtheorem{theorem}{Theorem}[section]

\newtheorem{lemma}[theorem]{Lemma}

\newtheorem{corollary}[theorem]{Corollary}
\newtheorem{proposition}[theorem]{Proposition}
\theoremstyle{definition}

\newtheorem{remark}[theorem]{Remark}
\newtheorem{problem}[theorem]{Problem}

\newcommand{\ch}{\mbox{\rm char}}

\newcommand{\Z}{\mathbb{Z}}
\newcommand{\C}{\mathbb{C}}

\begin{document}

\title[On semi-nil clean rings ...]{On semi-nil clean rings with applications}

\author[M. H. Bien]{M. H. Bien$^{1,2}$}
\author[P. V. Danchev]{P. V. Danchev$^{3}$}
\author[M. Ramezan-Nassab]{M. Ramezan-Nassab$^{4,5}$}

\address{[1] Faculty of Mathematics and Computer Science, University of Science, Ho Chi Minh City, Vietnam}

\address{[2] Vietnam National University, Ho Chi Minh City, Vietnam}

\address{[3] Institute of Mathematics and Informatics, Bulgarian Academy of Sciences, 1113 Sofia, Bulgaria}

\address{[4] Department of Mathematics, Kharazmi University, 50 Taleghani Street, Tehran, Iran} ~~
    \address{[5] School of Mathematics, Institute for Research in Fundamental Sciences (IPM), P.O. Box 19395-5746, Tehran, Iran.} ~~~~\\

\medskip
\medskip

\email{M. H. Bien: mhbien@hcmus.edu.vn \newline
        P. V. Danchev: danchev@math.bas.bg; pvdanchev@yahoo.com \newline
        M. Ramezan-Nassab: ramezann@khu.ac.ir}

\keywords{Clean ring; (Weakly) Periodic ring; Semi-nil clean ring, Group ring; Unit group.\\
\protect
\indent 2020 {\it Mathematics Subject Classification.} 16L30, 16S34, 16U60, 16U99.\\
\protect
Corresponding author: Peter V. Danchev.}

\begin{abstract}
We investigate the notion of \textit{semi-nil clean} rings, defined as those rings in which each element can be expressed as a sum of a periodic and a nilpotent element. Among our results, we show that if $R$ is a semi-nil clean NI ring, then $R$ is periodic. Additionally, we demonstrate that every group ring $RG$ of a nilpotent group $G$ over a weakly 2-primal ring $R$ is semi-nil clean if, and only if, $R$ is periodic and $G$ is locally finite.

Moreover, we also study those rings in which every unit is a sum of a periodic and a nilpotent element, calling them \textit{unit semi-nil clean} rings. As a remarkable result, we show that if $R$ is an algebraic algebra over a field, then $R$ is unit semi-nil clean if, and only if, $R$ is periodic.

Besides, we explore those rings in which non-zero elements are a sum of a torsion element and a nilpotent element, naming them \textit{t-fine} rings, which constitute a proper subclass of the class of all fine rings. One of the main results is that matrix rings over t-fine rings are again t-fine rings.
\end{abstract}

\maketitle


\section{Introduction and Motivation}

All rings considered in this paper are unitary (i.e., containing an identity element) and associative. Recall that an element $x$ in a ring $R$ is \textit{periodic} if there exist two different natural numbers $m$ and $n$ such that $x^m=x^n$. A periodic ring is the one in which each of its elements is periodic. More generally, a ring $R$ is called {\it weakly periodic} if every $x$ in $R$ can be written in the form $x = a + b$ for some potent element $a$ (i.e., $a^n=a$ for a positive integer $n$ depending on $a$) and some nilpotent element $b$ in $R$. It is well known that every periodic ring is weakly periodic (see \cite[Theorem~10.1.1]{Sheibani}), but according to \cite[Examples 3.1 and 3.2]{Ser}, the converse is manifestly {\it not} true.

Imitating \cite{Bisht}, an element $x$ in a ring $R$ is said to be {\it semi-nil clean} element if it can be written in the form $x = a + b$, where $a$ is a periodic element and $b$ is a nilpotent element. In addition, if $ab=ba$ in $R$, the element $x$ is said to be {\it strongly semi-nil clean}. A (strongly) semi-nil clean ring is the one in which all its elements are (strongly) semi-nil clean. It is obvious that each weakly periodic ring is semi-nil clean, but at the moment, we do not know whether the two classes of these rings differ each other or are the same.

Let $R$ be a semi-nil clean ring. For $x\in R$, we write $x-1=a+b$, where $a$ is periodic and $b$ is nilpotent. Thus, $x$ can be written as the sum of a periodic element $a$ and a unipotent element $b+1$ in $R$. Therefore, each semi-nil clean ring $R$ is a {\it semi-clean} ring, in the sense that each element of $R$ can be written as the sum of a periodic element and a unit element in $R$. But, however, there are semi-clean rings that are {\it not} semi-nil clean. To give an example, let $R$ be a finite ring. Then, the power series ring $R[[t]]$ is semi-clean in accordance with \cite[Proposition~3.3]{Ye}, while it is {\it not} semi-nil clean, because the central element $t$ is not periodic (see Proposition~\ref{center} stated below). As another example, inspired by \cite{Ye}, the group ring $\mathbb{Z}_{(p)}C_3$ is semi-clean, where $p$ is a prime integer, $C_3$ is a cyclic group of order 3, and $\mathbb{Z}_{(p)}$ is the localization of $\mathbb{Z}$ at $p$. However, $\mathbb{Z}_{(p)}C_3$ is definitely {\it not} semi-nil clean utilizing Theorem~\ref{local-nilpotent} quoted below.

\medskip

Thus, we have the following inclusion relationships between these classes of rings:

$$\{\text{periodic} \}\subsetneqq \{\text{weakly periodic}\} \subseteq\{\text{semi-nil clean}\}\subsetneqq\{\text{semi-clean}\}.$$

\medskip
As already noticed, we are unable to decide presently whether the middle inclusion above is strict or not. So, we now arrive at our motivating question.

\begin{problem}\label{1}
Is each semi-nil clean ring weakly periodic?
\end{problem}

Note that the class of all semi-nil clean rings also properly contains the class of all {\it nil clean} rings, i.e., the class of rings $R$ in which each of its elements is a sum of an idempotent and a nilpotent in $R$ (for instance, each finite field is semi-nil clean but {\it not} nil clean).

\medskip

On the other side, it is worthwhile noticing that all semi-nil clean rings are always additively $2$-periodic as defined in \cite{Additively}.

\medskip

Our principal work is organized as follows: in the next Section~\ref{sec2}, the basic properties of semi-nil clean rings are investigated. We show the curious fact that each strongly semi-nil clean ring is periodic (Proposition~\ref{strongly}). Let $R$ be a semi-nil clean ring. As some remarkable results, we show that if $R$ is either an NI ring, or has only finitely many non-central nilpotent elements, then $R$ is periodic (see Theorems~\ref{NI} and ~\ref{FN} as well as Proposition~\ref{newnil}, respectively).

In the subsequent Section~\ref{sec3}, we apply our results to some aspects of semi-nil clean group rings. We will show that every group ring $RG$ of a nilpotent group $G$ over a weakly 2-primal ring $R$ is semi-nil clean if, and only if, $RG$ is periodic if, and only if, $R$ is semi-nil clean and $G$ is locally finite (Theorem~\ref{local-nilpotent}). Likewise, there is a similar result provided $R$ is a local ring whose unit group is locally nilpotent (Theorem~\ref{local}).

In Section~\ref{sec4}, we study those rings $R$ in which every unit is a sum of a periodic element and a nilpotent element, which we call {\it unit semi-nil clean} rings. Among other results, we will prove that if $R$ is an algebraic algebra over a field, then $R$ is unit semi-nil clean if, and only if, $R$ is periodic (Theorem~\ref{algebraic2}). Some ring extensions of unit semi-nil clean rings, such as polynomial rings and matrix rings, are explored as well.

Recall that, as in \cite{CL}, a ring $R$ is said to be \textit{fine} if every non-zero element in it is a sum of a unit and a nilpotent. As a proper subclass of both the class of all fine rings and the class of all weakly periodic rings, we explore in Section~\ref{sec5} those rings in which all non-zero elements are a sum of a torsion element and a nilpotent element, naming them \textit{t-fine} rings. One of the main results is that matrix rings over t-fine rings are too t-fine rings (Theorem~\ref{matrix3}). We also show that a for a commutative ring $R$, the matrix ring ${\rm M}_n(R)$ is a t-fine ring if, and only if, $R$ is a locally finite field (Corollary~\ref{comm-fine}).


\medskip

\section{Main Results}\label{sec2}

If $R$ is a ring, the center, the Jacobson radical and the set of all nilpotent elements of $R$ are denoted by ${\rm Z}(R)$, ${\rm J}(R)$ and ${\rm Nil}(R)$, respectively. Also, ${\rm P}(R)$ and $\mathfrak{u}(R)$ stand for the set of all periodic elements and the set of all unipotent elements in $R$, respectively. We denote the unit group of $R$ by ${\rm U}(R)$.

\medskip

We observe that, if the ring $R$ is semi-nil clean, i.e., $R={\rm P}(R)+{\rm Nil}(R)$, then the equality $R={\rm P}(R)+\mathfrak{u}(R)$ surely also holds.

\medskip

We now begin with the following lemma, which is similar to \cite[Lemma~2.2]{Additively}. As we will need the idea of its proof later on in this paper, we include its proof for the sake of completeness and the readers' convenience.

\begin{lemma}\label{center1}
If $R$ is a ring such that each of its elements is a sum of two elements in ${\rm P}(R)\cup\mathfrak{u}(R)$, then $\ch(R)>0$ and $\mathfrak{u}(R)\subseteq{\rm P}(R)$.
\end{lemma}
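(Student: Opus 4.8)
The plan is to establish the two conclusions separately, dealing with $\ch(R)>0$ first, since the inclusion $\mathfrak{u}(R)\subseteq{\rm P}(R)$ becomes easy once a positive characteristic is available. (If $R=0$ both assertions are trivial, so we may assume $R\neq 0$.)

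To prove $\ch(R)>0$ I would argue by contradiction: assume $\ch(R)=0$, so that the canonical map $\Z\to R$ is injective, and apply the hypothesis to the central element $3\cdot 1_R$, writing $3\cdot 1_R=p+q$ with $p,q\in{\rm P}(R)\cup\mathfrak{u}(R)$. The crucial elementary point is that $p$ and $q$ must then commute: from $q=3\cdot 1_R-p$ one gets $pq=3p-p^{2}=qp$. Hence $A:=\Z[p,q]$ is a commutative subring of $R$. Moreover each of $p$ and $q$ is integral over $\Z$, a periodic element being a root of a monic polynomial $t^{m}-t^{n}$ (or $t^{m}-1$) and a unipotent element a root of $(t-1)^{k}$; therefore $A$ is module-finite over $\Z$. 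Since $\ch(R)=0$, the ring $A$ contains a copy of $\Z$, so $B:=A\otimes_{\Z}\Q$ is a nonzero finite-dimensional commutative $\Q$-algebra.

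Passing to $B/{\rm Nil}(B)\cong K_{1}\times\cdots\times K_{r}$, a finite product of number fields with $r\geq 1$, fix an index $i$ and an embedding $\sigma\colon K_{i}\hookrightarrow\C$. If $p$ is periodic, its image $\bar p\in K_{i}$ satisfies $\bar p^{m}=\bar p^{n}$, hence is either $0$ or a root of unity, so $|\sigma(\bar p)|\leq 1$; if $p$ is unipotent, then $\bar p-1$ is nilpotent in the field $K_{i}$, hence $\bar p=1$ and $|\sigma(\bar p)|=1$. In either case $|\sigma(\bar p)|\leq 1$, and likewise $|\sigma(\bar q)|\leq 1$. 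But the image of $3\cdot 1_R=p+q$ in $K_{i}$ is $3\cdot 1_{K_{i}}$, of $\sigma$-absolute value $3$, whence
$$3=|\sigma(\bar p)+\sigma(\bar q)|\leq|\sigma(\bar p)|+|\sigma(\bar q)|\leq 2,$$
a contradiction. Consequently $\ch(R)=c>0$. For the second conclusion, take $u\in\mathfrak{u}(R)$ and write $u=1+b$ with $b^{k}=0$. The subring $\Z[b]$ is a homomorphic image of $(\Z/c\Z)[t]/(t^{k})$, hence a finite ring, and $u\in\Z[b]$; therefore the sequence $u,u^{2},u^{3},\dots$ eventually repeats, say $u^{s}=u^{t}$ with $s\neq t$, so $u\in{\rm P}(R)$, which gives $\mathfrak{u}(R)\subseteq{\rm P}(R)$.

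The step I expect to be the real obstacle is the absence of commutativity in a decomposition $x=p+q$: for a general $x$ the ring $\Z[p,q]$ need not be commutative, nor module-finite, so the archimedean size estimate is unavailable. This is precisely why the argument is run on the central element $3\cdot 1_R$, which forces $p$ and $q$ to commute and confines the whole computation to a commutative order inside a product of number fields; choosing the integer $3$ (rather than $1$ or $2$) is what produces the numerical contradiction $3\leq 2$.
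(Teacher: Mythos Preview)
Your proof is correct and follows the same route as the paper's: apply the hypothesis to the central element $3$, use centrality to force $pq=qp$, build a commutative ring integral over $\Z$, map it to $\C$, and derive the contradiction $3\leq 2$ from the absolute-value bound on periodic and unipotent complex numbers. The only cosmetic differences are that the paper obtains the homomorphism to $\C$ by invoking the extension theorem for integral extensions (Atiyah--MacDonald) rather than by tensoring with $\Q$ and using Artinian structure, and for the inclusion $\mathfrak{u}(R)\subseteq{\rm P}(R)$ it cites the identity $u^{m^{s}}=1$ from Danchev--Lam where you argue directly via finiteness of $\Z[b]$.
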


\begin{proof}
If $R$ is of zero characteristic, then $\Z$ can be viewed as a subring of $R$. By hypothesis, there are two elements $u$ and $v$ in ${\rm P}(R)\cup\mathfrak{u}(R)$ such that $3=u+v$, and, as $u+v$ is central in $R$, we have $uv=vu$. Thus, the subring $B=\Z[u,v]$ of $R$ generated by $u$ and $v$ over $\Z$ is necessarily a commutative ring which is integral over $\Z$ (see \cite[Proposition~5.1 and Corollary~5.3]{Atiyah}).

Now, let $f:\Z \to \C$ be the inclusion homomorphism. So, invoking \cite[Exercise~2, p.~67]{Atiyah}, there exists a ring homomorphism $\alpha:B\to \C$ such that $\alpha|_\Z=f$. Therefore, $3=\alpha(3)=\alpha(u)+\alpha(v)$. Note that, if $x$ is either a periodic or a unipotent element of $\C$, then the absolute value $|x|$ is either $0$ or $1$. Consequently, $3= |\alpha(u)+\alpha(v)|\leq 2$, which is a contradiction. Hence, the characteristic of $R$ is positive.

Now, suppose that $\ch(R)=m>0$ and $u\in \mathfrak{u}(R)$. Then, owing to \cite[Theorem~3.2]{Lam-Danchev}, we obtain $u^{m^s}=1$ for some integer positive $s$. Thus, $u\in {\rm P}(R)$, as required.
\end{proof}


\begin{lemma}\label{center0}\cite[Lemma~2.6]{Bou}
Let $R$ be a ring of positive characteristic and let $x\in R$. If $x = a + b$ for some $a, b\in{\rm P}(R)$ and $ab=ba$, then $x\in {\rm P}(R)$.
\end{lemma}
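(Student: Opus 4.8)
The plan is to pass from $R$ to the subring generated by $a$, $b$ and $1$, to observe that this subring is \emph{finite}, and then to use that every element of a finite ring is periodic; in particular $x=a+b$ will be periodic.

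First, since $\ch(R)=m>0$, the prime subring $R_0$ of $R$ is the image of the canonical homomorphism $\Z\to R$, whose kernel is $m\Z$, so $R_0\cong\Z/m\Z$ is finite. Because $ab=ba$ (and $a,b$ commute with $1$), the subring $S:=R_0[a,b]$ is commutative. Now exploit periodicity: since $a\in{\rm P}(R)$ there are integers $k>l\geq 1$ with $a^k=a^l$, and multiplying this relation by suitable powers of $a$ gives $a^{n}=a^{\,n-(k-l)}$ for every $n\geq k$; iterating, every power $a^n$ lies in the $R_0$-span of $1,a,\dots,a^{k-1}$, so $R_0[a]$ is a finitely generated $R_0$-module. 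Similarly, picking $k'>l'\geq 1$ with $b^{k'}=b^{l'}$, the ring $S=R_0[a][b]$ is spanned over $R_0[a]$ by $1,b,\dots,b^{k'-1}$, hence is a finitely generated $R_0[a]$-module. By transitivity of module-finiteness, $S$ is a finitely generated module over the finite ring $R_0$, and therefore $S$ is a finite ring (the obvious surjection $R_0^{\,N}\twoheadrightarrow S$ bounds its cardinality).

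Finally, in any finite ring every element is periodic: for $s\in S$ the infinitely many powers $s,s^2,s^3,\dots$ cannot be pairwise distinct, so $s^i=s^j$ for some $i\neq j$. Applying this to $x=a+b\in S$ gives $x\in{\rm P}(S)\subseteq{\rm P}(R)$, as claimed. The argument is mostly bookkeeping; the one point deserving care is that the relation defining a periodic element, though not a priori a \emph{minimal} polynomial relation, is already monic (one rewrites $a^k=a^l$ as $a^k-a^l=0$), which is exactly what makes the reduction of higher powers work and $R_0[a]$ module-finite over $R_0$. Alternatively, one may phrase the middle step as saying that $a$ and $b$ are integral over $R_0$ and invoke \cite[Proposition~5.1 and Corollary~5.3]{Atiyah} together with the finiteness of $R_0$, in the spirit of the proof of Lemma~\ref{center1}.
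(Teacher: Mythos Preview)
Your argument is correct: passing to the commutative subring $S=R_0[a,b]$ with $R_0\cong\Z/m\Z$, using the periodicity relations to see that $S$ is module-finite over the finite ring $R_0$, and concluding that $S$ is finite (hence every element of $S$, in particular $x=a+b$, is periodic) is a clean and complete proof.

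As for comparison: the paper does not supply its own proof of this lemma but simply cites \cite[Lemma~2.6]{Bou}. Your proof is therefore a self-contained substitute. The alternative phrasing you mention at the end---that $a$ and $b$ are integral over $R_0$, so $R_0[a,b]$ is module-finite over $R_0$ by \cite[Proposition~5.1 and Corollary~5.3]{Atiyah}---is exactly the style used in the paper's proof of Lemma~\ref{center1}, and is equivalent to what you wrote out by hand. Either formulation is fine; your explicit reduction of powers has the virtue of avoiding any appeal to general integrality machinery.
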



The two lemmas above lead to the following surprising result.

\begin{proposition}\label{strongly}
A ring $R$ is strongly semi-nil clean if and only if $R$ is periodic.
\end{proposition}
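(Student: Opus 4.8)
The plan is to prove the nontrivial direction: if $R$ is strongly semi-nil clean, then $R$ is periodic. (The converse is immediate, since if $x$ is periodic we may take $a = x$ and $b = 0$, which trivially commute.) The overall strategy is to first secure that $R$ has positive characteristic, and then to show that every element of $R$ is actually periodic by an induction-on-nilpotency argument applied to the nilpotent part.

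First I would apply Lemma~\ref{center1}: since every element of a strongly semi-nil clean ring is in particular a sum of two elements of ${\rm P}(R) \cup \mathfrak{u}(R)$ (indeed a sum of an element of ${\rm P}(R)$ and a nilpotent, and nilpotents plus $1$ are unipotent, so one reduces to that setting), we conclude $\ch(R) = m > 0$. Now fix $x \in R$ and write $x = a + b$ with $a \in {\rm P}(R)$, $b \in {\rm Nil}(R)$, and $ab = ba$. The goal is to deduce $x \in {\rm P}(R)$. The natural approach is induction on the nilpotency index $k$ of $b$, i.e. the least $k$ with $b^k = 0$. The base case $k = 1$ gives $b = 0$ and $x = a \in {\rm P}(R)$ trivially.

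For the inductive step, since $a$ is periodic of positive characteristic, there is $n$ with $a^n$ idempotent (a standard fact: some power of a periodic element in a ring is idempotent, or one uses that $a^m = a^\ell$ for $m > \ell$ and passes to a suitable power). Writing $e = a^n$, we have $e$ central-ish relative to $a$ and $b$ in the commutative subring $\Z[a,b]$, and $R$ decomposes compatibly. Actually the cleanest route is: work inside the commutative subring $S = \Z[a,b]$ of $R$, which has positive characteristic and in which $b$ is nilpotent and $a$ is periodic. In a commutative ring, a periodic element $a$ satisfies $a^n = a^{2n}$ for some $n$ (so $a^n$ is an idempotent $e$), and then $a = ea + (1-e)a$ where $ea$ is a unit in $eS$ that is periodic, hence a root of unity times... — here I would instead invoke Lemma~\ref{center0}: it states precisely that in a ring of positive characteristic, a sum of two \emph{commuting periodic} elements is periodic. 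So it suffices to show $b$ itself is periodic. But a nilpotent element $b$ in a ring of characteristic $m > 0$ \emph{is} periodic: if $b^k = 0$ then in $\Z/m$-algebra generated by $b$, the element $1 + b$ is unipotent hence (by \cite[Theorem~3.2]{Lam-Danchev}, as used in Lemma~\ref{center1}) satisfies $(1+b)^{m^s} = 1$, so $b = (1+b) - 1$ with both summands periodic and commuting; alternatively and more directly, $b^k = 0 = b^{2k}$ shows $b$ is periodic by definition. Hence $b \in {\rm P}(R)$, and then $x = a + b$ is a sum of two commuting periodic elements in a ring of positive characteristic, so $x \in {\rm P}(R)$ by Lemma~\ref{center0}. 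Since $x$ was arbitrary, $R$ is periodic.

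The main obstacle, and the only place requiring care, is the very first step: extracting positive characteristic cleanly from the hypothesis, and making sure the reduction "semi-nil clean $\Rightarrow$ each element is a sum of two elements of ${\rm P}(R) \cup \mathfrak{u}(R)$" is correctly invoked so that Lemma~\ref{center1} applies. After that, the argument is essentially a one-line application of Lemma~\ref{center0} once one observes the trivial but easy-to-overlook fact that every nilpotent element is periodic (since $b^k = 0$ implies $b^k = b^{k+1}$, or even $b^k = b^{2k}$). I do not anticipate needing the induction at all once Lemma~\ref{center0} is in hand; the induction sketch above is a fallback in case one wants to avoid citing Lemma~\ref{center0}, but citing it is clearly the intended and shortest path.
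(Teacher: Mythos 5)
Your argument is correct and is exactly the route the paper intends: the paper offers no written proof beyond the remark that Lemmas~\ref{center1} and~\ref{center0} yield the result, and you supply precisely the missing glue --- positive characteristic from Lemma~\ref{center1}, the observation that a nilpotent $b$ is itself periodic (as $b^k=0=b^{2k}$), and then Lemma~\ref{center0} applied to the commuting sum $a+b$. The induction-on-nilpotency fallback you sketch is unnecessary, as you yourself note.
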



Furthermore, from Lemmas~\ref{center1} and \ref{center0} combined with a result of Herstein in \cite{Herstein} (see also \cite[Theorem~1.3]{Bell}), we deduce the following assertion.

\begin{proposition}\label{center}
Let $R$ be a semi-nil clean ring. The following two items hold:
\begin{itemize}
\item [(1)] Each central subring of $R$ is a periodic ring.
\item[(2)] If ${\rm Nil}(R)\subseteq {\rm Z}(R)$, then $R$ is a commutative periodic ring.
\end{itemize}
\end{proposition}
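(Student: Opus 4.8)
The plan is to prove both items essentially in tandem, deriving (2) as a consequence of (1) together with Herstein's commutativity theorem. For item (1), let $S$ be a central subring of $R$. Since $S$ is central, every element of $S$ commutes with everything in $R$, so in particular when we write an element $s \in S$ as $s = a + b$ with $a \in {\rm P}(R)$ and $b \in {\rm Nil}(R)$, the element $s$ is central, which gives $ab = ba$ (indeed $b = s - a$ and $a$ commutes with $s$, so $a$ and $b$ commute). By Lemma~\ref{center1} applied to $R$ (which is semi-nil clean, hence each element is a sum of two elements of ${\rm P}(R) \cup \mathfrak{u}(R)$), we know $\ch(R) > 0$, so $\ch(S) > 0$ as well. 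Now Lemma~\ref{center0}, applied with $a$ and the periodic element $b$... wait — here $b$ is nilpotent, not obviously periodic. Since $\ch(R) = m > 0$, a nilpotent element $b$ satisfies $b^k = 0$ for some $k$, and in characteristic $m$ one has $b$ periodic precisely when... actually a nilpotent element in a ring of prime-power characteristic $p^t$ satisfies $b^{p^{ts}} = 0$; more to the point, in characteristic $m > 0$ every nilpotent element $b$ satisfies $(b)$ generating a finite (hence periodic) subring, so $b \in {\rm P}(R)$ is false in general — instead I should argue directly that $s = a + b$ with $a, b$ commuting and both lying in a suitable periodic context.

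Let me restructure: the cleanest route is to observe that since $\ch(R) = m > 0$, for the nilpotent $b$ we have that $\Z/m\Z[b]$ is a finite ring, so $b$ is periodic, i.e., ${\rm Nil}(R) \subseteq {\rm P}(R)$ once $\ch(R) > 0$. Wait, that is genuinely true: if $m\cdot 1 = 0$ and $b^k = 0$, then the subring generated by $b$ over $\Z/m\Z$ is a finitely generated module over a finite ring, hence finite, hence every element including $b$ is periodic. So in a semi-nil clean ring, ${\rm Nil}(R) \subseteq {\rm P}(R)$, and therefore writing $s = a + b$ for $s \in S$ expresses $s$ as a sum of two commuting periodic elements of $R$; by Lemma~\ref{center0}, $s \in {\rm P}(R)$. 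Since this holds for all $s \in S$ and $S$ is closed under the ring operations, $S$ is a periodic ring. This handles (1), and in particular ${\rm Z}(R)$ itself is periodic.

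For item (2), suppose ${\rm Nil}(R) \subseteq {\rm Z}(R)$. First, I claim this forces $R$ to be commutative: this is exactly Herstein's theorem (or \cite[Theorem~1.3]{Bell}) once we know the setup — a ring in which nilpotents are central and which has the right torsion/periodicity behaviour is commutative. More precisely, since $R$ is semi-nil clean with ${\rm Nil}(R) \subseteq {\rm Z}(R)$, every element $x = a + b$ is a sum of a periodic element $a$ and a central nilpotent $b$; working modulo ${\rm Nil}(R)$ (which is now a central ideal, the image being reduced), $\bar R = R/{\rm Nil}(R)$ is a reduced semi-nil clean ring, hence periodic, hence (by Herstein, since periodic rings with central idempotents... or directly since a reduced periodic ring is commutative — each element satisfies $x^n = x$ type relations) commutative; then $R$ itself is commutative because the commutator $[x,y]$ is nilpotent (standard: $[x,y] \in {\rm Nil}(R) \subseteq {\rm Z}(R)$ forces $[x,y] = 0$ by a short computation, e.g. $[x,y]^2 = [x, y[x,y]] - [x,[x,y]]y = 0$ using centrality). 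Once $R$ is commutative, it is certainly semi-nil clean, and by (1) applied with $S = R$ (a commutative ring is its own central subring), $R$ is periodic.

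The main obstacle I anticipate is the commutativity step in (2) — namely invoking Herstein's result correctly with the right hypotheses. The lifting from ``$R/{\rm Nil}(R)$ commutative'' to ``$R$ commutative'' when nilpotents are central is elementary, and the reduction to a periodic (hence commutative, via the classical fact that a reduced periodic ring embeds in a product of fields or directly satisfies an $x^n = x$ identity locally) quotient is routine; the one point requiring care is whether ``semi-nil clean $+$ reduced'' really gives ``periodic,'' which follows since in a reduced ring ${\rm Nil}(R) = 0$, so semi-nil clean collapses to $R = {\rm P}(R)$ outright. The rest is bookkeeping with the earlier lemmas.
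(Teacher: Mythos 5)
Your proof of item (1) is correct and is essentially the paper's intended argument: Lemma~\ref{center1} gives $\ch(R)>0$, the centrality of $s$ forces $ab=ba$ in the decomposition $s=a+b$, and Lemma~\ref{center0} then yields $s\in{\rm P}(R)$. One remark: your detour about whether a nilpotent is periodic is unnecessary (and your first attempted justification via finite subrings, while salvageable, is beside the point) --- with the paper's definition of periodicity, any $b$ with $b^k=0$ satisfies $b^k=b^{k+1}$ with $k\neq k+1$, so ${\rm Nil}(R)\subseteq{\rm P}(R)$ in \emph{every} ring, with no hypothesis on the characteristic.

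For item (2), your overall route (pass to the reduced quotient $R/{\rm Nil}(R)$, which is periodic and hence potent and commutative by Jacobson, then lift commutativity) is reasonable, and your fallback of citing Herstein's note or \cite[Theorem~1.3]{Bell} for the commutativity is exactly what the paper does. However, the ``short computation'' you offer for the lifting step is wrong. Writing $c=[x,y]$, the Leibniz rule gives $[x,y[x,y]]=[x,y]c+y[x,c]=c^2+0$, so your displayed identity $[x,y]^2=[x,y[x,y]]-[x,[x,y]]y$ evaluates to the tautology $c^2=c^2$ and proves nothing. More importantly, the claim ``a commutator that is central and nilpotent must vanish'' is false in general: in the subring $\mathbb{F}_2\cdot 1+\mathbb{F}_2E_{12}+\mathbb{F}_2E_{13}+\mathbb{F}_2E_{23}$ of ${\rm T}_3(\mathbb{F}_2)$, the element $[E_{12},E_{23}]=E_{13}$ is a nonzero central nilpotent commutator. (That ring does not satisfy ${\rm Nil}(R)\subseteq{\rm Z}(R)$, so it is no counterexample to the proposition, but it shows your proposed one-line lift cannot work: the lift genuinely needs the full hypothesis that \emph{all} nilpotents are central together with the periodicity structure, which is precisely the content of Herstein's theorem.) So keep the citation and delete the ``elementary'' argument; as written, that step is a genuine gap.
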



Recall that a ring $R$ is {\it weakly $2$-primal} if ${\rm Nil}(R)$ is equal to the Levitzki radical of $R$. For example, every commutative or reduced ring is weakly 2-primal. Recall also that an {\it NI ring} is a ring $R$ such that ${\rm Nil}(R)$ forms an ideal of $R$.

\medskip

The next result is one of our key rules in the paper.

\begin{theorem}\label{NI}
If $R$ is a semi-nil clean ring and ${\rm Nil}(R)$ is an additive subgroup of $R$, then ${\rm J}(R)$ is nil. Particularly, $R$ is a semi-nil clean NI ring if and only if ${\rm J}(R)={\rm Nil}(R)$ and $R$ is a periodic ring.
\end{theorem}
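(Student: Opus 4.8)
The plan is to establish the displayed sufficient condition first --- that ${\rm J}(R)$ is nil --- by trapping a suitable idempotent inside ${\rm J}(R)$, and then to bootstrap the biconditional from it, using two elementary facts (nil ideals lie inside ${\rm J}(R)$, and reduced semi-nil clean rings are periodic) together with a finiteness argument in positive characteristic.

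For the first assertion, fix $x\in{\rm J}(R)$ and write $x=a+b$ with $a\in{\rm P}(R)$ and $b\in{\rm Nil}(R)$. Since $a$ is periodic, its powers are eventually periodic, so some $a^{N_0}$ ($N_0\ge 1$) is an idempotent $e$, and then $a^{jN_0}=e$ for all $j\ge 1$. Choosing $j$ with $N:=jN_0$ at least the nilpotency index of $b$, we have $a^{N}=e$ and $b^{N}=0$. Expanding $e=a^{N}=(x-b)^{N}$ as a sum of length-$N$ words in the letters $x$ and $-b$, every word containing at least one $x$ lies in the two-sided ideal ${\rm J}(R)$, while the unique word with no $x$ is $(-b)^{N}=0$. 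Hence $e\in{\rm J}(R)$, and since $1-e\in{\rm U}(R)$ and $e(1-e)=0$ we get $e=0$, i.e. $a$ is nilpotent. Then $x=a+b$ is a sum of two nilpotent elements, so $x\in{\rm Nil}(R)$ because ${\rm Nil}(R)$ is an additive subgroup. Thus ${\rm J}(R)\subseteq{\rm Nil}(R)$. The only delicate point here is the bookkeeping of choosing a single exponent $N$ that makes $a^{N}$ idempotent and $b^{N}=0$ at once; there is no genuine obstacle.

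For the biconditional, the reverse implication is immediate: if ${\rm J}(R)={\rm Nil}(R)$ then ${\rm Nil}(R)$ is an ideal, so $R$ is NI, and a periodic ring is semi-nil clean via $x=x+0$. For the forward implication, let $R$ be semi-nil clean and NI. Then ${\rm Nil}(R)$ is a (nil) ideal, hence an additive subgroup, so the first part gives ${\rm J}(R)\subseteq{\rm Nil}(R)$; the reverse inclusion holds because a nil ideal is quasi-regular, so ${\rm J}(R)={\rm Nil}(R)$. To see that $R$ is periodic, note first that by Lemma~\ref{center1} the characteristic $m$ of $R$ is positive (every element of a semi-nil clean ring is a sum of two elements of ${\rm P}(R)$, since nilpotents are periodic). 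Fix $x\in R$ and write $x=a+b$ as above. The ring $R/{\rm Nil}(R)$ is reduced, so there $\overline{b}=0$ and $\overline{x}=\overline{a}$ is periodic; hence $x^{s}-x^{t}\in{\rm Nil}(R)$ for some $s>t\ge 1$, say $c:=x^{s}-x^{t}$ with $c^{k}=0$. Now $(\Z/m\Z)[c]$ is a homomorphic image of $(\Z/m\Z)[T]/(T^{k})$, hence finite, and the relation $x^{s}=x^{t}+c$ lets one rewrite every power $x^{n}$ (induction on $n$) as a $(\Z/m\Z)[c]$-combination of $1,x,\dots,x^{s-1}$. Therefore the subring $(\Z/m\Z)[x]$ is a finitely generated module over a finite ring, hence finite, so $x$ is periodic; as $x$ was arbitrary, $R$ is periodic.

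The crux is this last implication, semi-nil clean NI $\Rightarrow$ periodic: reducing modulo ${\rm Nil}(R)$ only yields that each difference $x^{s}-x^{t}$ is nilpotent, and turning this back into genuine periodicity of $x$ is exactly where positivity of $\ch(R)$ (via Lemma~\ref{center1}) is essential, through the finiteness of $(\Z/m\Z)[x]$. I expect this step, rather than the proof that ${\rm J}(R)$ is nil, to be the one demanding the most care.
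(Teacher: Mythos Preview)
Your argument is correct. The first part---showing ${\rm J}(R)$ is nil by trapping the idempotent power of $a$ inside ${\rm J}(R)$---is essentially the paper's approach; you are simply more explicit about the expansion $a^{N}=(x-b)^{N}$, whereas the paper asserts $a^{n}\in{\rm J}(R)$ directly and then juggles exponents via $a^{k}=(a^{k})^{2^{n}}=(a^{k})^{n}(a^{k})^{2^{n}-n}$ to land the idempotent in ${\rm J}(R)$.

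The genuine divergence is in the periodicity step. The paper shows, for a suitable $k$, that $y^{k}-y^{2k}\in{\rm Nil}(R)$, then invokes an external lifting lemma (\cite[Lemma~3.5]{YKZ}) to write $y^{k}=e+c$ with $e$ idempotent, $c$ nilpotent, and $ec=ce$, and finally appeals to a characterization of periodic rings (\cite[Theorem~3.4]{Cui}) to conclude. Your route is more self-contained: you pass to $R/{\rm Nil}(R)$ to obtain $x^{s}-x^{t}=c$ nilpotent, observe that $c$ commutes with $x$, and then exploit ${\rm char}(R)=m>0$ (from Lemma~\ref{center1}) to see that $(\Z/m\Z)[x]$ is a finitely generated module over the finite ring $(\Z/m\Z)[c]$, hence finite. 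This avoids both external citations at the cost of the small finiteness computation; the paper's version, by contrast, packages the ``$x^{s}-x^{t}$ nilpotent $\Rightarrow$ $x$ periodic'' step into known results and does not need to invoke positive characteristic explicitly at that point. Both arguments ultimately rest on the same phenomenon, but yours makes the role of ${\rm char}(R)>0$ transparent.
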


\begin{proof}
Let $x\in {\rm J}(R)$. Then, we can find a periodic element $a\in R$ such that $b:=x-a\in {\rm Nil}(R)$. Assume $b^n=0$ for some positive integer $n$. Thus, $a^n\in {\rm J}(R)$. On the other hand, according to \cite[Lemma~5.2]{Ye}, there exists a positive integer $k$ such that $a^k$ is an idempotent. Now, one can write that
$$a^k=\left(a^k\right)^{2^n}=\left(a^k\right)^n \left(a^k\right)^{2^n-n}\in {\rm J}(R).$$
It, therefore, follows that $a^k=0$. Hence, $a$, and so $x=a+b$, lies in ${\rm Nil}(R)$. This proves that ${\rm J}(R)$ is nil. Particularly, if $R$ is an NI ring, then ${\rm J}(R)={\rm Nil}(R)$, as expected.

In order to establish that $R$ is periodic, let $y$ be any element in $R$. Write $y=a+b$, where $a\in {\rm P}(R)$ and $b\in {\rm Nil}(R)$. As above, there exists a natural number $k$ such that $a^k$ is an idempotent. Now, one may write that
$$y^k-y^{2k}=(a+b)^k-(a+b)^{2k}\in {\rm Nil}(R).$$
Consequently, thanks to \cite[Lemma~3.5]{YKZ}, there exists an idempotent $e$ and a nilpotent $c$, both in $R$, such that $y^k=e+c$ and $ec=ce$ (i.e., $y^k$ is a strongly nil clean element). That is why, the result follows at once from \cite[Theorem~3.4]{Cui}, completing the arguments.
\end{proof}


As a  consequence of Theorem~\ref{NI}, we have the following generalization of \cite[Theorem~10.4.5]{Sheibani} or \cite[Corollary]{Bell2}.

\begin{corollary}
Let $R$ be a semi-nil clean ring. If, for each $x,y\in {\rm Nil}(R)$, we have $xy=yx$ (that is, ${\rm Nil}(R)$ is commutative), then $R$ is periodic.
\end{corollary}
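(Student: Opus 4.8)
The plan is to deduce the corollary directly from Theorem~\ref{NI}: since that theorem already yields periodicity once $R$ is known to be a semi-nil clean NI ring, the entire task reduces to checking that the commutativity of $\mathrm{Nil}(R)$ forces $\mathrm{Nil}(R)$ to be a two-sided ideal of $R$.

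First I would dispose of the easy half. If $a,b\in\mathrm{Nil}(R)$ with $a^m=0=b^n$, then from $ab=ba$ the binomial theorem gives
$$(a+b)^{m+n}=\sum_{k=0}^{m+n}\binom{m+n}{k}a^k b^{\,m+n-k}=0,$$
because in each summand either $k\ge m$ or $m+n-k\ge n$. Together with $-a\in\mathrm{Nil}(R)$ and $0\in\mathrm{Nil}(R)$, this shows $\mathrm{Nil}(R)$ is an additive subgroup of $R$; being moreover closed under the (commutative) multiplication, it is in fact a commutative subrng. At this stage Theorem~\ref{NI} already applies and tells us that $\mathrm{J}(R)$ is nil.

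The main obstacle is to upgrade ``additive subgroup'' to ``ideal'', i.e.\ to prove $ar\in\mathrm{Nil}(R)$ for every $a\in\mathrm{Nil}(R)$ and $r\in R$ (equivalently $ra\in\mathrm{Nil}(R)$, since $ar$ and $ra$ are simultaneously nilpotent). One convenient route is to pass to the semiprime quotient $R/\mathcal P$, where $\mathcal P$ is the prime radical (a nil ideal, so that $\mathrm{Nil}(R/\mathcal P)$ consists exactly of the images of the elements of $\mathrm{Nil}(R)$, and these still commute pairwise), and then to argue that \emph{a semiprime ring whose nilpotent elements commute pairwise must be reduced}. Concretely one would take a nonzero element with $a^2=0$, use the identities $a\cdot ar=0$ and $(ara)^2=0$ (so $aRa\subseteq\mathrm{Nil}(R)$ with zero multiplication), and, from a hypothetical $r$ with $ar$ non-nilpotent, produce two nilpotent elements that fail to commute --- in a primitive homomorphic image the Jacobson Density Theorem exhibits a $2\times2$ elementary-matrix configuration, hence non-commuting square-zero operators, and the delicate point is to realize these by genuine nilpotent elements of $R$. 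This forces $\mathrm{Nil}(R)=\mathcal P$, an ideal of $R$. (Alternatively, this is exactly the content of the classical results of Herstein and Bell on rings with commuting nilpotents quoted in the Introduction, which may simply be invoked.) With $\mathrm{Nil}(R)$ now an ideal, $R$ is a semi-nil clean NI ring, and Theorem~\ref{NI} gives at once that $\mathrm{J}(R)=\mathrm{Nil}(R)$ and that $R$ is periodic, recovering in particular \cite[Theorem~10.4.5]{Sheibani} and \cite[Corollary]{Bell2}.
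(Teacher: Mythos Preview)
Your overall strategy --- reduce to Theorem~\ref{NI} by proving that $\mathrm{N}:=\mathrm{Nil}(R)$ is a two-sided ideal --- coincides with the paper's. The additive-closure step is fine. The divergence, and the gap, is in how you try to upgrade ``subgroup'' to ``ideal''.

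Your Route~1 (pass to $R/\mathcal P$ and claim that a semiprime ring with pairwise commuting nilpotents is reduced, via primitive quotients and Jacobson density) does not go through as written. The hypothesis ``nilpotents commute'' is preserved when factoring by a \emph{nil} ideal, but it is \emph{not} preserved under passage to an arbitrary prime or primitive quotient $R/P$: a nilpotent of $R/P$ need not lift to a nilpotent of $R$, so the $2\times 2$ elementary configuration produced by density gives non-commuting nilpotents only in the quotient, not in $R$. You flag this yourself as ``the delicate point'', but it is not a detail --- it is the whole content of the step. Your Route~2 (``simply invoke the classical results of Herstein and Bell quoted in the Introduction'') is also off target: the Herstein result cited in the paper concerns \emph{central} nilpotents, and the Bell--Tominaga result is stated for weakly periodic rings; neither furnishes the bare implication ``commuting nilpotents $\Rightarrow$ $\mathrm{N}$ is an ideal'' in a general ring.

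The paper closes this gap differently, and in a way that genuinely uses the semi-nil clean hypothesis a second time. It extracts from \cite{Sheibani,Bell2} two purely ring-theoretic facts valid whenever $\mathrm{N}$ is commutative (resp.\ multiplicatively closed): (i) $e\,\mathrm{N}\subseteq\mathrm{N}$ for every idempotent $e$; (ii) if $x^{k}\mathrm{N}\subseteq\mathrm{N}$ for some $k$, then $x\,\mathrm{N}\subseteq\mathrm{N}$. Then, for an arbitrary $r\in R$, the semi-nil clean decomposition $r=a+b$ with $a$ periodic and $b\in\mathrm{N}$ is invoked: some power $a^{k}$ is idempotent, so (i) gives $a^{k}\mathrm{N}\subseteq\mathrm{N}$, then (ii) gives $a\,\mathrm{N}\subseteq\mathrm{N}$, hence $r\,\mathrm{N}\subseteq\mathrm{N}$. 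This is exactly where the argument needs more than the commutativity of $\mathrm{N}$ alone, and it is the piece your proposal is missing.
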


\begin{proof}
Exploiting Theorem~\ref{NI}, it suffices to show that ${\rm N:= Nil}(R)$ is an ideal. The following two claims can be extracted from the proof of either \cite[Theorem~10.4.5]{Sheibani} or \cite[Theorem~2]{Bell2}.

\begin{itemize}
\item [(i)] {\it Let $R$ be a ring such that ${\rm N}$ is commutative. If $e\in R$ is an idempotent, then $e{\rm N}\subseteq {\rm N}$.}
\item [(ii)] {\it Let $R$ be a ring such that ${\rm N}$ is multiplicatively closed. If there exist some $x\in R$ and some natural number $k$ such that $x^k{\rm N}\subseteq{\rm N}$, then $x{\rm N}\subseteq {\rm N}$.}
\end{itemize}

Now, let $r\in R$ and write $r=a+b$, where $a\in {\rm P}(R)$ and $b\in {\rm N}$. We know with \cite[Lemma~5.2]{Ye} at hand that $a^k$ is idempotent for some natural number $k$. Thus, $a^k{\rm N}\subseteq {\rm N}$ by~{(i)}, and so $a{\rm N}\subseteq {\rm N}$ by~{(ii)}. Therefore, $r{\rm N}\subseteq {\rm N}$. Similarly, ${\rm N}r\subseteq {\rm N}$, showing that ${\rm N}$ is an ideal (two-sided), as asked for.
\end{proof}


If $R$ is a local ring, then ${\rm Nil}(R)\subseteq J(R)$. If $R$ is a weakly periodic ring, then $J(R)$ is a nil-ideal \cite[Lemma~10.4.1]{Sheibani}. Thus, as an another consequence of Theorem~\ref{NI}, we have the next result.

\begin{corollary}\label{}
Every weakly periodic local  ring is periodic.
\end{corollary}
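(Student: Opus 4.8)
The plan is to reduce the statement to a direct application of Theorem~\ref{NI}, using the two facts recalled just above the corollary. First I would note that since $R$ is local we have ${\rm Nil}(R)\subseteq {\rm J}(R)$, and since $R$ is weakly periodic the ideal ${\rm J}(R)$ is nil by \cite[Lemma~10.4.1]{Sheibani}. The latter forces ${\rm J}(R)\subseteq{\rm Nil}(R)$, and so ${\rm J}(R)={\rm Nil}(R)$.

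From this equality two things follow immediately: ${\rm Nil}(R)$ is a (two-sided) ideal, so $R$ is an NI ring, and in particular ${\rm Nil}(R)$ is an additive subgroup of $R$. Also, as observed in the introduction, every weakly periodic ring is semi-nil clean, hence $R$ is a semi-nil clean NI ring. At this point the ``particularly'' part of Theorem~\ref{NI} applies verbatim and yields that $R$ is periodic, which is the desired conclusion.

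There is no real obstacle here, since the substantive work has already been carried out in Theorem~\ref{NI} and in the two cited lemmas; the one point worth being careful about is not to stop at the conclusion ``${\rm J}(R)$ is nil'' — that statement alone does not give periodicity — but to genuinely verify that the hypotheses ``semi-nil clean'' and ``NI'' of Theorem~\ref{NI} both hold for $R$, so that its second assertion can be invoked.
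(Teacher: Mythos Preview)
Your proof is correct and follows essentially the same route as the paper: the paper's argument, given in the paragraph immediately preceding the corollary, observes that locality gives ${\rm Nil}(R)\subseteq {\rm J}(R)$, weak periodicity makes ${\rm J}(R)$ nil via \cite[Lemma~10.4.1]{Sheibani}, hence ${\rm J}(R)={\rm Nil}(R)$ and $R$ is NI, so Theorem~\ref{NI} applies. Your write-up is in fact slightly more explicit than the paper's, spelling out that weakly periodic $\Rightarrow$ semi-nil clean and that one needs the ``particularly'' clause of Theorem~\ref{NI}.
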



We now need to know an easy but important fact.

\begin{lemma}\label{lift}
Suppose that $R$ and $S$ are two rings, and let $I$ be a nil-ideal of $R$. Then, the following two points are true:
\begin{itemize}
\item [(1)] $R$ is semi-nil clean (resp., periodic) if and only if $R/I$ is semi-nil clean (resp., periodic).
\item [(2)] $R$ and $S$ are semi-nil clean (resp., periodic) if and only if $R\times S$ is semi-nil clean (resp., periodic).
\end{itemize}
\end{lemma}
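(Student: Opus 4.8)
The plan is to prove both equivalences by working modulo the nil-ideal and using the standard fact that nilpotent elements, idempotents, and their compatibility conditions all lift along nil-ideals, together with the already-established behaviour of periodic elements (via \cite[Lemma~5.2]{Ye}, which says every periodic element has an idempotent power).

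\begin{proof}
(1) Suppose first that $R$ is semi-nil clean. Given $\bar x \in R/I$, lift it to $x \in R$ and write $x = a + b$ with $a \in {\rm P}(R)$ and $b \in {\rm Nil}(R)$. Then $\bar x = \bar a + \bar b$; clearly $\bar b \in {\rm Nil}(R/I)$, and $\bar a$ is periodic because $a^m = a^n$ forces $\bar a^{\,m} = \bar a^{\,n}$. Hence $R/I$ is semi-nil clean. Conversely, assume $R/I$ is semi-nil clean and take $x \in R$. Then $\bar x = \bar a + \bar b$ with $\bar a \in {\rm P}(R/I)$ and $\bar b \in {\rm Nil}(R/I)$. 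Lift $\bar a$ to $a \in R$; since $\bar a$ is periodic it has an idempotent power $\bar a^{\,k}$ in $R/I$, and because idempotents lift modulo the nil-ideal $I$ we may, after replacing $a$ by a suitable element of $a + I$ (using that $I$ is nil so that $a^k + I$ being idempotent lifts to an idempotent), arrange that $a$ is itself periodic in $R$ — more directly, one observes $a^k$ is idempotent modulo the nil-ideal $I$, hence $a^k - a^{2k} \in I$ is nilpotent, so $a^{kt} = a^{2kt}$ for a suitable $t$, i.e.\ $a \in {\rm P}(R)$. Now set $b := x - a$. Then $\bar b = \bar x - \bar a \in {\rm Nil}(R/I)$, so $b^{\ell} \in I$ for some $\ell$, and since $I$ is nil, $b \in {\rm Nil}(R)$. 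Thus $x = a + b$ exhibits $x$ as semi-nil clean, and $R$ is semi-nil clean. The periodic case is immediate: if $R$ is periodic so is every homomorphic image, and if $R/I$ is periodic then for $x \in R$ the relation $\bar x^{\,m} = \bar x^{\,n}$ gives $x^m - x^n \in I$ nilpotent, whence $x^{mt} = x^{nt}$ for suitable $t$, so $x \in {\rm P}(R)$.

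(2) This reduces to (1) once one identifies the relevant nil-ideals. Write $\pi_R : R \times S \to R$ and $\pi_S : R \times S \to S$ for the projections. If $R$ and $S$ are semi-nil clean, then an element $(x,y) \in R \times S$ can be written as $(a_1,a_2) + (b_1,b_2)$ where $x = a_1 + b_1$ and $y = a_2 + b_2$ are semi-nil clean decompositions; since ${\rm Nil}(R \times S) = {\rm Nil}(R) \times {\rm Nil}(S)$ and $(a_1,a_2)$ is periodic whenever both coordinates are (take a common exponent), $R \times S$ is semi-nil clean. Conversely, $R \cong (R \times S)/(0 \times S)$ and $S \cong (R \times S)/(R \times 0)$; but $0 \times S$ and $R \times 0$ need not be nil, so one cannot simply invoke (1). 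Instead, if $R \times S$ is semi-nil clean, given $x \in R$ pick any $y \in S$, decompose $(x,y) = (a_1,a_2) + (b_1,b_2)$ with the first summand periodic and the second nilpotent in $R \times S$; then $a_1 \in {\rm P}(R)$ and $b_1 \in {\rm Nil}(R)$, so $x = a_1 + b_1$ is semi-nil clean, and symmetrically for $S$. The periodic case is handled the same way, using that a product is periodic iff each factor is.
\end{proof}

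I expect the only genuinely delicate point to be the lifting step in the ``only if'' direction of (1): turning a periodic element of $R/I$ into a periodic element of $R$. The clean way to do this is to avoid lifting idempotents per se and instead argue directly, as above: if $\bar a$ is periodic in $R/I$, then by \cite[Lemma~5.2]{Ye} some power $\bar a^{\,k}$ is idempotent, so $a^k - a^{2k} \in I$; since $I$ is nil this difference is nilpotent, and an element $e$ with $e - e^2$ nilpotent satisfies $e^N = e^{2N}$ for large $N$ (a routine consequence), giving $a^{kN} = a^{2kN}$, i.e.\ $a$ is periodic in $R$. Everything else is a direct transfer of the nilpotency condition across the nil-ideal $I$, which is automatic because $I$ is nil. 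No new macros are needed and the argument uses only results stated earlier in the excerpt.
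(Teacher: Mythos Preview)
Your argument for part~(2) is fine, but there is a genuine gap in the lifting direction of part~(1). You assert as a ``routine consequence'' that if $e-e^{2}$ is nilpotent then $e^{N}=e^{2N}$ for some $N$, and likewise that $x^{m}-x^{n}$ nilpotent forces $x^{mt}=x^{nt}$ for some $t$. Neither implication holds in general: in $R=\Z[X]/\bigl(X^{2}(1-X)^{2}\bigr)$ the element $e=\bar X$ satisfies $(e-e^{2})^{2}=0$, yet $e$ is not periodic, since $X^{M}(1-X^{N-M})$ has $(1-X)$-adic valuation exactly~$1$ in $\Z[X]$ and hence is never divisible by $X^{2}(1-X)^{2}$. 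So the step you label ``routine'' fails without an extra hypothesis.

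What is missing is precisely the role of Lemma~\ref{center1} in the paper's proof. If $R/I$ is semi-nil clean (or periodic), Lemma~\ref{center1} gives $\ch(R/I)>0$; since $I$ is nil this forces $\ch(R)>0$. Once $\ch(R)=c>0$, your strategy does go through: from $(e-e^{2})^{s}=0$ one sees that $e$ satisfies a monic integer polynomial of degree $2s$, so the subring $\Z_{c}[e]$ is a finitely generated $\Z_{c}$-module, hence finite, and therefore $e$ is periodic (and similarly for $x$). The paper bypasses this computation by citing Khurana's lifting theorem \cite[Theorem~5]{lift} together with Lemma~\ref{center1}, but either way the positive-characteristic step is essential and cannot be omitted.
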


\begin{proof}
The first statement follows directly from Lemma~\ref{center1} and \cite[Theorem~5]{lift}, and the second statement follows easily from a simple observation (see the proof of \cite[Proposition~2.12]{Additively}).
\end{proof}


Now we have the following generalization of \cite[Theorem~4.7]{Bell}.

\begin{theorem}\label{FN}
If $R$ is a semi-nil clean ring with only finitely many non-central nilpotent elements, then $R$ is periodic.
\end{theorem}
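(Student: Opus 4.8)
The plan is to reduce to the case where $\mathrm{Nil}(R)$ is an additive subgroup of $R$, so that Theorem~\ref{NI} applies and forces $R$ to be periodic. If $\mathrm{Nil}(R) \subseteq \mathrm{Z}(R)$, then Proposition~\ref{center}(2) already gives the conclusion, so we may assume there exists at least one non-central nilpotent element, hence (by hypothesis) that the set $\mathrm{Nil}(R) \setminus \mathrm{Z}(R)$ is finite and nonempty. A finiteness argument on this set is the standard Herstein--Bell device: I would show that $R$ must then actually be finite modulo a suitable ideal, or that the additive group generated by nilpotents is small, forcing strong constraints.

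First I would observe that by Proposition~\ref{center}(1), every central subring of $R$ is periodic; in particular $\mathrm{Z}(R)$ is periodic, so $\ch(R) = m > 0$ by Lemma~\ref{center1} (or directly since $1$ is central and periodic). Next, the key step: I would argue that the presence of only finitely many non-central nilpotents forces $\mathrm{Nil}(R)$ itself to be finite. Indeed, if $b$ is a non-central nilpotent and $c$ is any central nilpotent, then $b + c$ is a nilpotent which is non-central (since $b + c - b = c$ is central but $b$ is not), so $b + c$ lies in the finite set $\mathrm{Nil}(R) \setminus \mathrm{Z}(R)$; ranging over a fixed such $b$, this shows the set of central nilpotents is finite, hence $\mathrm{Nil}(R)$ is finite altogether. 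Being a finite subset closed under the relevant operations, one then checks — again via the Herstein/Bell-type commutator manipulations cited in the excerpt (\cite{Herstein}, \cite[Theorem~1.3]{Bell}, \cite{Bell2}) — that $\mathrm{Nil}(R)$ is in fact an ideal: for $x \in R$ and $n \in \mathrm{Nil}(R)$, the elements $xnx^{k}$ and their combinations stay inside the finite nil set, and a pigeonhole/minimality argument over this finite set yields $xn, nx \in \mathrm{Nil}(R)$. Once $\mathrm{Nil}(R)$ is an ideal it is certainly an additive subgroup, so Theorem~\ref{NI} applies and $R$ is periodic.

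The main obstacle I anticipate is the passage from ``$\mathrm{Nil}(R)$ is finite'' to ``$\mathrm{Nil}(R)$ is closed under multiplication and hence an ideal.'' The product of two nilpotents need not be nilpotent in general, so one cannot argue naively; the resolution is to use that $R$ is semi-nil clean together with \cite[Lemma~5.2]{Ye} (giving an idempotent power $a^k$ for each periodic $a$) exactly as in the proof of the preceding Corollary, combined with the finiteness to run the Herstein commutator identities. Concretely, for $b \in \mathrm{Nil}(R)$ and $r \in R$ write $r = a + b'$ with $a \in \mathrm{P}(R)$, $b' \in \mathrm{Nil}(R)$; the idempotent $e = a^k$ satisfies $eb \in \mathrm{Nil}(R)$ because $(eb)^2, (eb)^3, \dots$ together with the action of $e$ on the finite set $\mathrm{Nil}(R)$ is constrained, and then the implication ``$e\,\mathrm{Nil}(R) \subseteq \mathrm{Nil}(R)$ and $b'\,\mathrm{Nil}(R) \subseteq \mathrm{Nil}(R)$ imply $r\,\mathrm{Nil}(R) \subseteq \mathrm{Nil}(R)$'' finishes it, mirroring points (i)--(ii) in the Corollary's proof. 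Symmetry handles the right-hand side, and then Theorem~\ref{NI} closes the argument.
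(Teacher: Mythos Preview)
Your reduction to the case $\mathrm{Nil}(R)\nsubseteq\mathrm{Z}(R)$ and your argument that $\mathrm{Nil}(R)$ is then finite are both correct and match the paper's opening exactly. The gap is in the next step: you want to conclude that $\mathrm{Nil}(R)$ is an ideal (so that Theorem~\ref{NI} applies), but this is simply false in general, and finiteness does not rescue it. Take $R=\mathrm{M}_2(\mathbb{F}_2)$: this ring is finite, hence periodic, hence semi-nil clean, and it has exactly four nilpotent elements; yet $E_{12}+E_{21}$ is a unit, so $\mathrm{Nil}(R)$ is not even additively closed, let alone an ideal. Your proposed workaround inherits the same problem: the facts (i)--(ii) you borrow from the preceding Corollary require, respectively, that $\mathrm{Nil}(R)$ be commutative and that $\mathrm{Nil}(R)$ be multiplicatively closed, neither of which follows from finiteness (again $\mathrm{M}_2(\mathbb{F}_2)$ is a counterexample to both). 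The pigeonhole sketch for $e\cdot\mathrm{Nil}(R)\subseteq\mathrm{Nil}(R)$ is not an argument, since the powers $(eb)^j$ are not known to lie in $\mathrm{Nil}(R)$ to begin with.

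The paper avoids this trap entirely: instead of trying to force $\mathrm{Nil}(R)$ to be an ideal, it passes to $R/\mathcal{P}$ (where $\mathcal{P}$ is the prime radical) and invokes a structure theorem of Klein--Bell \cite[Corollary~5]{Klein} for rings with finitely many nilpotents, giving $R/\mathcal{P}\cong B\oplus C$ with $B$ reduced and $C$ a finite direct sum of full matrix rings over finite fields. The reduced summand $B$ is semi-nil clean with $\mathrm{Nil}(B)=\{0\}$, hence periodic outright; the summand $C$ is finite, hence periodic. Then Lemma~\ref{lift} lifts periodicity back through the nil ideal $\mathcal{P}$. The point is that the matrix summands in $C$ are exactly where your ideal argument breaks down, and the paper handles them by finiteness of $C$ rather than by any NI-type reasoning.
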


\begin{proof}
Taking into account Proposition~\ref{center}, we may assume that ${\rm Nil}(R)\nsubseteq{\rm Z}(R)$. If $x\in {\rm Nil}(R)\setminus {\rm Z}(R)$ and $y\in {\rm Nil}(R)\cap {\rm Z}(R)$, then $x+y\in {\rm Nil}(R)\setminus {\rm Z}(R)$. Thus, ${\rm Nil}(R)\cap {\rm Z}(R)$, and hence ${\rm Nil}(R)$ is finite.

Let $\mathcal{P}$ be the prime radical (or, the lower nil-radical) of $R$. Then, knowing \cite[Corollary~5]{Klein}, one writes that $R/\mathcal{P}\cong B\bigoplus C$, where $B$ is a reduced semi-nil clean ring (whence it is a periodic ring) and $C$ is a finite direct sum of full matrix rings over finite fields. But, since $\mathcal{P}$ is nil and $R/\mathcal{P}$ is periodic, Lemma~\ref{lift} forces that $R$ is periodic, as pursued.
\end{proof}


From the above result, it follows immediately that if $R$ is a semi-nil clean ring with only finitely many non-central zero-divisors, then $R$ is periodic (see \cite[Corollary~10.4.11]{Sheibani}).

\medskip

Recall that a {\it PI ring} is a ring that satisfies a polynomial identity. Now, combining Theorem~\ref{NI}, \cite[Theorems~2.4 and 2.10]{ABD}, \cite[Theorem~2.21]{Bou} and \cite[Theorem~1]{Hirano}, we obtain the following result for matrix rings.

\begin{corollary}\label{matrix1}
Let $R$ be a weakly $2$-primal ring or a PI ring, and let $n$ be a natural number. Then, the following statements are equivalent:
\begin{itemize}
\item[(1)] $R$ is semi-nil clean.
\item[(2)] $R$ is periodic.
\item[(3)] ${\rm M}_n(R)$ is periodic.
\end{itemize}

\end{corollary}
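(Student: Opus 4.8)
The plan is to prove the equivalences by organizing them into the chain $(2)\Rightarrow(1)\Rightarrow(2)$ together with $(2)\Leftrightarrow(3)$, using the cited structural results to handle the two ring classes separately when needed. The implication $(2)\Rightarrow(1)$ is immediate: every periodic ring is weakly periodic by \cite[Theorem~10.1.1]{Sheibani}, hence semi-nil clean, and this direction requires no hypothesis on $R$ at all.

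For $(1)\Rightarrow(2)$, I would split according to the two hypotheses on $R$. If $R$ is weakly $2$-primal, then ${\rm Nil}(R)$ equals the Levitzki radical of $R$ and is in particular an ideal, so $R$ is an NI ring; then Theorem~\ref{NI} applies directly and gives that $R$ is periodic. If instead $R$ is a PI ring, I would invoke \cite[Theorems~2.4 and 2.10]{ABD} (together with the complementary input from \cite[Theorem~2.21]{Bou}) to reduce to the situation where ${\rm Nil}(R)$ is well-behaved: the point is that for a PI ring the nilradical is again an ideal, so one is back in the NI setting and Theorem~\ref{NI} closes the argument. In both cases the upshot of Theorem~\ref{NI} is ${\rm J}(R)={\rm Nil}(R)$ and $R$ periodic, which is exactly statement $(2)$.

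For the equivalence $(2)\Leftrightarrow(3)$, I would use that ${\rm M}_n(R)$ is periodic if and only if $R$ is periodic. The nontrivial direction is $(2)\Rightarrow(3)$: if $R$ is periodic (hence, by the cases above, weakly $2$-primal or PI forces strong structural control), then ${\rm M}_n(R)$ is periodic by \cite[Theorem~1]{Hirano} in the PI case, and by the matrix-ring results of \cite{ABD} and \cite[Theorem~2.21]{Bou} in the weakly $2$-primal case; conversely, if ${\rm M}_n(R)$ is periodic, then so is its subring of scalar matrices, namely $R$. Assembling these pieces yields $(1)\Leftrightarrow(2)\Leftrightarrow(3)$.

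The main obstacle is bookkeeping rather than a single deep step: one must be careful that the cited theorems from \cite{ABD}, \cite{Bou}, and \cite{Hirano} are stated under hypotheses that genuinely cover both the weakly $2$-primal and the PI cases, and that ``periodic'' in those sources matches the definition used here (existence of $m\neq n$ with $x^m=x^n$, equivalently potent modulo nil). In particular, the delicate point is verifying that in the PI case the passage from semi-nil clean to NI is legitimate, so that Theorem~\ref{NI} is applicable; this is where I would expect to lean most heavily on \cite[Theorems~2.4 and 2.10]{ABD}. Once that reduction is secured, everything else is a routine concatenation of implications.
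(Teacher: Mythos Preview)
Your overall architecture is sound and matches the paper's intent: the implication $(2)\Rightarrow(1)$ is trivial, the weakly $2$-primal case of $(1)\Rightarrow(2)$ is correctly handled by observing that such a ring is NI and invoking Theorem~\ref{NI}, and the direction $(3)\Rightarrow(2)$ is indeed just the scalar-matrix embedding.

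However, there is a genuine gap in your treatment of the PI case of $(1)\Rightarrow(2)$. You write that ``for a PI ring the nilradical is again an ideal, so one is back in the NI setting,'' but this is false. The ring ${\rm M}_2(\mathbb{F}_2)$ is a PI ring (every matrix ring over a commutative ring satisfies the standard identity), yet ${\rm Nil}({\rm M}_2(\mathbb{F}_2))$ is not additively closed: the matrices $\left(\begin{smallmatrix}0&1\\0&0\end{smallmatrix}\right)$ and $\left(\begin{smallmatrix}0&0\\1&0\end{smallmatrix}\right)$ are nilpotent while their sum is a unit. So the reduction to Theorem~\ref{NI} collapses, and your acknowledged ``delicate point'' is in fact an obstruction, not a formality. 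The paper does not route the PI case through Theorem~\ref{NI}; rather, the citations to \cite[Theorems~2.4 and 2.10]{ABD} and \cite[Theorem~1]{Hirano} supply an independent argument for PI rings (Hirano's result concerns periodicity of PI rings directly, and the ABD results cover the matrix-ring passage). You should invoke those results as black boxes for the PI case rather than attempting to funnel everything through the NI hypothesis.

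A secondary remark: for $(2)\Rightarrow(3)$ you correctly cite the relevant sources, but be aware that the implication ``$R$ periodic $\Rightarrow {\rm M}_n(R)$ periodic'' is itself nontrivial and is precisely what \cite[Theorem~2.21]{Bou} and the ABD results are doing; it is not a routine consequence of periodicity alone.
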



Another more general version of Theorem~\ref{NI} is as follows.

\begin{proposition}\label{newnil}
Let  $R$ be a semi-nil clean ring. If ${\rm Nil}(R)$ is an additive subgroup of $R$ having bounded index of nilpotence (that is, there exists a positive integer $d$ such that $x^d=0$ for every $x\in {\rm Nil}(R)$), then $R$ is periodic.
\end{proposition}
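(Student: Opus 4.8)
The idea is to reduce to the NI case already handled in Theorem~\ref{NI}. The only missing ingredient is to show that, under the bounded-index hypothesis, $\mathrm{Nil}(R)$ is actually an ideal of $R$; once we know that, $R$ is automatically an NI ring and Theorem~\ref{NI} delivers periodicity immediately. So the entire argument concentrates on upgrading ``$\mathrm{Nil}(R)$ is an additive subgroup of bounded index $d$'' to ``$\mathrm{Nil}(R)$ is a two-sided ideal''.

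The natural route is a theorem of Klein (and related results): a ring in which the nilpotent elements form an additive subgroup of bounded index of nilpotence has its nilpotent elements forming a nil ideal of bounded index (this is precisely the context of \cite[Corollary~5]{Klein}, which was already invoked in the proof of Theorem~\ref{FN}). Thus I would first cite that result to conclude $\mathrm{Nil}(R)$ is an ideal, i.e.\ $R$ is an NI ring. Alternatively, if one prefers a self-contained argument, one can argue along the classical lines: for $a\in\mathrm{Nil}(R)$ and $r\in R$ one shows $ar,ra\in\mathrm{Nil}(R)$ by exploiting that the set of nilpotents is closed under addition together with the bounded index $d$ — e.g.\ using the standard trick that if $x,y$ are nilpotent and $x+y,\ x+ry$ etc.\ are all nilpotent of bounded index, then products like $xy-yx$ and $xr x$ are forced to be nilpotent, and then a Levitzki/Klein-type argument shows $\mathrm{Nil}(R)$ is locally nilpotent, hence an ideal. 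In the write-up I would simply lean on \cite{Klein}, since it is already in the bibliography and used nearby.

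Having established that $R$ is NI, I would invoke Theorem~\ref{NI} directly: a semi-nil clean NI ring satisfies $\mathrm{J}(R)=\mathrm{Nil}(R)$ and is periodic. That finishes the proof.

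\textbf{Main obstacle.} The only real point of substance is the passage from ``additive subgroup of bounded index'' to ``ideal''; everything else is a one-line appeal to Theorem~\ref{NI}. If \cite{Klein} is quotable in the exact form needed, there is essentially nothing to prove; if not, the work is in reproducing Klein's argument that bounded-index nil additive subgroups are ideals, which is a classical but slightly delicate commutator computation. I expect the authors to simply cite \cite{Klein} here, exactly as they did in Theorem~\ref{FN}.
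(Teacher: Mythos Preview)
Your proposed route --- show that $\mathrm{Nil}(R)$ is an ideal and then invoke Theorem~\ref{NI} --- is \emph{not} the paper's argument, and the step you single out as ``the only real point of substance'' is not supported by the reference you give. The result \cite[Corollary~5]{Klein} concerns rings with only \emph{finitely many} nilpotent elements (which is how it is used in Theorem~\ref{FN}); it does not say that an additively closed set of nilpotents of bounded index must be an ideal. Your fallback sketch (``commutator tricks, then a Levitzki/Klein-type argument'') tacitly assumes that $\mathrm{Nil}(R)$ is multiplicatively closed in order to run a local-nilpotence argument, but multiplicative closure is not part of the hypothesis and is not an obvious consequence of additive closure plus bounded index. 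So, as written, the proposal has a genuine gap precisely at the point you yourself flagged as the crux.

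The paper goes a different way and never attempts to prove that $\mathrm{Nil}(R)$ is an ideal. It uses Theorem~\ref{NI} only for its first conclusion, that $\mathrm{J}(R)$ is nil (hence $\mathrm{J}(R)$ satisfies $x^d=0$). It then passes to the semiprimitive ring $R/\mathrm{J}(R)$, writes it as a subdirect product of primitive rings, and uses the Jacobson Density Theorem together with the fact that in $\mathrm{M}_n(D)$ with $n\ge 2$ the nilpotents are not additively closed to force each primitive factor to be a division ring. This makes $R/\mathrm{J}(R)$ commutative, so $R$ satisfies the polynomial identity $(xy-yx)^d=0$, and periodicity follows from Corollary~\ref{matrix1} (the PI case, via Hirano's theorem). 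In short, the paper upgrades the hypothesis to ``$R$ is PI'' rather than to ``$R$ is NI''; if you want to pursue your NI strategy, you would need an independent and correct argument that additive closure plus bounded index forces $\mathrm{Nil}(R)$ to be a two-sided ideal.
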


\begin{proof}
Thanks to Corollary~\ref{matrix1}, it suffices to show that $R$ is a PI ring. But Theorem~\ref{NI} tells us that ${\rm J}(R)$ is nil. Thus, ${\rm J}(R)$ satisfies $x^d=0$ for some $d \in \mathbb{N}$. Furthermore, if we show that $R/{\rm J}(R)$ is commutative, then $R$ will satisfy the identity $(xy - yx)^d = 0$, and so we will be finished.

Next, to achieve that, we without loss of generality may assume that $R$ is semiprimitive. We then claim that $R$ is commutative. To see this, note that $R$ is a subdirect product of primitive rings $\{R_\alpha\}$, where each $R_\alpha$, being a homomorphic image of $R$, is again a semi-nil clean ring and ${\rm Nil}(R_\alpha)$ remains additively closed. If all of the $R_\alpha$'s are commutative, then so is $R$. Thus, we reduce the assumption to the case where $R$ is itself primitive.

Now, by virtue of the prominent Jacobson Density Theorem (see, e.g., \cite{Lam}), there exists a division ring $D$ such that either $R \cong {\rm M}_n(D)$ for some natural number $n$, or there exists a subring $S$ of $R$ such that ${\rm M}_2(D)$ is a homomorphic image of $S$. Since the nilpotent elements of ${\rm M}_n(D)$ are known to be not additively closed for all $n \geq 2$, it follows at once that that $R \cong D$. Therefore,  $R$ is a commutative field, as claimed, thus proving the whole result.
\end{proof}


Let $R, S$ be two rings, and let $M$ be an $(R,S)$-bi-module. We designate the triangular matrix ring
\begin{align*}
\begin{pmatrix}
    R & M \\
    0 & S
\end{pmatrix}= \left\{ \begin{pmatrix}
    r & m\\
    0 & s
\end{pmatrix}\mid r\in R, s\in S, m\in M\right\}
\end{align*}
by ${\rm T}(R, S,M)$. Also, ${\rm T}_n(R)$ denotes the ring of all upper triangular $n\times n$ matrices over $R$. Then, ${\rm T}(R, S,M)$ (resp., ${\rm T}_n(R)$) is periodic if, and only if, both $R$ and $S$ are periodic (see, for more information, \cite[Theorem~2.13]{Bou} or \cite[Corollary~2.15]{ABD}).

\medskip

Now, combining this observation with Theorem~\ref{NI} and \cite[Corollary~2.13]{Bisht}, we readily obtain the following consequence.


\begin{corollary}
Let $R$ and $S$ be  NI rings, $n\geq 2$, and let $M$ be an $(R,S)$-bi-module. Then, ${\rm T}(R, S,M)$ (resp., ${\rm T}_n(R)$) is semi-nil clean if and only if ${\rm T}(R, S,M)$ (resp., ${\rm T}_n(R)$) is periodic, if and only if $R$ and $S$ are both periodic.
\end{corollary}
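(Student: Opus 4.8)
The plan is to assemble the stated equivalences from Theorem~\ref{NI} together with the periodicity criterion for triangular matrix rings recalled just before the statement, after observing that ``periodic $\Rightarrow$ semi-nil clean'' is trivial (in a periodic ring every $x$ equals $x+0$, a periodic element plus a nilpotent one). Writing $T={\rm T}(R,S,M)$, the substance is therefore the chain: $T$ semi-nil clean $\Rightarrow$ $R$ and $S$ periodic $\Rightarrow$ $T$ periodic, and the analogue for ${\rm T}_n(R)$.

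For the first arrow I would use that $R$ and $S$ are homomorphic images of $T$. Indeed $\bigl(\begin{smallmatrix} 0 & M \\ 0 & S\end{smallmatrix}\bigr)$ and $\bigl(\begin{smallmatrix} R & M \\ 0 & 0\end{smallmatrix}\bigr)$ are two-sided ideals of $T$ (a one-line block-multiplication check) with quotients $R$ and $S$ respectively; alternatively, one may simply invoke \cite[Corollary~2.13]{Bisht}, which already records that $T$ is semi-nil clean if and only if $R$ and $S$ are semi-nil clean. Since a surjective ring homomorphism sends periodic elements to periodic elements and nilpotents to nilpotents, any quotient of a semi-nil clean ring is again semi-nil clean; hence $R$ and $S$ are semi-nil clean. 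As $R$ and $S$ are NI by hypothesis, Theorem~\ref{NI} applies and forces $R$ and $S$ to be periodic.

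For the second arrow, once $R$ and $S$ are periodic the quoted observation (\cite[Theorem~2.13]{Bou}, \cite[Corollary~2.15]{ABD}) immediately gives that $T$, as well as ${\rm T}_n(R)$, is periodic, closing the cycle. The ${\rm T}_n(R)$ case is handled identically: $R$ is a homomorphic image of ${\rm T}_n(R)$ (for instance via the $(1,1)$-entry projection), so ${\rm T}_n(R)$ semi-nil clean makes $R$ semi-nil clean, hence periodic by Theorem~\ref{NI}, hence ${\rm T}_n(R)$ periodic by the same criterion; one could equally use the identification ${\rm T}_n(R)\cong{\rm T}\bigl({\rm T}_{n-1}(R),R,R^{\,n-1}\bigr)$ and induct, observing that upper-triangular matrix rings over NI rings are again NI.

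This is really an assembly of pre-existing ingredients, so I do not anticipate a genuine obstacle; the only points that need care are the routine verification that the indicated subsets are ideals of $T$ with the stated quotients, and checking that the cited triangular-ring criterion is stated in enough generality to cover both ${\rm T}(R,S,M)$ and ${\rm T}_n(R)$ (it is). If one prefers to bypass $R$ and $S$ entirely, an alternative route is to compute ${\rm Nil}(T)=\bigl(\begin{smallmatrix} {\rm Nil}(R) & M \\ 0 & {\rm Nil}(S)\end{smallmatrix}\bigr)$ and check directly that it is an ideal whenever $R$ and $S$ are NI, so that $T$ is itself an NI ring and Theorem~\ref{NI} applies to $T$ at once.
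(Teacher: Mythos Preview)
Your proposal is correct and follows essentially the same approach as the paper, which simply cites Theorem~\ref{NI}, \cite[Corollary~2.13]{Bisht}, and the periodicity criterion for triangular rings and declares the result an immediate combination of these. You have merely spelled out the assembly in more detail, including the alternative of showing directly that ${\rm T}(R,S,M)$ is NI.
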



We now show the validity of the next claim, which provides a necessary and sufficient condition for when a ring is semi-nil clean or periodic.

\begin{proposition}\label{indecom}
A ring $R$ is semi-nil clean (resp., periodic) if and only if every indecomposable homomorphic image of $R$ is semi-nil clean (resp., periodic).
\end{proposition}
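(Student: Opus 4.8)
The plan is to prove the statement through a decomposition‐theoretic reduction, using the observation that an ``indecomposable homomorphic image'' of $R$ is by definition a homomorphic image $R/I$ whose only idempotents are $0$ and $1$. One direction is immediate: if $R$ itself is semi-nil clean (resp.\ periodic), then so is every homomorphic image of $R$, since both properties pass to quotients --- for periodicity this is trivial, and for semi-nil cleanness one notes that the images of a periodic and a nilpotent element are again periodic and nilpotent. In particular every indecomposable homomorphic image is semi-nil clean (resp.\ periodic), and this direction requires no hypothesis on $R$ at all.

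For the converse, I would argue by contradiction using Zorn's Lemma. Suppose $R$ is not semi-nil clean (resp.\ not periodic); then the set $\mathcal{F}$ of ideals $I$ of $R$ such that $R/I$ fails to be semi-nil clean (resp.\ periodic) is nonempty, since $0 \in \mathcal{F}$. I would first check that $\mathcal{F}$ is closed under unions of chains: if $\{I_\lambda\}$ is a chain in $\mathcal{F}$ with union $J$, and $R/J$ were semi-nil clean, then a suitable finite witness --- an element together with the finitely many ring-theoretic relations certifying ``periodic plus nilpotent'' modulo $J$, or certifying $x^m = x^n$ modulo $J$ --- would already hold modulo some $I_\lambda$ (a standard finitary/compactness argument, since each such relation involves only finitely many elements of the ideal), contradicting $I_\lambda \in \mathcal{F}$. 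Hence $\mathcal{F}$ has a maximal element $M$ by Zorn's Lemma, and $R/M$ is a ring that is not semi-nil clean (resp.\ not periodic) but every proper homomorphic image of it is.

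It then remains to show that such a maximal counterexample $\bar R = R/M$ is indecomposable, which contradicts the hypothesis and finishes the proof. If $\bar R$ were decomposable, we could write $\bar R \cong \bar R/\bar I \times \bar R/\bar J$ for proper ideals $\bar I, \bar J$ (coming from a nontrivial idempotent), and by maximality of $M$ both $\bar R/\bar I$ and $\bar R/\bar J$ would be semi-nil clean (resp.\ periodic). But then by Lemma~\ref{lift}(2) the product $\bar R$ would itself be semi-nil clean (resp.\ periodic), a contradiction. Hence $\bar R$ is an indecomposable homomorphic image of $R$ that is not semi-nil clean (resp.\ not periodic), against the hypothesis.

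The main obstacle I anticipate is the chain-closure step: one must make precise that ``$R/J$ is semi-nil clean'' is witnessed by finitely much data so that it descends to some $R/I_\lambda$. For periodicity this is clean, since $x^m - x^n \in J = \bigcup I_\lambda$ means $x^m - x^n \in I_\lambda$ for some $\lambda$. For semi-nil cleanness the witness for a fixed $x$ is a pair $(a,b)$ with $x - a - b \in J$, $b^k \in J$, and $a^r - a^s \in J$ for suitable exponents; each of these three memberships lands in a single $I_\lambda$, so taking the largest of the three (they form a chain) does the job for that $x$ --- but one needs this simultaneously for \emph{all} $x \in R/J$, which is where a little care is needed, and where invoking that semi-nil cleanness is not an identity but an ``each element'' condition means the argument is really: if $R/J$ is semi-nil clean then so is each $R/I_\lambda$? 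That implication is false in general, so instead one argues the contrapositive directly --- a single \emph{bad} element $\bar x \in R/J$ (one with no valid decomposition) would, if it came from a bad element already present in $R/I_\lambda$, give the contradiction; and indeed any $x \in R$ whose image in $R/J$ is bad has image in $R/I_\lambda$ bad as well for \emph{every} $\lambda$, since a decomposition over $I_\lambda$ would push forward to one over $J$. Thus $\mathcal{F}$ is in fact an \emph{upward-closed} family, trivially closed under chain unions, and Zorn applies without difficulty; I would streamline the write-up accordingly.
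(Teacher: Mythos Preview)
Your overall strategy matches the paper's, but the chain-closure step contains a genuine error that you do not repair. You correctly identify the difficulty: the ``bad'' element in $R/I_\lambda$ may vary with $\lambda$, so it is not clear that $R/J$ (with $J=\bigcup_\lambda I_\lambda$) still has a bad element. Your attempted fix, however, establishes the wrong direction. What you actually prove is that if $x+J$ is bad in $R/J$ then $x+I_\lambda$ is bad in each $R/I_\lambda$; this makes $\mathcal{F}$ \emph{downward}-closed ($J\in\mathcal{F}$ and $I\subseteq J$ imply $I\in\mathcal{F}$), not upward-closed, and downward-closed families are certainly not ``trivially closed under chain unions''. To get $J\in\mathcal{F}$ from $\{I_\lambda\}\subseteq\mathcal{F}$ you would need to manufacture a bad element of $R/J$ out of the (possibly different) bad elements of the $R/I_\lambda$, and nothing in your argument does that.

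The paper sidesteps this by fixing a single non--semi-nil-clean element $a\in R$ at the outset and working with
\[
\Sigma=\{\,I\lhd R:\ a+I\ \text{is not semi-nil clean in}\ R/I\,\}
\]
instead of your $\mathcal{F}$. Now the chain argument becomes exactly the finitary one you sketched first: if $a+J$ \emph{were} semi-nil clean, the three witnessing relations $a-b-c\in J$, $b^m-b^n\in J$, $c^k\in J$ lie in a single $I_{\lambda_0}$ (three elements of the union of a chain), so $a+I_{\lambda_0}$ would already be semi-nil clean, a contradiction. The indecomposability step then goes through exactly as you wrote it, applied to the single element $a$: by maximality each $a+K_j$ is semi-nil clean, and componentwise decompositions combine to one for $a+L$. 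So your proof is one small but essential modification away from working: replace $\mathcal{F}$ by $\Sigma$.
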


\begin{proof}
The  necessity is quite clear, so we omit the details.

For the sufficiency, suppose that every indecomposable homomorphic image of $R$ is semi-nil clean, and assume on the contrary that $a\in R$ is not semi-nil clean (the proof for ``periodic" is similar, so we drop off the arguments). Then, the set
$$\Sigma=\left\{I\lhd R \mid a+I\in R/I ~\text {is not semi-nil clean}\right\}$$
is not empty. For a chain $\{I_\lambda\}$ of elements of $\Sigma$, let $I=\bigcup_\lambda I_\lambda$. Thus, $I$ is obviously an ideal (two-sided) of $R$.

If now $a+I$ is semi-nil clean in $R/I$, then there exist $b,c\in R$ and $m,n,k\in \mathbb{N}$ such that
\begin{equation}\label{eq1}
a+I=(b+I)+(c+I),~ b^n-b^m\in I~{\rm and}~ c^k\in I.
\end{equation}
But, because  $\{I_\lambda\}$ is a chain, there exists some $\lambda_0$ such that (\ref{eq1}) holds in $R/I_{\lambda_0}$, i.e., $a+I_{\lambda_0}$ is a semi-nil clean element in $R/I_{\lambda_0}$. This contradiction shows that $I\in\Sigma$. Therefore, with the help of the classical Zorn's Lemma, $\Sigma$ has a maximal element, say $L$. It now suffices to show that the factor-ring $R/L$ is indecomposable.

Assume the opposite, namely that $R/L$ is decomposable. So, there exist ideals $K_j\supsetneq L$ of $R$ ($j=1, 2$) such that
$$R/L\cong (R/K_1)\bigoplus (R/K_2), ~{\rm via~the~map}~r+L\mapsto(r+K_1,r+K_2).$$
By the maximality of $L$ in $\Sigma$, the element $a+K_j$ is semi-nil clean in $R/K_j$ for $j=1,2$. Hence, Lemma~\ref{lift} tells us that $a+L$ is semi-nil clean in $R/L$, a contradiction. Consequently, the quotient $R/L$ is really indecomposable, as claimed.
\end{proof}


In closing this section, an important and extremely difficult query is of whether or not each semi-nil clean (in particular, each weakly periodic ring) is clean? If {\it not}, does it follow that each ring which is simultaneously clean and semi-nil clean (in particular, clean and weakly periodic) is also periodic?

\medskip


\section{Semi-Nil Clean Group Rings}\label{sec3}

We shall apply now the results obtained so far to group rings. To this purpose, let $FG$ be a semi-nil clean  group algebra of a group $G$ over a field $F$. If $G$ is torsion and the unit group ${\rm U}(FG)$ satisfies a group identity, then according to the well-known positive answer to Hartley's conjecture, $FG$ satisfies a polynomial identity \cite[Theorem~1.2.27]{Lee}. Therefore, $FG$ is periodic by Corollary~\ref{matrix1}. Specifically, we formulate the following.

\begin{proposition}
Let $G$ be a torsion group and let $F$ be a field such that ${\rm U}(FG)$ satisfies a group identity. If $FG$ is  semi-nil clean, then  $FG$ is periodic.
\end{proposition}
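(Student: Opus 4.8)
The plan is to reduce, via the already-established results, the statement to a known theorem on group rings satisfying a polynomial identity. Concretely, suppose $FG$ is semi-nil clean and ${\rm U}(FG)$ satisfies a group identity. The key external input is the resolution of Hartley's conjecture: if $G$ is a torsion group, $F$ a field, and ${\rm U}(FG)$ satisfies a group identity, then $FG$ satisfies a polynomial identity (this is \cite[Theorem~1.2.27]{Lee}). Thus $FG$ is a PI ring.

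Once we know $FG$ is a PI ring, the result drops out of Corollary~\ref{matrix1}: that corollary asserts precisely that a PI ring is semi-nil clean if and only if it is periodic. Since $FG$ is assumed semi-nil clean and is now known to be PI, we conclude $FG$ is periodic. So the proof is essentially two sentences: invoke Hartley's conjecture to get the polynomial identity, then invoke Corollary~\ref{matrix1}.

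The one genuine subtlety — and the step I would be most careful about — is checking that the torsion hypothesis on $G$ really is available to apply \cite[Theorem~1.2.27]{Lee}; in the statement $G$ is assumed torsion outright, so there is nothing to prove here, but one should make sure no additional hypothesis (e.g. that $F$ be infinite, or of a particular characteristic) is hidden in the cited form of Hartley's conjecture. Assuming the cited theorem applies as stated, there is no real obstacle: the work has all been done in Corollary~\ref{matrix1}, and this proposition is just the specialization to group algebras whose unit group satisfies a group identity. The write-up I would give is:

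\begin{proof}
Since $G$ is torsion and ${\rm U}(FG)$ satisfies a group identity, the positive solution to Hartley's conjecture \cite[Theorem~1.2.27]{Lee} guarantees that $FG$ satisfies a polynomial identity, i.e., $FG$ is a PI ring. As $FG$ is semi-nil clean by hypothesis, Corollary~\ref{matrix1} now yields that $FG$ is periodic, as claimed.
\end{proof}
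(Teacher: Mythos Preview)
Your proof is correct and matches the paper's own argument exactly: invoke the positive solution to Hartley's conjecture \cite[Theorem~1.2.27]{Lee} to conclude that $FG$ is a PI ring, and then apply Corollary~\ref{matrix1} to deduce periodicity from semi-nil cleanness. The paper in fact gives precisely this two-step justification in the paragraph preceding the proposition.
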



It is worthwhile to indicate also that, if $R$ is a ring and $G$ is a nilpotent group, then $RG$ is nil clean if, and only if, $R$ is a nil clean ring and $G$ is a 2-group (see \cite[Theorem~2.7]{nil-clean GR}).

\medskip

We now deduce the following chief result.

\begin{theorem}\label{local-nilpotent}
Let $R$ be a weakly $2$-primal  ring and let $G$ be a nilpotent group. Then, the following statements are equivalent:
\begin{itemize}
\item [(1)] $RG$ is semi-nil clean.
\item [(2)] $R$ is periodic and $G$ is locally finite.
\item [(3)] $RG$ is periodic.
\end{itemize}
\end{theorem}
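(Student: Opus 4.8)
The plan is to prove the cycle $(1)\Rightarrow(2)\Rightarrow(3)\Rightarrow(1)$, with the bulk of the work in $(1)\Rightarrow(2)$. First, for $(3)\Rightarrow(1)$ there is nothing to do, since a periodic ring is weakly periodic, hence semi-nil clean by the inclusions established in the introduction. For $(2)\Rightarrow(3)$, I would argue as follows: since $G$ is locally finite and nilpotent, it is the directed union of its finite subgroups, and $RG$ is correspondingly the directed union of the group rings $RH$ with $H\leq G$ finite; as periodicity of a ring is a property detected on finitely generated subrings, it suffices to show each $RH$ is periodic for $H$ finite nilpotent. Here I would invoke the classical fact (e.g.\ via \cite[Theorem~2.21]{Bou} or the matrix-ring results behind Corollary~\ref{matrix1}, together with the standard structure theory: $R$ periodic forces $\ch(R)=m>0$, $R/J(R)$ is a periodic PI-ring, and $RH$ is finitely generated as a module over the periodic commutative ring generated by $m$ inside $R$) that the group ring of a finite group over a periodic ring is again periodic. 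Alternatively, one reduces mod the nil ideal $J(R)\cdot RH$ and mod $\mathcal{P}(R)H$ to the case $R$ a finite direct product of matrix rings over finite fields, where $RH$ is a finite ring, hence trivially periodic; then Lemma~\ref{lift}(1) lifts periodicity back.

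The heart of the theorem is $(1)\Rightarrow(2)$. Suppose $RG$ is semi-nil clean. Since $R$ is a subring — in fact, $R\cong RG/\omega(RG)$ is a homomorphic image of $RG$ via the augmentation, and semi-nil cleanness passes to homomorphic images — we get that $R$ is semi-nil clean; being weakly $2$-primal, $R$ is in particular an NI ring (indeed $\mathrm{Nil}(R)$ equals the Levitzki radical, an ideal), so Theorem~\ref{NI} immediately yields that $R$ is periodic and $J(R)=\mathrm{Nil}(R)$. It remains to show $G$ is locally finite, i.e.\ every finitely generated subgroup $H\leq G$ is finite. Fix such an $H$; it is a finitely generated nilpotent group, so if it is infinite it contains an element of infinite order, and in fact (using nilpotency) one can find a central element $z\in H$ of infinite order, or at least reduce to the case of an infinite cyclic subgroup. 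The key point I would exploit is Proposition~\ref{center}(1): every central subring of a semi-nil clean ring is periodic. If $z\in G$ is central of infinite order, then $R[z,z^{-1}]$ — or even the subring $\mathbb{Z}[z]$ or $(\text{prime subring})[z]$ — sits inside $\mathrm{Z}(RG)$ and contains a non-periodic element (the Laurent polynomial variable $z$, which satisfies no relation $z^k=z^\ell$), contradicting periodicity of that central subring. This handles infinite cyclic central subgroups directly.

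The main obstacle is that a finitely generated infinite nilpotent group $H$ need not have a central element of infinite order lying in $G$'s center — it has central elements of infinite order in $\mathrm{Z}(H)$, but these need not be central in all of $RG$. To get around this I would pass to the subgroup: if $H$ is infinite, then since $H$ is nilpotent and finitely generated, its center $\mathrm{Z}(H)$ is a finitely generated abelian group that is infinite (this is a standard fact: a finitely generated nilpotent group with finite center is finite), so $\mathrm{Z}(H)$ contains an element $z$ of infinite order. Now $z$ is central in $RH$, and $RH$ is semi-nil clean (it is a homomorphic image of $RG$, via the projection $RG\to RH$ induced by a retraction $G\to H$ — here one must be a little careful, as such a retraction need not exist; instead use that $RH\hookrightarrow RG$ is a subring and that $RH$ is a homomorphic image of $RG$ only when $H$ is a retract). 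The cleanest fix: $RH$ need not be semi-nil clean as a subring, but the element $z\in\mathrm{Z}(RG)$ lies in the central subring $\mathbb{Z}[z]\subseteq\mathrm{Z}(RG)$, and $\mathbb{Z}[z]$ is a polynomial ring in one variable (as $z$ has infinite order and $\ch(R)=m>0$, it is $\mathbb{Z}/m[z]$, still not periodic since $z$ is a free variable), which by Proposition~\ref{center}(1) must be periodic — the desired contradiction. Hence $H$ is finite, $G$ is locally finite, and $(1)\Rightarrow(2)$ is complete. I expect the only genuinely delicate step is verifying that an infinite finitely generated nilpotent group contains a central element of infinite order and that the corresponding group-ring element is genuinely non-periodic inside a central subring; both are standard but worth stating carefully.
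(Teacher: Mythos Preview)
Your argument for $(1)\Rightarrow(2)$ contains a genuine gap at exactly the point you flag as ``the cleanest fix.'' You choose $z$ to be an element of infinite order in $\mathrm{Z}(H)$ for a finitely generated subgroup $H\le G$, and then write ``the element $z\in\mathrm{Z}(RG)$ lies in the central subring $\mathbb{Z}[z]\subseteq\mathrm{Z}(RG)$.'' But $z\in\mathrm{Z}(H)$ does \emph{not} imply $z\in\mathrm{Z}(G)$, so $z$ need not be central in $RG$, and Proposition~\ref{center}(1) does not apply to the subring it generates. You yourself identified the two natural workarounds (make $RH$ a quotient of $RG$, or make $z$ central in $G$) and correctly noted that neither is available in general; the ``fix'' then silently assumes the second one.

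The paper avoids this obstruction by induction on the nilpotency class of $G$ rather than by passing to a finitely generated subgroup. The key observations are: (i) $\mathrm{Z}(G)\subseteq\mathrm{Z}(RG)$ genuinely, so Lemmas~\ref{center1} and~\ref{center0} (equivalently Proposition~\ref{center}(1)) force every element of $\mathrm{Z}(G)$ to be torsion, hence $\mathrm{Z}(G)$ is locally finite; and (ii) $R\bigl(G/\mathrm{Z}(G)\bigr)$ is a \emph{homomorphic image} of $RG$ via the group quotient $G\to G/\mathrm{Z}(G)$, so it inherits semi-nil cleanness. Since $G/\mathrm{Z}(G)$ has strictly smaller nilpotency class, the induction hypothesis gives that $G/\mathrm{Z}(G)$ is locally finite, and an extension of a locally finite group by a locally finite group is locally finite. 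This is precisely the missing idea: trade the (unavailable) subring $RH$ for the (available) quotient $R\bigl(G/\mathrm{Z}(G)\bigr)$.

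Your handling of $R$ periodic via the augmentation and Theorem~\ref{NI} is correct and matches the paper. For $(2)\Rightarrow(3)$ and $(3)\Rightarrow(1)$ the paper simply invokes \cite[Theorems~1.2 and~1.6]{ABD}; your directed-union sketch for $(2)\Rightarrow(3)$ is plausible but would need a clean statement that $RH$ is periodic for finite $H$ over periodic $R$, which is exactly what those references supply.
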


\begin{proof}
Taking into account \cite[Theorems~1.2 and 1.6]{ABD}, it suffices to prove only the implication (1)~$\Rightarrow$~(2). To this end, suppose that \( RG \) is a semi-nil clean ring. We know that \( RG/\Delta(G) \cong R \), where \( \Delta(G) \) is the augmentation ideal of \( RG \). Therefore, \( R \) is also a semi-nil clean ring. Utilizing Theorem~\ref{NI}, we conclude that \( R \) is periodic.

Let \( G \) be a nilpotent group of class \( n \). We claim that \( G \) is locally finite. We shall establish our claim by induction on \( n \). If \( n = 1 \), then \( G = \mathrm{Z}(G) \)  is a torsion (= locally finite) group in view of Lemmas~\ref{center1} and \ref{center0}.  Next, assume that the claim is true for all nilpotent groups of class less than \( n \geq 2 \). Note that \( G/\mathrm{Z}(G) \) is a nilpotent group of class \( n - 1 \). In addition, \( R(G/\mathrm{Z}(G)) \), as being a homomorphic image of \( RG \), is still a semi-nil clean ring. Therefore, by the induction hypothesis, the factor-group \( G/\mathrm{Z}(G) \) is locally finite. Since \( \mathrm{Z}(G) \) is also locally finite, we conclude that \( G \) is locally finite, as wanted.
\end{proof}


Now, as a generalization of \cite[Corollary~1.4]{ABD}, we have the following.

\begin{corollary}\label{Cor10}
Let $R$ be a weakly $2$-primal ring and let $G$ be a locally nilpotent group. Then, $RG$ is periodic if and only if $R$ is periodic and $G$ is locally finite.
\end{corollary}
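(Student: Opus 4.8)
The plan is to reduce the statement to the already-settled nilpotent case (Theorem~\ref{local-nilpotent}) by exploiting the fact that periodicity of a group ring is a ``finitely determined'' property. The first step is to record the following elementary reduction, valid for an arbitrary ring $R$ and an arbitrary group $G$: the group ring $RG$ is periodic if and only if $RH$ is periodic for every finitely generated subgroup $H\leq G$. The forward direction is immediate, since each $RH$ is a subring (with the same identity) of $RG$ and periodicity passes to subrings. For the converse, given $\alpha\in RG$, its support is a finite subset of $G$, hence is contained in the finitely generated subgroup $H_\alpha=\langle \mathrm{supp}(\alpha)\rangle$; thus $\alpha\in RH_\alpha$, and if $RH_\alpha$ is periodic we get $\alpha^m=\alpha^n$ for some $m\neq n$ inside $RH_\alpha\subseteq RG$, so $\alpha$ is periodic in $RG$.

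Next I would invoke two standard facts from group theory: every finitely generated locally nilpotent group is nilpotent, and every finitely generated locally finite group is finite. Combined with the hypothesis that $G$ is locally nilpotent, the first fact shows that every finitely generated subgroup $H$ of $G$ is a (finitely generated) nilpotent group, so Theorem~\ref{local-nilpotent} is applicable to $RH$ — using that $R$ is weakly $2$-primal by hypothesis, which is all that theorem requires of the coefficient ring.

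For the implication ``$RG$ periodic $\Rightarrow$ $R$ periodic and $G$ locally finite'': since $R\cong RG/\Delta(G)$ is a homomorphic image of the periodic ring $RG$, it is periodic. Moreover, for each finitely generated $H\leq G$ the subring $RH\subseteq RG$ is periodic and $H$ is nilpotent, so the equivalence $(3)\Leftrightarrow(2)$ of Theorem~\ref{local-nilpotent} forces $H$ to be locally finite; being finitely generated, $H$ is then finite. Hence $G$ is locally finite. Conversely, assume $R$ is periodic and $G$ is locally finite. For a finitely generated $H\leq G$, the group $H$ is simultaneously nilpotent and finite, so the implication $(2)\Rightarrow(3)$ of Theorem~\ref{local-nilpotent} yields that $RH$ is periodic. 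By the reduction of the first paragraph, $RG$ is periodic, which completes the proof.

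I do not anticipate a genuine obstacle here: the whole argument is essentially a ``passage to the colimit'' over finitely generated subgroups. The only points requiring a little care are the support argument in the reduction step and the citation of the two group-theoretic facts above; once these are in place, everything collapses onto the nilpotent case already treated in Theorem~\ref{local-nilpotent}.
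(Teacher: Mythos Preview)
Your argument is correct, and it is essentially the intended one: the paper states the corollary without proof, presenting it as a direct generalization (via~\cite[Corollary~1.4]{ABD}) of Theorem~\ref{local-nilpotent}, and your local reduction to finitely generated subgroups is exactly how one passes from the nilpotent case handled there to the locally nilpotent case. The support argument and the two group-theoretic facts you invoke are precisely what is needed to make this passage rigorous.
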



\medskip

Let $R$ be a ring. The set of all torsion elements of ${\rm U}(R)$ is designed by $\mathcal{T}(R)$. Obviously, the containment $\mathcal{T}(R) \subseteq {\rm P}(R)$ holds.


\medskip

We are now ready to offer the following.

\begin{lemma}\label{AC}
If $R$ is a ring of positive characteristic and ${\rm U}(R)$ is a locally nilpotent group, then $\mathcal{T}(R)+\mathcal{T}(R)\subseteq {\rm P}(R)$.
\end{lemma}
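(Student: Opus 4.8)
The claim is that if $\ch(R)=m>0$ and $\mathrm{U}(R)$ is locally nilpotent, then $\mathcal{T}(R)+\mathcal{T}(R)\subseteq \mathrm{P}(R)$. So fix $u,v\in\mathcal{T}(R)$; each is a torsion unit, and I want $u+v\in\mathrm{P}(R)$. The first move is to reduce to a finitely generated, hence (by local nilpotence) genuinely \emph{nilpotent}, subgroup: let $H=\langle u,v\rangle\subseteq \mathrm{U}(R)$, which is a nilpotent group, and moreover every element of $H$ has finite order since $u,v$ do and... wait, that last point is not automatic in a nilpotent group. Better: since $u,v$ are torsion, and $H$ is finitely generated nilpotent, the torsion elements of $H$ form a subgroup (this is the standard fact that in a nilpotent group the torsion elements form a subgroup, and in the finitely generated case a \emph{finite} subgroup); so $H$ itself is finite. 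Thus $u$ and $v$ generate a finite subgroup of $\mathrm{U}(R)$.

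**Main step.** Now I want to exploit $\ch(R)=m>0$. Write $m=p_1^{e_1}\cdots p_r^{e_r}$. The subring $S=\mathbb{Z}_m[u,v]\subseteq R$ generated by $u,v$ over the prime ring $\mathbb{Z}_m$ is a finitely generated $\mathbb{Z}_m$-algebra, and since $H=\langle u,v\rangle$ is a finite group, $S$ is spanned as a $\mathbb{Z}_m$-module by the finitely many elements of $H$, hence $S$ is a \emph{finite ring}. The point is then: $u+v$ lies in a finite ring $S$, and every element of a finite ring is periodic (the sequence of powers $x,x^2,x^3,\dots$ must repeat), so $u+v\in\mathrm{P}(S)\subseteq\mathrm{P}(R)$. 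That finishes it, and it's clean — but I should double-check the subtle point that a subring here means a possibly non-unital subring, or that the unital subring generated by $u,v$ is still finite: since $1\in S$ anyway (it's $u u^{-1}$, or just the identity of $R$, and $\mathbb{Z}_m\cdot 1\subseteq S$), this is fine, and $S$ being a finitely generated module over the finite ring $\mathbb{Z}_m$ makes $S$ finite.

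**Where the difficulty lies.** The genuinely load-bearing fact is that a finitely generated torsion nilpotent group is finite — equivalently, that $\langle u,v\rangle$ is finite once $u,v$ have finite order. This is where local nilpotence of $\mathrm{U}(R)$ is used: without it, two torsion units can generate an infinite group (e.g.\ in a free product), and then the argument collapses. So the main obstacle is verifying/citing: (i) in a nilpotent group the set of torsion elements is a subgroup, and (ii) a finitely generated torsion nilpotent group is finite. Both are classical (the second follows from (i) together with the fact that a finitely generated nilpotent group is polycyclic, so a torsion polycyclic group is finite); I would cite a standard group theory reference such as Robinson's text, or note that the authors likely have an analogous reduction in their earlier group-ring arguments. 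Everything after that reduction — passing to the finite subring $\mathbb{Z}_m[u,v]$ and observing finite rings are periodic — is routine and needs no real calculation.
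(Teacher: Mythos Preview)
Your argument is correct, and it is genuinely different from the paper's proof. The paper proceeds by case analysis on the characteristic: first it treats $\ch(R)=p$ prime, where it uses that $a^{-1}b\in\mathcal{T}(R)$ (this is where local nilpotence enters for them, via the fact that $\mathcal{T}(R)$ is a subgroup), observes that $\mathbb{F}_p[a^{-1}b]$ is finite so that $(a^{-1}b)^{p^m}=a^{-1}b$ for some $m$, and then applies the Frobenius to get $(1+a^{-1}b)^{p^m}=1+a^{-1}b$, whence $a+b=a(1+a^{-1}b)\in\mathcal{T}(R)$. The general case is then reduced to prime-power characteristic by the Chinese remainder decomposition, and prime-power to prime by passing modulo the nil ideal $pR$.

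Your route bypasses all of this: you exploit local nilpotence more fully, not merely to know $\mathcal{T}(R)$ is a subgroup but to conclude that the finitely generated subgroup $H=\langle u,v\rangle$ is nilpotent and torsion, hence finite; then the subring $\mathbb{Z}_m[H]$ is a finite ring and periodicity of $u+v$ is immediate. This is shorter and more conceptual, and it works uniformly without splitting on the characteristic. The paper's argument, on the other hand, yields the slightly sharper conclusion that $a+b$ actually lands in $\mathcal{T}(R)\cup\mathrm{Nil}(R)$ rather than merely in $\mathrm{P}(R)$, though this extra precision is not used elsewhere in the paper. The group-theoretic input you rely on (finitely generated torsion nilpotent groups are finite) is indeed classical; a citation to Robinson or a similar text would be appropriate.
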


\begin{proof}
Recall that, since ${\rm U}(R)$ is locally nilpotent, $\mathcal{T}(R)$ is a subgroup. We first assume that $\ch (R)=p$, a prime number. Suppose also that $a,b\in \mathcal{T}(R)$ and $a+b\neq 0$. Since $b^{-1}a \in \mathcal{T}(R)$, the extension $\mathbb F_p(a^{-1}b)$ is a finite field. Thus, there exists a positive integer $m$ such that $(a^{-1}b)^{p^m}=a^{-1}b$, and hence $$(1+a^{-1}b)^{p^m}= 1+a^{-1}b.$$ This shows that $1+a^{-1}b\in \mathcal{T}(R)$, and so $$a+b=a( 1+a^{-1}b)\in \mathcal{T}(R).$$

Now, let us write $\ch(R)=p_1^{n_1}p_2^{n_2}\dots p_k^{n_k}$, where all $p_i$'s are distinct primes. So, we have the decomposition $R\cong \prod_{i=1}^k R_i$, where, for each $i$, $R_i$ is a ring of characteristic $p_i^{n_i}$. As for every $i$, the group ${\rm U}(R_i)$ is also locally nilpotent, Lemma~\ref{lift} ensures that it suffices to show the inclusion $\mathcal{T}(R_i)+\mathcal{T}(R_i)\subseteq {\rm P}(R_i)$, for each $i$. That is why, we may assume for our convenience that $R$ itself has a prime power characteristic.

Now, $pR$ is obviously a nil-ideal of $R$, and thus it follows that ${\rm U}(R/(pR))$ is locally nilpotent. Let $a, b\in \mathcal{T}(R)$. Then, $\bar a,\bar b\in\mathcal{T}(R/(pR))$. Therefore, under presence of the first part, either $\bar a+\bar b=\bar 0$ or $\bar a+\bar b\in \mathcal{T}(R/(pR))$. In the first situation, one checks that $a+b\in {\rm Nil}(R)$. In the second situation, one inspects that $(a+b)^t=1+c$, where $t$ is a natural number and $c\in pR$ is nilpotent. Consequently, $1+c\in \mathfrak{u}(R)$, which with Lemma~\ref{center1} in hand gives that $1+c$ is torsion. This insures that $a+b\in\mathcal{T}(R)$, as promised.
\end{proof}


We now need the following technicality.

\begin{lemma}\label{torsion-free}
If $F$ is a field, $H$ is a non-trivial locally nilpotent group, and the group ring $FH$ is semi-clean, then $H$ is not torsion-free.
\end{lemma}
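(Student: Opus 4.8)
The plan is to prove the contrapositive: assuming $H$ is torsion-free (as well as non-trivial and locally nilpotent) and that $FH={\rm P}(FH)+{\rm U}(FH)$, I will derive a contradiction. Passing to an abelian quotient of $H$ is not a robust strategy here (a non-trivial locally nilpotent group can be perfect), so instead the structural input I would use is that \emph{a torsion-free locally nilpotent group is two-sidedly (fully) orderable} — a classical theorem. Fix such an order $\leq$ on $H$; it makes $FH$ rigid enough to be incompatible with semi-cleanness.

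First I would record the standard consequences of bi-orderability. For $0\neq f\in FH$ let $\supp(f)\subseteq H$ be its support and write $\min f$, $\max f$ for the least and greatest elements of $\supp(f)$ under $\leq$. Since the order is both left- and right-invariant, among all products $h_1h_2$ with $h_1\in\supp(f)$, $h_2\in\supp(g)$ the product $(\min f)(\min g)$ is the unique smallest and $(\max f)(\max g)$ the unique largest; comparing coefficients yields $\min(fg)=(\min f)(\min g)$ and $\max(fg)=(\max f)(\max g)$ for all nonzero $f,g$. In particular $FH$ is a domain. Moreover, if $uv=1$ then $(\min u)(\min v)=1=(\max u)(\max v)$, and since inversion reverses $\leq$ in a bi-ordered group this forces $\min u=\max u$; hence every unit of $FH$ is a \emph{trivial} unit $\beta g$ with $\beta\in F^{\times}$, $g\in H$. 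Because $FH$ is a domain, a periodic element is either $0$ or a unit of finite order; a trivial unit $\beta g$ with $(\beta g)^k=1$ satisfies $g^k=1$, so $g=1$ as $H$ is torsion-free, i.e. the periodic element lies in $F$. Thus ${\rm P}(FH)\subseteq F$.

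Finally, since $H\neq 1$ is torsion-free, I would pick $1\neq x\in H$; then $x$ has infinite order, so $1,x,x^{2}$ are pairwise distinct and $y:=1+x+x^{2}\in FH$ has $|\supp(y)|=3$. If $FH$ were semi-clean, write $y=a+u$ with $a\in{\rm P}(FH)\subseteq F$ and $u\in{\rm U}(FH)$. Subtracting the scalar $a$ changes only the coefficient at $1$, so $u=y-a$ still has coefficient $1$ at both $x$ and $x^{2}$, giving $|\supp(u)|\geq 2$ and contradicting that $u$ is a trivial unit. Hence $FH$ is not semi-clean, which proves the lemma.

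I expect the only genuinely non-elementary ingredient to be the two-sided orderability of torsion-free locally nilpotent groups and the attendant support computation; everything afterward is bookkeeping. The subtle point to be careful about is that one really needs a \emph{two-sided} order — one-sided orderability is not enough, since the unit conjecture can fail for left-orderable groups, and the argument above genuinely uses bi-invariance both to get the domain property and to pin down the units.
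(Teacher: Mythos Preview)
Your proof is correct and follows essentially the same route as the paper's: assume $H$ is torsion-free, observe that $FH$ is then a domain whose units are all trivial (so that ${\rm P}(FH)\subseteq F$), and obtain a contradiction by trying to write $1+h+h^2$ as a periodic element plus a unit. The paper simply cites these two structural facts from \cite{Sehgal} and \cite{Rhemtulla} rather than deriving them, whereas you supply a self-contained argument via the bi-orderability of torsion-free locally nilpotent groups; the endgame is identical.
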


\begin{proof}
Suppose in a way of contradiction that $H$ is torsion-free. We know the following two major facts:

\medskip
\noindent{\bf Fact~1.} ${\rm U}(FH)=\{\lambda h\mid  0\neq\lambda\in F,~ h\in H \}$. (This follows from \cite[Corollary~8.5.5]{Sehgal} and its proof.)

\medskip
\noindent{\bf Fact~2.} $FH$ is a domain. (This follows from \cite[Theorem~8.5.14]{Sehgal} and \cite[Theorem~2]{Rhemtulla}.)

\medskip
In order to complete the proof of our statement, let $h\in H$ be a non-identity element; so, the elements $1, h, h^2$ are distinct. By Fact 2, we have ${\rm P}(FH)\setminus\{0\} =\mathcal{T}(FH)$. Since $FH$ is semi-clean,  $1+h+h^2$ can be written as a sum of two units in ${\rm U}(FH)$, which however cannot occur according to Fact 1. This proves our claim after all.
\end{proof}


The motivation for the next theorem  is a result about semi-clean group rings in \cite{Semi-clean GR} which states thus: Let $R$ be a {\it commutative} local ring and $G$ be an {\it abelian} group. Then, $RG$ is semi-clean if, and only if, $R$ is semi-clean and $G$ is locally finite.

\medskip

Concretely, we are now in a position to prove the following.

\begin{theorem}\label{local}
Let $R$ be a local ring and let $G$ be a group such that ${\rm U}(RG)$ is locally nilpotent. If $RG$ is semi-nil clean, then  $R$ is periodic and $G$ is locally finite.
\end{theorem}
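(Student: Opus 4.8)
The plan is to mimic the structure of the proof of Theorem~\ref{local-nilpotent}: first extract that $R$ is periodic, then prove by a reduction argument that $G$ is locally finite, using the local hypothesis on $R$ together with the locally nilpotent hypothesis on $\mathrm{U}(RG)$ in place of the nilpotency of $G$. Since $RG/\Delta(G)\cong R$ is a homomorphic image of the semi-nil clean ring $RG$, it is itself semi-nil clean, and being local it is periodic by Corollary~\ref{}; alternatively one invokes Theorem~\ref{NI} since a local ring is NI. So the first half is immediate.

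For the second half, the key reduction is to a finitely generated subgroup $H\le G$: it suffices to show every such $H$ is finite. Now $RH\le RG$, and $\mathrm{U}(RH)\le\mathrm{U}(RG)$ is locally nilpotent; also $RH$ is semi-clean (being semi-nil clean, by the discussion after Lemma~\ref{center1}, or directly since $RH$ is a homomorphic-image-closed-type situation — actually $RH$ is a subring, so one must be a bit careful: instead note $RH$ is semi-nil clean because it is not obviously a quotient of $RG$). The cleanest route is: reduce to $R$ a field by passing to $R/\mathrm{J}(R)$, whose group ring $(R/\mathrm{J}(R))G$ is semi-nil clean (as $\mathrm{J}(R)G$ is a nil ideal when $R$ is periodic local, so $\mathrm{J}(R)$ is nil, hence $\mathrm{J}(R)G$ is nil — using Theorem~\ref{NI} and a standard fact) and has locally nilpotent unit group. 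So assume $R=F$ is a field, $\mathrm{ch}(F)=p>0$ by Lemma~\ref{center1}, and $H$ finitely generated, locally nilpotent; such $H$ is nilpotent. If $H$ is infinite, a finitely generated infinite nilpotent group has a torsion-free quotient that is infinite cyclic, or more directly contains a torsion-free nilpotent (hence non-trivial torsion-free locally nilpotent) subquotient; but $FH$ semi-clean forces, via Lemma~\ref{torsion-free} applied to the relevant torsion-free quotient group ring, a contradiction. Concretely: if $H$ is not torsion, it has an element of infinite order, generating $\langle x\rangle\cong\mathbb Z$, and $F\langle x\rangle$ is a subring — here one needs semi-clean to pass to subrings, which it does not in general, so instead one should take a quotient: a finitely generated nilpotent group that is infinite surjects onto $\mathbb Z$ or onto an infinite $p'$-group; the former gives $F\mathbb Z$ as a quotient ring of $FH$, hence semi-clean, contradicting Lemma~\ref{torsion-free} (with $H=\mathbb Z$, which is non-trivial, locally nilpotent, torsion-free). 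If $H$ is torsion and finitely generated nilpotent, it is finite, and we are done.

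The hard part will be organizing the group-theoretic reduction so that at each stage one genuinely has a \emph{quotient} group ring (to preserve the semi-clean/semi-nil clean property) rather than a subring, and ensuring the torsion-free piece one lands on is non-trivial so Lemma~\ref{torsion-free} applies — the point being that an infinite finitely generated nilpotent group always admits an infinite cyclic quotient (e.g.\ via its abelianization, which is a finitely generated infinite abelian group), so $F\mathbb{Z}=(F H)/\mathfrak a$ is semi-clean, and Lemma~\ref{torsion-free} with $F$, $H=\mathbb Z$ yields the contradiction. Combined with Lemma~\ref{AC} and Lemma~\ref{center1} to handle the positive-characteristic bookkeeping, and the observation that a finitely generated torsion nilpotent group is finite, this closes the argument: every finitely generated subgroup of $G$ is finite, i.e.\ $G$ is locally finite.
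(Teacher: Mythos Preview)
Both halves of your argument have genuine gaps.

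\textbf{Showing $R$ is periodic.} Neither route you offer works. A local ring need not be NI: one always has $\mathrm{Nil}(R)\subseteq J(R)$, but $\mathrm{Nil}(R)$ need not be additively closed (e.g.\ in the power-series ring $k\langle\langle x,y\rangle\rangle/(x^2,y^2)$ the elements $x,y$ are nilpotent while $x+y$ is not), so Theorem~\ref{NI} is unavailable. And the corollary you invoke concerns \emph{weakly periodic} local rings; whether semi-nil clean implies weakly periodic is exactly the open Problem~\ref{1}. The paper instead argues directly: since $R$ is local, the idempotent power of any periodic element is $0$ or $1$, so each periodic element is either nilpotent or a torsion unit, and the two cases are handled via Lemmas~\ref{center1} and~\ref{AC}. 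Note that Lemma~\ref{AC} requires $\mathrm{U}(R)$ to be locally nilpotent, which comes from $\mathrm{U}(R)\subseteq\mathrm{U}(RG)$; the hypothesis on $\mathrm{U}(RG)$ is thus genuinely used here, and an argument that ignores it, as yours does, cannot be completed with the tools of the paper.

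\textbf{Showing $G$ is locally finite.} You correctly flag the subring problem and then walk into it anyway. After reducing to a field $F$, you pass to a finitely generated $H\le G$ and then to a quotient $F\mathbb{Z}$ of $FH$; but $FH$ is a \emph{subring} of $FG$, and semi-nil clean (or semi-clean) does not pass to subrings, so you have no reason to believe $FH$, let alone $F\mathbb{Z}$, is semi-clean. (Your side claim that $J(R)G$ is a nil ideal is also unjustified---$J(R)$ nil does not force $J(R)G$ nil without something like local nilpotence of $J(R)$---but it is in any case unnecessary: you only need $G$ to be locally nilpotent, and that already follows from $G\subseteq\mathrm{U}(RG)$.) The paper fixes this by taking a quotient of $G$ rather than a subgroup: since $G$ is locally nilpotent, $\mathcal{T}(G)\trianglelefteq G$ and $H:=G/\mathcal{T}(G)$ is torsion-free and locally nilpotent; then $FH$ (with $F=R/J(R)$) is an honest homomorphic image of $RG$, hence semi-nil clean, and Lemma~\ref{torsion-free} forces $H$ to be trivial. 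Thus $G=\mathcal{T}(G)$ is torsion, and a torsion locally nilpotent group is locally finite.
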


\begin{proof}
Suppose that $RG$ is semi-nil clean. In order to show that $R$ is periodic, let $x\in R$ and write $x=a+b$, where $a\in {\rm P}(R)$ and $b\in {\rm Nil}(R)$. We know that some power of $a$ is idempotent. Therefore, $a$ is either a nilpotent element or a torsion unit. We will consider these two possible cases separately:

\medskip

\noindent{\bf Case~1:} $a\in {\rm Nil}(R)$. Recalling that $R$ is local, and thus ${\rm Nil}(R)\subseteq {\rm J}(R)$, we have $x\in {\rm J}(R)$. For the same reason, we write $x-1=a'+b'$, where $b'\in {\rm Nil}(R)$ and either $a'\in {\rm Nil}(R)$ or $a'\in \mathcal{T}(R)$. If $a'$ is nilpotent, then $x-1\in {\rm J}(R)$, which leads to a contradiction. Thus, $a'\in \mathcal{T}(R)$. Now, write $x=a'+(b'+1)$, where $b'+1$ is unipotent. According to Lemma~\ref{center1}, we yield $b'+1\in \mathcal{T}(R)$. Furthermore, Lemma~\ref{AC} applies to derive that $x\in {\rm P}(R)$.

\medskip

\noindent{\bf Case~2:} $a\in \mathcal{T}(R)$. In this case, we may write $x+1=a+(b+1)$ and, similarly as above, $x+1 \in {\rm P}(R)$, hence $x\in {\rm P}(R)$. Therefore, $R$ is periodic.

\medskip
In order to show that $G$ is locally finite, put $F:=R/{\rm J}(R)$ and $H:=G/\mathcal{T}(G)$. Note that $F$ is a periodic division ring, whence a field. Now $FH$, as being a homomorphic image of $RG$, is semi-nil clean. But Lemma~\ref{torsion-free} guarantees that $H$ is trivial, i.e., $G=\mathcal{T}(G)$. Consequently, $G$ is a locally nilpotent torsion group, and hence it is locally finite, as stated.
\end{proof}


We close this section by inquiring about the truth of the converse of Theorem~\ref{local}.

\begin{problem}
Let \( R \) be a periodic ring and \( G \) a locally finite group. Is \( RG \) then semi-nil clean?
\end{problem}


\medskip

\section{Unit Semi-Nil Clean Rings}\label{sec4}

Mimicking \cite{UNC}, we say that a ring $R$ is {\it unit nil clean}, or just {\it UNC} for short, if each of its invertible elements is nil clean, that is, ${\rm U}(R) \subseteq \text{Id}(R) + \text{Nil}(R)$, where ${\rm Id}(R)$ is the set of all idempotents in $R$.

In this section, we examine the properties of those rings $R$ in which the more general inclusion ${\rm U}(R) \subseteq \text{P}(R) + \text{Nil}(R)$ is fulfilled, calling them {\it unit semi-nil clean} rings. The ring of integers $\Z$ is a simple example of a unit semi-nil clean ring that is apparently {\it not} a UNC ring.

Same as in \cite{Lam-Danchev}, a ring $R$ is said to be a {\it UU} ring if ${\rm U}(R) = \mathfrak{u}(R)$ and, same as in \cite{Cui}, a ring $R$ is said to be a {\it $\pi$-UU} ring if, for each $a \in {\rm U}(R)$, there exists a positive integer $n(a)$, depending on $a$, such that $a^{n(a)} \in \mathfrak{u}(R)$.


\medskip

We continue with the following.

\begin{proposition}\label{pi-UU}
If $R$ is a unit semi-nil clean ring with $\ch(R)>0$ and ${\rm Nil}(R)$ is an additive subgroup of $R$, then ${\rm J}(R)$ is nil. Particularly, $R$ is a unit semi-nil clean NI ring if and only if ${\rm J}(R)={\rm Nil}(R)$ and  ${\rm U}(R)$ is torsion.
\end{proposition}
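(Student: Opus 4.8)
The plan is to mimic the proof of Theorem~\ref{NI} as closely as possible, with the only substantive change being that we now control units rather than elements of the Jacobson radical. First I would take $x \in {\rm J}(R)$ and write $x = a + b$ with $a \in {\rm P}(R)$ and $b \in {\rm Nil}(R)$; but here is the subtlety: the hypothesis only guarantees a semi-nil clean decomposition for \emph{units}, not for arbitrary elements, so I cannot directly apply this to $x \in {\rm J}(R)$. The fix is to pass to $1 + x \in {\rm U}(R)$ (since $x \in {\rm J}(R)$), write $1 + x = a + b$ with $a \in {\rm P}(R)$, $b \in {\rm Nil}(R)$, say $b^n = 0$. Then $a = 1 + x - b$. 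By \cite[Lemma~5.2]{Ye} some power $a^k$ is idempotent, and I want to argue that $a^k$ is in fact $1$ modulo ${\rm Nil}(R)$, hence a unit, hence equal to $1$ (being an idempotent unit), which pins down $a$ as a unipotent-times-torsion-ish element and forces $x \in {\rm Nil}(R)$. Concretely: since ${\rm Nil}(R)$ is an additive subgroup, reduce modulo the nil ideal issue—but ${\rm Nil}(R)$ need not be an ideal a priori. Here I would instead run the argument of Theorem~\ref{NI} verbatim on the element $1+x$: we have $a^k$ idempotent and $a^k \in 1 + {\rm J}(R)$ up to a nilpotent correction, and in a ring where ${\rm Nil}(R)$ is an additive subgroup one shows $(1+x) - a$ being nilpotent together with $a^k$ idempotent forces $a^k = 1$ (an idempotent congruent to $1$ modulo nilpotents, when nilpotents form an additive group and $a^k$ commutes with the relevant nilpotent, must be $1$). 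Then $a$ has finite multiplicative order times a unipotent part; since $\ch(R) > 0$, by \cite[Theorem~3.2]{Lam-Danchev} unipotents are periodic, so $a \in {\rm U}(R) \cap {\rm P}(R)$ and $1 + x = a + b$ with $a$ a periodic unit and $b$ nilpotent, giving $x = (a-1) + b$; one then checks $a - 1 \in {\rm Nil}(R)$ because $a$ is a periodic unit in $1 + {\rm J}(R)$, so $x \in {\rm Nil}(R)$ as ${\rm Nil}(R)$ is closed under addition. This proves ${\rm J}(R)$ is nil, and if $R$ is NI then ${\rm J}(R) = {\rm Nil}(R)$.

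For the ``Particularly'' clause, the forward direction combines what we just proved with the observation that in an NI ring with ${\rm J}(R) = {\rm Nil}(R)$ every unit is periodic: given $u \in {\rm U}(R)$, write $u = a + b$ with $a$ periodic, $b$ nilpotent; since ${\rm Nil}(R)$ is an ideal, $\bar u = \bar a$ in $R/{\rm Nil}(R)$, which is reduced, and a periodic element of a reduced ring generates a finite cyclic group in the unit group (its idempotent power $a^k = \bar a^k$ lifts the situation), so $\bar u$ is a torsion unit; lifting back, $u^{m} = 1 + (\text{nilpotent})$ for suitable $m$, and since $\ch(R) > 0$ a unipotent is periodic, hence $u$ is periodic — i.e. ${\rm U}(R)$ is torsion. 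Conversely, if ${\rm J}(R) = {\rm Nil}(R)$ and ${\rm U}(R)$ is torsion, then every unit lies in ${\rm P}(R)$, so trivially ${\rm U}(R) \subseteq {\rm P}(R) + {\rm Nil}(R)$ and $R$ is unit semi-nil clean; and ${\rm Nil}(R) = {\rm J}(R)$ is automatically an ideal, so $R$ is NI.

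I would carry out the steps in this order: (i) reduce a ${\rm J}(R)$-statement to a unit-statement by passing to $1 + x$; (ii) apply \cite[Lemma~5.2]{Ye} to extract an idempotent power of the periodic part; (iii) use the additive-group hypothesis on ${\rm Nil}(R)$ to force that idempotent to be $1$; (iv) invoke $\ch(R) > 0$ and \cite[Theorem~3.2]{Lam-Danchev} to absorb unipotents into ${\rm P}(R)$; (v) conclude $x \in {\rm Nil}(R)$; (vi) handle the NI equivalence by working in the reduced ring $R/{\rm Nil}(R)$. The main obstacle I anticipate is step (iii): showing that an idempotent $e = a^k$ with $a - v$ nilpotent (where $v$ is a unit, here $v = 1+x$) must equal $1$, using only that ${\rm Nil}(R)$ is an additive subgroup — this is exactly the delicate point, and it likely requires either that the relevant nilpotents commute with $e$ (so that $e$ is congruent to a unit modulo a \emph{nil} piece and an idempotent unit is $1$) or a more careful Peirce-decomposition argument; I would lean on the same mechanism used implicitly in the proof of Theorem~\ref{NI} (the identity $a^k = (a^k)^{2^n} = (a^k)^n (a^k)^{2^n - n}$ combined with membership in a one-sided ideal) adapted to the unit setting.
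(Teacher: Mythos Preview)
Your plan for the first assertion has a real gap at step~(iii), and the mechanism from Theorem~\ref{NI} does not transfer to the unit setting. In that theorem the point is that $a = x - b$ with $x \in {\rm J}(R)$ and $b^n = 0$ forces $a^n \in {\rm J}(R)$: expanding $(x-b)^n$, every monomial contains a factor $x$ except $(-b)^n = 0$, and ${\rm J}(R)$ is a two-sided ideal, so $a^n \in {\rm J}(R)$ and hence the idempotent $a^k$ lies in ${\rm J}(R)$ and must vanish. If instead $a = (1+x) - b$, the monomial without $b$ is the unit $(1+x)^n$, but the cross terms are words alternating in $1+x$ and $b$; with ${\rm Nil}(R)$ merely an additive subgroup (not an ideal) you cannot place those cross terms in ${\rm Nil}(R)$, so you cannot conclude that $a^n$ is a unit, nor that the idempotent $a^k$ equals $1$. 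Your fallback claims---that $a \in 1 + {\rm J}(R)$, and that a torsion unit there has $a-1$ nilpotent---also fail: the first would require $b \in {\rm J}(R)$, which is not given, and the second is circular since nilness of ${\rm J}(R)$ is precisely what you are proving.

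The paper's fix uses the hypothesis $\ch(R) > 0$ at the \emph{start} rather than at the end: since $1$ and $a$ commute and are both periodic, Lemma~\ref{center0} makes $a' := 1 - a$ (equivalently $a-1$) periodic. Then $x = (a-1) + b$ exhibits $x \in {\rm J}(R)$ itself as periodic plus nilpotent, and now the first paragraph of the proof of Theorem~\ref{NI} applies verbatim---the relevant two-sided ideal is again ${\rm J}(R)$---to give $x \in {\rm Nil}(R)$. Your treatment of the forward direction of the ``Particularly'' clause via the reduced quotient $R/{\rm Nil}(R)$ is, by contrast, correct and slightly more direct than the paper's route, which re-runs the second paragraph of Theorem~\ref{NI} and invokes \cite[Lemma~3.5]{YKZ} to obtain a strongly nil-clean decomposition $y^k = e + c$ before observing that $e = y^k - c$ is an idempotent unit, hence $1$.
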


\begin{proof}
Let $x\in {\rm J}(R)$. Then, $1+x\in {\rm U}(R)$, so there exists   $a\in {\rm P}(R)$ such that $b:=x+1-a\in {\rm Nil}(R)$. As $\ch(R)>0$, we have $a':=1-a\in {\rm P}(R)$. Arguing as in the first paragraph of the proof of Theorem~\ref{NI}, we can replace $a$ with $a'$ to conclude that $x$ is nilpotent, i.e., ${\rm J}(R)$ is nil.   Particularly, if $R$ is an NI ring, ${\rm J}(R)={\rm Nil}(R)$.

Now, let $y$ be any element in ${\rm U}(R)$. Arguing as in the last paragraph of Theorem~\ref{NI}, there is a natural number $k$ such that  $y^k=e+c$, where $e\in {\rm Id}(R)$ and $c\in {\rm Nil}(R)$ such that $ec=ce$. Now, one finds that $e=y^k-c\in {\rm U}(R)$, and thus $e=1$. Therefore, $y^k\in \mathfrak{u}(R)$. But since $\mathfrak{u}(R)\subseteq \mathcal{T}(R)$, we detect that $y$ is torsion, as required.

The converse statement is trivial.
\end{proof}


As the ring of integers $\Z$ is both a unit semi-nil clean and a $\pi$-UU ring, the following problem arises rather naturally.

\begin{problem}
Is the statement of Proposition~\ref{pi-UU} also valid for rings of characteristic zero?
\end{problem}


\begin{remark}\label{ch>0}
Assume that $R$ is an NI ring with ${\rm J}(R)$ nil or, in other words, ${\rm J}(R) = {\rm Nil}(R)$. As the proof of Proposition~\ref{pi-UU} shows, if $R$ is unit semi-nil clean, then $R$ is manifestly $\pi$-UU (in fact, the assumption $\ch(R) > 0$ in Proposition~\ref{pi-UU} is redundant).
\end{remark}


We now arrive at the following.

\begin{corollary}\label{semisimple}
Let $R$ be a right (resp., left) perfect NI ring. Then, $R$ is unit semi-nil clean if and only if $R$ is periodic, if and only if ${\rm U}(R)$ is torsion.
\end{corollary}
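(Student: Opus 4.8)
The plan is to deduce Corollary~\ref{semisimple} from Proposition~\ref{pi-UU} (with the strengthening recorded in Remark~\ref{ch>0}) together with standard facts about (one-sided) perfect rings. First I would observe that for a right or left perfect ring $R$, the Jacobson radical ${\rm J}(R)$ is nil; indeed, ${\rm J}(R)$ is left (or right) $T$-nilpotent for a perfect ring, and in particular nil. Hence if $R$ is additionally an NI ring, then ${\rm Nil}(R)$ is an ideal containing ${\rm J}(R)$, and since $R/{\rm J}(R)$ is semisimple Artinian and an NI ring, its nilradical ${\rm Nil}(R/{\rm J}(R))$ is a nil ideal of a semisimple ring, hence zero; thus $R/{\rm J}(R)$ is reduced semisimple, i.e., a finite product of division rings, and ${\rm Nil}(R)={\rm J}(R)$. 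So the hypothesis ``${\rm J}(R)={\rm Nil}(R)$'' in Proposition~\ref{pi-UU} is automatic here, and ${\rm Nil}(R)$ is in particular an additive subgroup.

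Next I would run the equivalences. The implication ``$R$ periodic $\Rightarrow$ ${\rm U}(R)$ torsion'' is immediate: a periodic unit $u$ satisfies $u^m=u^n$ with $m>n$, so $u^{m-n}=1$. The implication ``${\rm U}(R)$ torsion $\Rightarrow$ $R$ unit semi-nil clean'' is trivial, since then every unit already lies in $\mathcal{T}(R)\subseteq{\rm P}(R)\subseteq{\rm P}(R)+{\rm Nil}(R)$. For the remaining implication, ``$R$ unit semi-nil clean $\Rightarrow$ $R$ periodic'': by Remark~\ref{ch>0}, since $R$ is NI with ${\rm J}(R)={\rm Nil}(R)$ nil, unit semi-nil cleanness forces ${\rm U}(R)$ torsion (this is exactly the ``particularly'' part of Proposition~\ref{pi-UU}, with the characteristic hypothesis removed). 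It then remains to upgrade ``${\rm U}(R)$ torsion'' to ``$R$ periodic'' for a one-sided perfect NI ring. For this I would pass to $\bar R = R/{\rm J}(R)$, which as noted is a finite product of division rings $D_i$; each ${\rm U}(D_i)$ is torsion, so each $D_i$ is an algebraic (in fact periodic) division ring — a classical fact: a division ring whose multiplicative group is torsion is a field that is algebraic over its prime field $\mathbb{F}_p$, hence locally finite, hence periodic. Therefore $\bar R$ is periodic, and since ${\rm J}(R)$ is nil, Lemma~\ref{lift}(1) lifts periodicity back to $R$.

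The main obstacle I anticipate is the step identifying $R/{\rm J}(R)$ and showing ${\rm Nil}(R)={\rm J}(R)$ cleanly — specifically, making sure the NI hypothesis is used correctly to conclude that the semisimple quotient is reduced (a product of division rings rather than a product of matrix rings over division rings). One has to be a little careful: ``$R$ NI'' does not literally pass to quotients in general, but ${\rm J}(R)$ being nil means ${\rm Nil}(R/{\rm J}(R))$ is the image of ${\rm Nil}(R)$, which is an ideal; a nil ideal of a semisimple Artinian ring is zero; so $R/{\rm J}(R)$ has no nonzero nilpotents, hence no matrix blocks, hence is a product of division rings. A secondary, but entirely standard, point is the division-ring fact that a torsion multiplicative group forces commutativity and periodicity — this follows since every element generates a finite subfield of the prime characteristic, so the division ring is an algebraic algebra over $\mathbb{F}_p$, and by Jacobson's commutativity theorem (every element satisfying $x^{n(x)}=x$) it is a field; one may alternatively just invoke that any element is a root of unity over $\mathbb{F}_p$. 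Once these are in place, the chain of equivalences closes immediately.
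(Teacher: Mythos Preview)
Your argument is correct and the equivalences close as you describe. The route, however, differs from the paper's in one interesting respect. You first use the NI hypothesis together with nilness of ${\rm J}(R)$ to show that $R/{\rm J}(R)$ is \emph{reduced}, hence a finite product of division rings with no matrix blocks; you then handle each division factor by the classical fact that a division ring with torsion unit group is a locally finite field. The paper instead applies Artin--Wedderburn in full generality, writing $R/{\rm J}(R)\cong\prod_i{\rm M}_{n_i}(D_i)$ with possible $n_i\ge 2$, observes that each $D_i$ has torsion unit group and is therefore periodic, and then invokes Corollary~\ref{matrix1} (periodicity of matrix rings over periodic rings, via the PI/weakly $2$-primal machinery) to conclude each ${\rm M}_{n_i}(D_i)$ is periodic. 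Your approach is more elementary in that it avoids the matrix-periodicity result entirely, at the cost of the extra (and, as you note, slightly delicate) step verifying that nilpotents of $R/{\rm J}(R)$ lift to nilpotents of $R$ when ${\rm J}(R)$ is nil, forcing the quotient to be reduced. Both routes are clean; yours trades an external lemma for a small structural observation.
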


\begin{proof}
If $R$ is unit semi-nil clean, in view of Remark~\ref{ch>0} alluded to above, $R$ is $\pi$-UU; hence, ${\rm U}(R/{\rm J}(R))$ is torsion. On the other hand, the classical Artin-Wedderburn theorem allows us to write that $$R/{\rm J}(R) \cong \prod_{i=1}^k {\rm M}_{n_i}(D_i)$$ for some division rings $D_i$ and positive integers $n_i$, where $1\leq i\leq k\in \mathbb{N}$. Thus, for each $i$, $D_i$ is a periodic ring, whence each ${\rm M}_{n_i}(D_i)$ is periodic employing Corollary~\ref{matrix1}. Therefore, $R/{\rm J}(R)$ is periodic, and so $R$ is too periodic with the aid of Lemma~\ref{lift}, as pursued.
\end{proof}


\begin{remark}\label{ch2}
If $R$ is a unit semi-nil clean ring and ${\rm U}(R)\cap \Z\neq \{\pm 1\}$, then it follows that $\ch(R)>0$. In fact, choose $k\in {\rm U}(R)\cap \Z$ and write $k=u+v$, where $u\in {\rm P}(R)$ and $v\in {\rm Nil}(R)$. If $R$ is of zero characteristic, by similar arguments as in the proof of Lemma~\ref{center1}, we arrive at the contradiction $|k|\leq 1$. Hence, the characteristic of $R$ is positive, indeed.
\end{remark}


We now have all the ingredients necessary to state and prove the next interesting result.

\begin{theorem}\label{algebraic2}
Suppose that $R$ is an algebraic algebra over a field. Then, $R$ is unit semi-nil clean if and only if $R$ is  periodic, if and only if ${\rm U}(R)$ is torsion.
\end{theorem}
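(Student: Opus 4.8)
The plan is to prove Theorem~\ref{algebraic2} by reducing to the semisimple case and exploiting the algebraicity to control the structure of the quotient $R/{\rm J}(R)$. The only implication requiring work is ``unit semi-nil clean $\Rightarrow$ periodic'' (the reverse is trivial, since a periodic ring has ${\rm U}(R)\subseteq {\rm P}(R)\subseteq {\rm P}(R)+{\rm Nil}(R)$, and ``${\rm U}(R)$ torsion'' is equivalent to ``periodic'' for such rings by an argument like the one closing the proof of Proposition~\ref{pi-UU}). So assume $R$ is an algebraic algebra over a field $F$ which is unit semi-nil clean.

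First I would pin down the characteristic. If $F$ has positive characteristic, then so does $R$ and we are in good shape immediately. If $\ch(F)=0$, then $F$ contains $\Z$, so ${\rm U}(R)\cap\Z$ contains $2$ (being a unit in $F\subseteq R$), hence ${\rm U}(R)\cap\Z\neq\{\pm1\}$; Remark~\ref{ch2} then forces $\ch(R)>0$, a contradiction. Therefore $\ch(R)=p>0$. Next, recall that for an algebraic algebra over a field, the Jacobson radical ${\rm J}(R)$ is a nil ideal (this is a classical fact — an algebraic algebra has no nonzero nil-free radical; alternatively it follows because ${\rm J}(R)$ is itself an algebraic nil algebra over $F$ by Jacobson's results, or one can cite that algebraic algebras are strongly $\pi$-regular modulo the nilradical). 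By Lemma~\ref{lift}(1), $R$ is periodic if and only if $R/{\rm J}(R)$ is periodic, and $R/{\rm J}(R)$ inherits being unit semi-nil clean (it is a homomorphic image) and being an algebraic $F$-algebra. So we may assume $R$ is semiprimitive.

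Now for a semiprimitive algebraic $F$-algebra $R$, I would invoke the structure theory: $R$ is a subdirect product of primitive algebraic $F$-algebras $R_\alpha$, each of which is again unit semi-nil clean as a homomorphic image. It suffices to show each $R_\alpha$ is periodic (being periodic passes to subdirect products here because each element, lying in the product, is periodic coordinatewise with a uniform bound — actually one must be slightly careful; the cleanest route is to show each $R_\alpha$ is a \emph{field}, since then the subdirect product is a commutative reduced ring and one argues via ${\rm U}(R)$ being torsion, or directly: a commutative semiprimitive algebraic $F$-algebra is a subdirect product of algebraic field extensions of $F$, and unit semi-nil clean forces each such extension to be algebraic over $\mathbb F_p$, i.e.\ locally finite, hence periodic). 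For a primitive algebraic $F$-algebra, Jacobson's density theorem together with the Kaplansky–Jacobson structure theorem for primitive algebraic algebras gives that $R_\alpha$ acts densely on a vector space over a division algebra $D$ which is itself algebraic over $F$; if the dimension is $\geq 2$, then ${\rm M}_2(D)$ is a homomorphic image of a subalgebra, and ${\rm M}_2(D)$ contains a non-torsion unit (for instance a unipotent $\begin{pmatrix}1 & 1\\ 0 & 1\end{pmatrix}$ has infinite order when $\ch = 0$, but here $\ch=p$ — so instead use that ${\rm M}_2(D)$ contains elements whose powers are never unipotent, or observe directly that ${\rm M}_2(D)$ is not $\pi$-UU, contradicting Remark~\ref{ch>0}). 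Hence $R_\alpha\cong D$ is a division algebra, algebraic over $F$, with $D$ unit semi-nil clean; then every nonzero element of $D$ is a unit, hence by unit semi-nil clean plus the fact that ${\rm Nil}(D)=0$, every nonzero element is periodic, so $D$ is a periodic division ring, whence $D$ is a (commutative, locally finite) field by Jacobson's commutativity theorem.

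The main obstacle I anticipate is the passage from ``each primitive subdirectand $R_\alpha$ is periodic'' back to ``$R$ is periodic'': periodicity is not generally preserved under infinite subdirect products because the period (the pair $m,n$ with $x^m=x^n$) need not be bounded. The way around this is to not argue elementwise but structurally: show that in the semiprimitive case $R$ is actually \emph{commutative} (each $R_\alpha$ is a field, so $R$, as a subdirect product of fields, is commutative and reduced), and then a commutative reduced unit semi-nil clean algebraic $F$-algebra has ${\rm Nil}(R)=0$ so every element $x$ satisfies $x=u+0$ with $u$ periodic — wait, that only covers units; the non-unit elements need the algebraicity: $x$ satisfies a polynomial over $F$, and in a commutative reduced ring $x$ lies in $x+{\rm J}=x$, and one uses that the $F$-subalgebra $F[x]$ is finite-dimensional, commutative, reduced, with ${\rm U}(F[x])$ torsion (inherited), forcing $F[x]$ to be a finite product of finite fields, hence $x$ is periodic. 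This local finite-dimensionality argument, feeding off the hypothesis ``algebraic algebra,'' is exactly what sidesteps the unboundedness issue, and getting that bookkeeping right is the delicate part; everything else is an assembly of Corollary~\ref{matrix1}, Remark~\ref{ch>0}, Remark~\ref{ch2}, Lemma~\ref{lift}, and standard structure theory.
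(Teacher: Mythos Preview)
Your approach has a genuine gap, and it also misses a much shorter route that the paper takes.

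\textbf{The gap.} In the primitive-quotient step you try to force each $R_\alpha$ to be a division ring by ruling out ${\rm M}_2(D)$. But your candidate obstruction (``${\rm M}_2(D)$ is not $\pi$-UU, contradicting Remark~\ref{ch>0}'') does not work: Remark~\ref{ch>0} requires the NI hypothesis, and ${\rm M}_2(D)$ is never NI. Worse, in characteristic $p$ with $D$ a locally finite field, ${\rm M}_2(D)$ \emph{is} periodic (Corollary~\ref{matrix1}), hence unit semi-nil clean and $\pi$-UU, so no argument of this flavour can exclude it. Consequently your reduction to ``$R$ semiprimitive $\Rightarrow$ $R$ commutative reduced'' collapses, and the final $F[x]$ argument---which you set up only in that reduced case---is left without its hypotheses. (Even there, ``${\rm U}(F[x])$ torsion, inherited'' presumes ${\rm U}(R)$ torsion, which is precisely one of the equivalent conclusions you are trying to prove.)

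\textbf{What the paper does instead.} After obtaining $\ch(R)=p>0$ via Remark~\ref{ch2} (as you do), the paper observes that every $\lambda\in F^{*}$ is a \emph{central} unit; writing $\lambda=a+b$ with $a$ periodic and $b$ nilpotent, centrality of $\lambda$ forces $ab=ba$, and since nilpotents are trivially periodic, Lemma~\ref{center0} gives that $\lambda$ itself is periodic. Thus $F$ is periodic, hence algebraic over its prime subfield $\mathbb{F}_p$, and therefore $R$ is algebraic over $\mathbb{F}_p$. Now for any $\alpha\in R$ the subring $\mathbb{F}_p[\alpha]$ is \emph{finite}, so $\alpha$ is periodic. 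No Jacobson radical, no subdirect products, no density theorem are needed. Your closing ``local finite-dimensionality'' remark is groping toward exactly this, but the missing ingredient is the one-line argument that $F^{*}$ is torsion, which lets you work over $\mathbb{F}_p$ rather than over $F$.
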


\begin{proof}
Let $F$ be a field, and let $R$ be an algebraic $F$-algebra. Assume that $R$ is unit semi-nil clean. Now, Remark~\ref{ch2} is applicable to get that $\text{char}(R) > 0$. Consequently, thanking Lemma~\ref{center0}, we can infer that $F$ is a periodic ring.

However, $F$ is algebraic over its prime subfield $\mathbb{F}_p$, where $p=\text{char}(F)>0$, and so $R$ is algebraic over $\mathbb{F}_p$. Next, for each $\alpha \in R$, the subring $\mathbb{F}_p[\alpha]$ of $R$ generated by $\alpha$ over $\mathbb{F}_p$ is, certainly, finite. Therefore, $\alpha$ is a periodic element, proving that $R$ is periodic too.
\end{proof}


Suppose that $R$ is a unit semi-nil clean local ring. If $2\in {\rm J}(R)$, then $3\in {\rm U}(R) \cap \mathbb{Z}$; otherwise, $2\in {\rm U}(R) \cap \mathbb{Z}$. Now, Remark~\ref{ch2} gives that $\text{char}(R)>0$. Also, if $R$ is a UNC ring, then $2\in {\rm J}(R)$ in view of \cite[Lemma~2.4]{UNC}. Thus, again, $\ch(R)>0$. Hence, a similar reason as in the proof of Proposition~\ref{pi-UU} is a guarantor for the validity the following two results.

\begin{proposition}
Let $R$ be a local ring such that  ${\rm J}(R)$ is nil or ${\rm Nil}(R)$ is additively closed. Then, $R$ is a unit semi-nil clean ring if and only if ${\rm U}(R)$ is a torsion group.
\end{proposition}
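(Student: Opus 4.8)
The plan is to reduce the statement to the case of positive characteristic and then imitate the proof of Proposition~\ref{pi-UU} verbatim. The hypothesis is that $R$ is a local ring with either $\mathrm{J}(R)$ nil or $\mathrm{Nil}(R)$ additively closed; note that in a local ring $\mathrm{Nil}(R)\subseteq\mathrm{J}(R)$, so the second alternative together with Theorem~\ref{NI} (applied to the semi-nil clean situation) or, more directly, with the argument of Proposition~\ref{pi-UU}, again yields that $\mathrm{J}(R)$ is nil; thus in all cases we may and do assume $\mathrm{J}(R)=\mathrm{Nil}(R)$ is nil. Since a local ring is in particular an NI ring, Remark~\ref{ch>0} then tells us that, once we know $R$ is unit semi-nil clean, it is automatically $\pi$-UU, and the characteristic hypothesis of Proposition~\ref{pi-UU} is not needed.

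First I would dispose of the characteristic. As observed in the paragraph preceding the statement, if $R$ is a unit semi-nil clean local ring then either $2\in\mathrm{U}(R)$ or $2\in\mathrm{J}(R)$ and hence $3=2+1\in\mathrm{U}(R)$; in either case $\mathrm{U}(R)\cap\Z\neq\{\pm1\}$, so Remark~\ref{ch2} forces $\ch(R)>0$. Now the forward implication is exactly Proposition~\ref{pi-UU}: from $\ch(R)>0$, $\mathrm{Nil}(R)$ additively closed (or $\mathrm{J}(R)$ nil) and $R$ unit semi-nil clean we conclude $\mathrm{J}(R)=\mathrm{Nil}(R)$ and $\mathrm{U}(R)$ is torsion. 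So really the content of the proposition beyond Proposition~\ref{pi-UU} is only the bookkeeping that the ``NI'' hypothesis there is automatic for local rings and that the characteristic is automatically positive.

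For the converse, suppose $\mathrm{U}(R)$ is a torsion group; I want to show every unit is a sum of a periodic element and a nilpotent element, which is trivial since every unit is then already periodic (a torsion unit $u$ satisfies $u^n=1$ for some $n$, hence $u^{n+1}=u$, so $u\in\mathrm{P}(R)$ and we may take the nilpotent summand to be $0$). Thus $\mathrm{U}(R)\subseteq\mathrm{P}(R)\subseteq\mathrm{P}(R)+\mathrm{Nil}(R)$, and $R$ is unit semi-nil clean. This direction requires no hypothesis at all and is immediate.

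I do not anticipate a genuine obstacle here: the proposition is explicitly flagged in the text as following ``by a similar reason as in the proof of Proposition~\ref{pi-UU}.'' The only point requiring a little care is justifying that under the alternative hypothesis ``$\mathrm{Nil}(R)$ additively closed'' one still gets $\mathrm{J}(R)$ nil before invoking Remark~\ref{ch>0}; this is handled by running the first paragraph of the proof of Proposition~\ref{pi-UU} (or Theorem~\ref{NI}) with $R$ viewed as a semi-nil clean-at-units local ring, using that some power of the periodic summand is idempotent by \cite[Lemma~5.2]{Ye} and that the only idempotents in a local ring are $0$ and $1$. With that in hand the whole argument is a two-line assembly of already-proved facts.
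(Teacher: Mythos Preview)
Your proposal is correct and mirrors exactly what the paper intends: the paper gives no separate proof, only the paragraph preceding the proposition which reduces to $\ch(R)>0$ via Remark~\ref{ch2} and then invokes ``a similar reason as in the proof of Proposition~\ref{pi-UU}''. One phrasing to tighten: local rings are \emph{not} NI in general---what you actually use is that once $\mathrm{J}(R)=\mathrm{Nil}(R)$ has been established (which you do, via the first paragraph of Proposition~\ref{pi-UU} after securing $\ch(R)>0$), $\mathrm{Nil}(R)$ is an ideal, so $R$ is NI and Remark~\ref{ch>0} applies.
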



\begin{proposition}
An NI ring $R$ is a UNC ring if and only if ${\rm J}(R) = {\rm Nil}(R)$ and $R$ is a UU ring.
\end{proposition}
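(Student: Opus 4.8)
The plan is to prove both implications directly, paralleling the proof of Proposition~\ref{pi-UU}, but exploiting the fact that idempotents are far more rigid than general periodic elements, so that no hypothesis on the characteristic is needed here. The one structural ingredient is the NI assumption, which makes ${\rm Nil}(R)$ a two-sided ideal and so permits passage to the quotient $\bar R:=R/{\rm Nil}(R)$; throughout I would use the two elementary facts that in any ring an idempotent which happens to be a unit must equal $1$, and an idempotent which happens to be nilpotent must equal $0$.

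For the forward implication, assume $R$ is an NI ring that is UNC. First I would show $R$ is a UU ring. Since the inclusion $\mathfrak{u}(R)\subseteq {\rm U}(R)$ is automatic, take $u\in {\rm U}(R)$ and, by UNC, write $u=e+c$ with $e\in {\rm Id}(R)$ and $c\in {\rm Nil}(R)$. Reducing modulo ${\rm Nil}(R)$ gives $\bar e=\bar u$, which is a unit of $\bar R$; being also idempotent, it equals $\bar 1$, so $1-e\in {\rm Nil}(R)$. But $1-e$ is an idempotent, hence $1-e=0$, i.e.\ $u=1+c\in \mathfrak{u}(R)$. Thus ${\rm U}(R)=\mathfrak{u}(R)$. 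Next I would establish ${\rm J}(R)={\rm Nil}(R)$: the inclusion ${\rm Nil}(R)\subseteq {\rm J}(R)$ holds because ${\rm Nil}(R)$ is a nil ideal of the NI ring $R$, while for $x\in {\rm J}(R)$ we have $1-x\in {\rm U}(R)=\mathfrak{u}(R)$, so $-x=(1-x)-1\in {\rm Nil}(R)$ and hence $x\in {\rm Nil}(R)$.

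For the converse, assume ${\rm J}(R)={\rm Nil}(R)$ and $R$ is a UU ring. Then every $u\in {\rm U}(R)=\mathfrak{u}(R)$ satisfies $u-1\in {\rm Nil}(R)$, so $u=1+(u-1)\in {\rm Id}(R)+{\rm Nil}(R)$; therefore $R$ is UNC. (The condition ${\rm J}(R)={\rm Nil}(R)$ is not actually used in this direction — it is recorded only as part of the equivalent package delivered by the forward implication.)

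I do not anticipate a genuine obstacle here: the only points requiring care are that the reduction modulo ${\rm Nil}(R)$ is legitimate, which is precisely what the NI hypothesis supplies, and the minor bookkeeping of working with $1-e$ rather than $e-1$ when forcing an idempotent that lies in ${\rm Nil}(R)$ to vanish. The contrast with Proposition~\ref{pi-UU} is exactly that there the element supplied by the \emph{unit semi-nil clean} condition is only periodic, so its image in $R/{\rm Nil}(R)$ is merely a torsion unit and need not be $\bar 1$; that is what forces the more delicate passage through $a'=1-a$ and the use of positive characteristic, whereas idempotency collapses that step entirely.
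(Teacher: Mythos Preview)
Your proof is correct. The paper does not give a full argument for this proposition, merely pointing to ``a similar reason as in the proof of Proposition~\ref{pi-UU}''; tracing that route literally would mean showing $y-y^2\in{\rm Nil}(R)$ for $y\in{\rm U}(R)$ and then invoking \cite[Lemma~3.5]{YKZ} to obtain a \emph{commuting} idempotent--nilpotent decomposition before concluding $e=1$. Your quotient argument in $\bar R=R/{\rm Nil}(R)$ bypasses that external lemma entirely: since $\bar e=\bar u$ is an idempotent unit it equals $\bar 1$, and then $1-e$ is a nilpotent idempotent, hence zero. This is a genuine simplification that exploits exactly the rigidity of idempotents you highlight, and your ordering (establish UU first, then deduce ${\rm J}(R)\subseteq{\rm Nil}(R)$ from it) is cleaner than mimicking the ${\rm J}(R)$-is-nil step of Proposition~\ref{pi-UU}. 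Your remark that neither ${\rm J}(R)={\rm Nil}(R)$ nor the NI hypothesis is needed for the converse is also accurate.
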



The following technical claim is very similar to that of Lemma~\ref{lift} and so its proof is eliminated.

\begin{lemma}\label{lift2}
Suppose that $R$ and $S$ are two rings, and let $I$ be a nil-ideal of $R$. Then, the following two points hold:
\begin{itemize}
\item [(1)] If $R$ is a unit semi-nil clean ring, then $R/I$ is as well. Conversely, if $\operatorname{char}(R) > 0$ and $R/I$ is a unit semi-nil clean ring, then $R$ is so.
\item [(2)] $R$ and $S$ are unit semi-nil clean if and only if $R \times S$ is unit semi-nil clean.
\end{itemize}
\end{lemma}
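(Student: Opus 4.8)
The plan is to adapt the proof strategy of Lemma~\ref{lift}, whose analogue for the ordinary semi-nil clean property we may invoke freely, together with the same lifting-of-periodic/unipotent-elements machinery that underpins Lemma~\ref{center1}. For part (1), the forward direction is routine: if $R$ is unit semi-nil clean and $I$ is a nil-ideal, then any $\bar u \in {\rm U}(R/I)$ lifts to a unit $u \in {\rm U}(R)$ (units lift modulo nil ideals), write $u = a + b$ with $a \in {\rm P}(R)$, $b \in {\rm Nil}(R)$, and project: $\bar a$ is periodic and $\bar b$ is nilpotent, so $\bar u = \bar a + \bar b$ is semi-nil clean in $R/I$. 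The converse is the substantive half. Here I would assume $\operatorname{char}(R) = m > 0$ and $R/I$ unit semi-nil clean, take $u \in {\rm U}(R)$, and work with $\bar u \in {\rm U}(R/I)$, writing $\bar u = \bar a + \bar c$ with $\bar a$ periodic and $\bar c$ nilpotent in $R/I$. Lift $\bar a$ to $a \in R$; then $\bar a^s$ is idempotent in $R/I$ for some $s$ (by \cite[Lemma~5.2]{Ye}), and since $I$ is nil, idempotents lift, so we may arrange $a^s$ to be an actual idempotent $e \in R$ with $\bar e = \bar a^s$. The remainder $u - a$ maps into ${\rm Nil}(R/I)$, hence $(u-a)^t \in I$ for some $t$, and $I$ nil forces $u - a \in {\rm Nil}(R)$. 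So it remains only to show $a \in {\rm P}(R)$, knowing $a^s = e$ is idempotent and $\bar a \in {\rm P}(R/I)$.

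To pin down that $a$ is periodic, the idea is the same as in the proof of Theorem~\ref{NI}: since $e = a^s$ is idempotent, the element $a$ restricts to a unit on $eRe$ (after checking $a$ commutes with $e$, or passing to $a^{s}$ and using that $\bar a$ is a unit in $\bar e(R/I)\bar e$ times a nilpotent correction). More directly, because $\bar a$ is periodic modulo the nil ideal $I$ and $\operatorname{char}(R) = m > 0$, one can argue as in Lemma~\ref{center1}: the unipotent part is killed by taking $m^{j}$-th powers via \cite[Theorem~3.2]{Lam-Danchev}, and a periodic element that becomes a genuine idempotent power lies in ${\rm P}(R)$. Concretely, $\bar a^{N} = \bar a^{N+\ell}$ in $R/I$ for suitable $N,\ell$; lifting, $a^{N} - a^{N+\ell} \in I$ is nilpotent, and combining with $a^{s}$ idempotent one deduces $a^{N} - a^{N+\ell'}$ is nilpotent for a multiple $\ell'$, whence $a^{M}$ is idempotent-plus-nilpotent with commuting parts, and since $\operatorname{char}(R)>0$, Lemma~\ref{center0} (or the strongly-nil-clean-implies-periodic step of Theorem~\ref{NI}, via \cite[Lemma~3.5]{YKZ} and \cite[Theorem~3.4]{Cui}) gives $a \in {\rm P}(R)$. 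Then $u = a + (u-a)$ exhibits $u$ as semi-nil clean, as needed. Note the hypothesis $\operatorname{char}(R) > 0$ is genuinely used precisely at the point where unipotents must be absorbed into periodics, exactly as in Lemma~\ref{center1}; this is why it cannot be dropped (and is the content of the open Problem~\ref{3} preceding this lemma).

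For part (2), this is the straightforward product decomposition argument, identical in spirit to the second half of Lemma~\ref{lift}: an element $(r,s) \in {\rm U}(R\times S) = {\rm U}(R)\times{\rm U}(S)$, and one has ${\rm P}(R\times S) = {\rm P}(R)\times{\rm P}(S)$ and ${\rm Nil}(R\times S) = {\rm Nil}(R)\times{\rm Nil}(S)$, so a semi-nil clean decomposition of $(r,s)$ is the same data as semi-nil clean decompositions of $r$ and of $s$ coordinatewise. Hence $R\times S$ is unit semi-nil clean iff both factors are. No characteristic hypothesis is needed here because no lifting is involved.

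I expect the main obstacle to be the converse in part~(1): specifically, the careful bookkeeping that turns ``$\bar a$ is periodic in $R/I$ and some power of $a$ is idempotent in $R$'' into ``$a$ is periodic in $R$,'' which requires invoking the chain of results \cite[Lemma~5.2]{Ye}, \cite[Lemma~3.5]{YKZ}, \cite[Theorem~3.4]{Cui}, and \cite[Theorem~3.2]{Lam-Danchev} in the right order, together with the positive-characteristic hypothesis to handle the unipotent part. Since the statement explicitly says the proof is ``eliminated'' as being very similar to Lemma~\ref{lift}, in practice one would simply cite the analogue of that lemma and the proof of Proposition~\ref{pi-UU}, remarking that units lift modulo nil ideals and that the argument of Theorem~\ref{NI} applies verbatim to the unit $u$ in place of a general element.
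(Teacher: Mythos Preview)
Your plan is sound and aligned with what the paper intends (the paper omits the proof entirely, pointing to Lemma~\ref{lift}). The forward direction of (1) and all of (2) are routine and correct as you describe. For the converse of (1), however, there is one genuine slip: having chosen an arbitrary lift $a$ of $\bar a$, you cannot ``arrange $a^s$ to be an actual idempotent'' via idempotent lifting. Idempotent lifting produces \emph{some} idempotent $e\in R$ with $\bar e=\bar a^{\,s}$, but there is no reason $e=a^s$ for your already-fixed $a$; the subsequent clause ``combining with $a^s$ idempotent'' is therefore unjustified.

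Fortunately your parenthetical alternative is exactly the right repair. From $a^{N}-a^{N+\ell}\in I$ nilpotent, the powers $a^{N}$ and $a^{N+\ell}$ commute, so in the commutative subring generated by $a$ one has $\bar a$ periodic modulo the nilradical; by \cite[Lemma~5.2]{Ye} some $\bar a^{\,k}$ is idempotent there, i.e.\ $a^{k}-a^{2k}$ is nilpotent in $R$. Now \cite[Lemma~3.5]{YKZ} gives $a^{k}=e+c$ with $e$ idempotent, $c$ nilpotent, $ec=ce$ and $e,c\in\mathbb{Z}[a]$; since $\operatorname{char}(R)>0$, the unipotent $e+ec$ in $eRe$ is torsion by \cite[Theorem~3.2]{Lam-Danchev}, whence \cite[Theorem~3.4]{Cui} (exactly as in Theorem~\ref{NI}) yields $a\in{\rm P}(R)$. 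The simplest route of all, and presumably what the paper has in mind by ``similar to Lemma~\ref{lift}'', is to bypass this entirely and invoke \cite[Theorem~5]{lift} to lift $\bar a$ directly to a periodic element of $R$; the hypothesis $\operatorname{char}(R)>0$ then plays precisely the role that Lemma~\ref{center1} plays in Lemma~\ref{lift} (automatic there, assumed here).
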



We now proceed by proving the following.

\begin{proposition}
Let $R$ be a weakly $2$-primal ring.
\begin{itemize}
\item [(i)] If $\ch(R)>0$, then the polynomial ring $R[t]$ is a unit semi-nil clean ring if and only if $R$ is a unit semi-nil clean ring.
\item [(ii)] The polynomial ring $R[t]$ is a UNC ring if and only if $R$ is a UNC ring.
\end{itemize}
\end{proposition}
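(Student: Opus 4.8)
The plan is to reduce both assertions to the behaviour of units, nilpotents, idempotents and periodic elements under the inclusion $R\hookrightarrow R[t]$ and the evaluation $\varepsilon\colon R[t]\to R$, $\varepsilon(t)=0$, once ${\rm Nil}(R[t])$ and the units of the nil-reduction of $R[t]$ have been identified. The crucial structural input is this: since $R$ is weakly $2$-primal, ${\rm Nil}(R)$ equals the Levitzki radical $L(R)$, hence is a locally nilpotent ideal, so ${\rm Nil}(R)[t]=L(R)[t]$ is a locally nilpotent (in particular nil) ideal of $R[t]$; moreover $R$ is then NI, so $R/{\rm Nil}(R)$ is reduced, and therefore $(R/{\rm Nil}(R))[t]\cong R[t]/{\rm Nil}(R)[t]$ is reduced. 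Combining these, ${\rm Nil}(R[t])={\rm Nil}(R)[t]$ (so $R[t]$ is again weakly $2$-primal), and since a reduced ring is a subdirect product of domains while a polynomial ring over a domain has only constant units, ${\rm U}\big((R/{\rm Nil}(R))[t]\big)={\rm U}(R/{\rm Nil}(R))$. In short: passing to $R[t]$ creates no new nilpotents, and no new units over the reduced quotient.

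With this in place, the implications ``$R[t]$ unit semi-nil clean (resp.\ UNC) $\Rightarrow$ $R$ is so'' are immediate and need no hypothesis: given $u\in{\rm U}(R)\subseteq{\rm U}(R[t])$, write $u=a(t)+b(t)$ with $a(t)$ periodic (resp.\ idempotent) in $R[t]$ and $b(t)\in{\rm Nil}(R[t])={\rm Nil}(R)[t]$; applying $\varepsilon$ yields $u=a(0)+b(0)$ with $a(0)\in{\rm P}(R)$ (resp.\ ${\rm Id}(R)$), because ring homomorphisms preserve periodic and idempotent elements, and with $b(0)\in{\rm Nil}(R)$. This disposes of $(\Rightarrow)$ in (i) and of one direction in (ii).

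For the converse in (i), assume $R$ is unit semi-nil clean. By Lemma~\ref{lift2}(1), $R/{\rm Nil}(R)$ is unit semi-nil clean, which for the reduced ring $R/{\rm Nil}(R)$ simply means ${\rm U}(R/{\rm Nil}(R))$ is torsion (a periodic unit of a reduced ring is torsion). By the structural fact above, ${\rm U}\big((R/{\rm Nil}(R))[t]\big)={\rm U}(R/{\rm Nil}(R))$ is torsion, so $(R/{\rm Nil}(R))[t]\cong R[t]/{\rm Nil}(R)[t]$ is unit semi-nil clean; since ${\rm Nil}(R)[t]$ is nil and $\ch(R[t])=\ch(R)>0$, Lemma~\ref{lift2}(1) lifts this back to $R[t]$ — this is exactly the step where positive characteristic is used. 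For the converse in (ii) one argues directly, there being no ready-made lifting lemma for UNC rings: for $u\in{\rm U}(R[t])$, its image in the reduced ring $R[t]/{\rm Nil}(R)[t]$ is a constant by the structural fact, so $u-a_0\in{\rm Nil}(R)[t]$ where $a_0$ is the constant term of $u$; here $a_0\in{\rm U}(R)$ since units lift modulo the nil ideal ${\rm Nil}(R)$, hence $a_0=e+q$ with $e\in{\rm Id}(R)\subseteq{\rm Id}(R[t])$ and $q\in{\rm Nil}(R)$, and then $u=e+\big(q+(u-a_0)\big)$ with $q+(u-a_0)\in{\rm Nil}(R)[t]={\rm Nil}(R[t])$, so $R[t]$ is UNC. (The same direct computation with ``idempotent'' replaced by ``periodic'' also settles $(\Leftarrow)$ in (i) without any use of the characteristic; the hypothesis $\ch(R)>0$ is needed only to route through Lemma~\ref{lift2}.)

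The only non-formal ingredient — and hence the main obstacle — is the identification in the first paragraph: that weak $2$-primality is inherited by $R[t]$ with ${\rm Nil}(R[t])={\rm Nil}(R)[t]$, and that the reduced quotient picks up no new units in its polynomial extension. These rely on $L(R[t])=L(R)[t]$ (equivalently, on the nilpotence of $N[t]$ for a nilpotent subring $N$ together with the standard Levitzki-radical computation for polynomial rings), on reducedness of a polynomial ring over a reduced ring, and on the subdirect-product-of-domains description of reduced rings. Everything downstream is routine bookkeeping with the maps $R\hookrightarrow R[t]\twoheadrightarrow R$, the fact that ${\rm Nil}(R)[t]$ is a (two-sided) ideal, and Lemma~\ref{lift2}; the weak $2$-primality hypothesis is precisely what is used where it is used, namely to make ${\rm Nil}(R)$ an ideal and to make it locally nilpotent rather than merely an additive subgroup.
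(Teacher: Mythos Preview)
Your proof is correct and follows essentially the same strategy as the paper: identify ${\rm Nil}(R[t])={\rm Nil}(R)[t]$ via the weakly $2$-primal hypothesis, observe that the reduced quotient $(R/{\rm Nil}(R))[t]$ has no non-constant units, and then lift back through the nil ideal. The paper handles the forward direction via $R[t]/\langle t\rangle\cong R$ (which is your evaluation map $\varepsilon$) and, for the converses, invokes Lemma~\ref{lift2} together with the external reference \cite[Theorem~2.5]{UNC} for the UNC lifting; you instead supply the direct decomposition $u=e+\big(q+(u-a_0)\big)$ with $q+(u-a_0)\in{\rm Nil}(R)[t]$.

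One point worth flagging: your parenthetical observation that the same direct computation proves $(\Leftarrow)$ in (i) \emph{without} the hypothesis $\ch(R)>0$ is correct, and is a genuine (if minor) sharpening of the paper's statement. The paper's route through Lemma~\ref{lift2}(1) does require positive characteristic for the lifting direction, but your explicit decomposition bypasses that lemma entirely, since $p\in{\rm P}(R)\subseteq{\rm P}(R[t])$ and the nilpotent part lands in the ideal ${\rm Nil}(R)[t]$ regardless of characteristic. So your approach is both more self-contained (no appeal to \cite{UNC} or \cite{Leroy}) and slightly stronger.
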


\begin{proof}
If $R[t]$ is unit semi-nil clean (resp., UNC), then so is $R[t]/\langle t\rangle \cong R$, as it is well known that the units of $R[t]/\langle t\rangle$ are lifted to the units of $R[t]$.

Reciprocally, suppose that $R$ is a unit semi-nil clean (resp., UNC) ring. Since the quotient-ring $R/{\rm Nil}(R)$ is a reduced ring, we have $${\rm U}\left(\frac{R}{{\rm Nil}(R)}[t]\right)={\rm U}\left(\frac{R}{{\rm Nil}(R)}\right)$$ looking at \cite[Corollary~1.7]{Leroy}. Therefore, using Lemma~\ref{lift2} (resp., \cite[Theorem~2.5]{UNC}), we can deduce that $$\frac{R[t]}{{\rm Nil}(R)[t]}\cong\frac{R}{{\rm Nil}(R)}[t]$$ is unit semi-nil clean (resp., UNC). Furthermore, since the ideal ${\rm Nil}(R)$ is locally nilpotent, ${\rm Nil}(R)[t]$ is a nil-ideal of $R[t]$. With another application of Lemma~\ref{lift2} (resp., \cite[Theorem~2.5]{UNC}), we can conclude that $R[t]$ is a unit semi-nil clean (resp., UNC) ring, as expected.
\end{proof}


We can now state without proof the following generalization of \cite[Corollary~2.12]{UNC}.

\begin{corollary}
A weakly $2$-primal ring $R$ is a UNC ring if and only if $R[t]$ is a UNC ring, if and only if ${\rm J}(R)={\rm Nil}(R)$ and ${\rm U}(R)=1+{\rm J}(R)$.
\end{corollary}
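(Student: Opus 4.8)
The plan is to read this corollary off from the two Propositions established immediately above, so that essentially no new idea is required -- only a little bookkeeping. The first step I would take is to record that a weakly $2$-primal ring $R$ is automatically an NI ring: by definition ${\rm Nil}(R)$ equals the Levitzki radical of $R$, and the Levitzki radical is always a two-sided ideal. Hence the Proposition characterizing UNC NI rings (``an NI ring $R$ is UNC iff ${\rm J}(R)={\rm Nil}(R)$ and $R$ is UU'') applies verbatim, and it tells us that $R$ is a UNC ring precisely when ${\rm J}(R)={\rm Nil}(R)$ and $R$ is a UU ring.

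The second step is to translate the UU condition into the displayed statement about the unit group. By the very meaning of ``unipotent'' one has $\mathfrak{u}(R)=1+{\rm Nil}(R)$, so the requirement ${\rm U}(R)=\mathfrak{u}(R)$ coincides with ${\rm U}(R)=1+{\rm Nil}(R)$; once ${\rm J}(R)={\rm Nil}(R)$ is known this reads ${\rm U}(R)=1+{\rm J}(R)$, and conversely ${\rm J}(R)={\rm Nil}(R)$ together with ${\rm U}(R)=1+{\rm J}(R)$ gives back ${\rm U}(R)=1+{\rm Nil}(R)=\mathfrak{u}(R)$. Combining this with the first step, I obtain
$$R \text{ is a UNC ring} \iff {\rm J}(R)={\rm Nil}(R)\ \text{ and }\ {\rm U}(R)=1+{\rm J}(R).$$

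For the remaining equivalence I would simply invoke part~(ii) of the immediately preceding Proposition on polynomial rings, which asserts that $R[t]$ is a UNC ring if and only if $R$ is a UNC ring for every weakly $2$-primal $R$; note that, unlike part~(i), this half carries no hypothesis on $\ch(R)$, so it applies here unchanged. Chaining the three equivalences then yields the corollary. I do not anticipate a genuine obstacle, since all the substance already lies in the two cited Propositions; the only points to watch are the dictionary between ``$R$ is a UU ring'' and ``${\rm U}(R)=1+{\rm J}(R)$'' under the standing assumption ${\rm J}(R)={\rm Nil}(R)$, and the fact that weak $2$-primality is exactly what licenses the use of those Propositions. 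If a fully self-contained account were wanted, the only extra work would be to reprove the NI-characterization, which itself follows the pattern of the proof of Proposition~\ref{pi-UU}.
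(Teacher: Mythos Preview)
Your proposal is correct and is exactly the derivation the paper has in mind: the authors state the corollary ``without proof,'' intending it to be read off from Proposition~(ii) on polynomial rings together with the NI-characterization of UNC rings, precisely as you do. The only bookkeeping is the observation that weakly $2$-primal implies NI and the translation of the UU condition into ${\rm U}(R)=1+{\rm J}(R)$ under ${\rm J}(R)={\rm Nil}(R)$, both of which you handle correctly.
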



\begin{remark}
If $R$ is any ring of positive characteristic, then the ring $R[[t]]$ of formal power series is definitely \textit{not} unit semi-nil clean, as the central element $1+t\in {\rm U}(R[[t]])$ (and hence $t$) would be a periodic element, which is impossible.
\end{remark}


The next assertion is a plain consequence of our previous results, so that we can omit its proof.

\begin{corollary}
Let $R$ and $S$ be two rings of positive characteristic, $n \geq 2$, and let $M$ be an $(R,S)$-bi-module. Then, the following three statements are equivalent:
\begin{itemize}
\item [(1)] $R$ and $S$ are unit semi-nil clean rings.
\item [(2)] ${\rm T}(R, S,M)$ and ${\rm T}_n(R)$ are unit semi-nil clean rings.
\item [(3)] $R[t]/\langle t^n\rangle$ is a unit semi-nil clean ring.
\end{itemize}
\end{corollary}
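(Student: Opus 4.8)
The plan is to prove the three-way equivalence by chaining together the lifting lemma for nil-ideals (Lemma~\ref{lift2}), the triangular-matrix structure, and the isomorphism $R[t]/\langle t^n\rangle \cong$ a suitable matrix/quotient ring. First I would observe that ${\rm T}_n(R)$ is a special case of ${\rm T}(R,S,M)$ iteratively (with $S = {\rm T}_{n-1}(R)$ and $M = R^{n-1}$ as an $(R, {\rm T}_{n-1}(R))$-bimodule), so it suffices to handle ${\rm T}(R,S,M)$ and then induct. Also $R[t]/\langle t^n\rangle$ is exactly the subring of ${\rm M}_n(R)$ consisting of matrices constant along diagonals, but more usefully for us it is the ring whose only non-nilpotent part modulo its nilradical is a copy of $R$; concretely, the ideal $I = \langle t\rangle/\langle t^n\rangle$ is nilpotent with $\bigl(R[t]/\langle t^n\rangle\bigr)/I \cong R$.

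The key steps, in order, are as follows. For $(1)\Leftrightarrow(2)$: the set $N$ of strictly-upper-triangular matrices is a nilpotent (hence nil) ideal of ${\rm T}(R,S,M)$ with quotient $R \times S$; since both $R$ and $S$ have positive characteristic, so does $R\times S$ and so does ${\rm T}(R,S,M)$. Thus Lemma~\ref{lift2}(1) applies in both directions: ${\rm T}(R,S,M)$ is unit semi-nil clean iff $R\times S$ is, and by Lemma~\ref{lift2}(2) that holds iff both $R$ and $S$ are. An easy induction on $n$ then transfers the statement to ${\rm T}_n(R)$, the base case $n=1$ being trivial and the inductive step using $S = {\rm T}_{n-1}(R)$ which again has positive characteristic. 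For $(1)\Leftrightarrow(3)$: the quotient $R[t]/\langle t^n\rangle$ has the nilpotent ideal $I$ described above with $\bigl(R[t]/\langle t^n\rangle\bigr)/I \cong R$; its characteristic equals $\ch(R)>0$, so Lemma~\ref{lift2}(1) again gives equivalence in both directions with unit semi-nil cleanness of $R$, and hence with $(1)$ for $n\geq 2$ (or for $n=1$ if one interprets the single-ring case as $R$ itself).

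The main obstacle I anticipate is purely bookkeeping about the "positive characteristic" hypothesis: the converse direction of Lemma~\ref{lift2}(1) needs $\ch$ of the \emph{big} ring to be positive, so I must make sure at each application that ${\rm T}(R,S,M)$, ${\rm T}_n(R)$, and $R[t]/\langle t^n\rangle$ all inherit positive characteristic from $R$ (and $S$). This is immediate since each of these contains $R\cdot 1$ as a unital subring, so $\ch$ of the larger ring divides $\ch(R)\cdot\mathrm{lcm}$-type quantities and in any case is bounded by (a power of) $\ch(R)$; in fact $\ch({\rm T}_n(R)) = \ch(R)$ and $\ch(R[t]/\langle t^n\rangle)=\ch(R)$ exactly. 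Beyond that caveat, the argument is a routine assembly of the cited lemmas, which is precisely why the paper elects to omit it. One should also take a moment to confirm that $N$ (resp.\ $I$) is genuinely a \emph{nil} ideal — it is nilpotent of index $\leq n$ — so that Lemma~\ref{lift2} is literally applicable rather than needing the locally-nilpotent refinement.
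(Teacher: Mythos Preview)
Your approach—reducing each of the three constructions to $R$ (or $R\times S$) modulo a nilpotent ideal and then invoking Lemma~\ref{lift2} in both directions—is exactly what the paper intends; the authors omit the proof as a ``plain consequence'' of that lemma, and your care with the positive-characteristic hypothesis is the only point that really needed checking.

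There is, however, one genuine gap in your chain, and it is not mere bookkeeping: the implication $(3)\Rightarrow(1)$ does not follow as you claim. Condition~(3) involves only the ring $R$, so from $R[t]/\langle t^n\rangle$ being unit semi-nil clean you correctly extract that $R$ is unit semi-nil clean, but nothing in~(3) constrains $S$ at all; your clause ``and hence with~(1)'' is unjustified. Indeed, the three-way equivalence is false as literally stated: take $R=\mathbb{F}_2$ and $S=\mathbb{F}_2[[x]]$ (both of positive characteristic, with $M$ any bimodule); then~(3) holds, yet $S$ is not unit semi-nil clean by the Remark immediately preceding this corollary. What your argument actually—and correctly—establishes is the intended ``respective'' reading: ${\rm T}(R,S,M)$ is unit semi-nil clean iff both $R$ and $S$ are, while each of ${\rm T}_n(R)$ and $R[t]/\langle t^n\rangle$ is unit semi-nil clean iff $R$ alone is.
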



Finally, for  group rings, we state the following partial results.

\begin{proposition}\label{group ring3}
Let $R$ be a ring and let $G$ be a group.
\begin{itemize}
\item [(1)] If $RG$ is unit semi-nil clean, then $R$ is also  unit semi-nil clean. Moreover, ${\rm Z}(G)$ is a locally finite group provided $\operatorname{char}(R) > 0$.
\item [(2)] If $R$ is unit semi-nil clean, $p \in \operatorname{Nil}(R)$, and $G$ is a locally finite $p$-group, where $p$ is a prime, then $RG$ is unit semi-nil clean.
\item[(3)] If $R$ is  right (resp., left) perfect, ${\rm U}(R)$ is torsion, and $G$ is locally finite, then $RG$ is a periodic ring.
\end{itemize}
\end{proposition}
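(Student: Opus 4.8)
The plan is to treat the three parts in turn. For (1), the first assertion is soft: the augmentation map $\varepsilon\colon RG\twoheadrightarrow R$ is a surjective ring homomorphism split by the structural inclusion $R\hookrightarrow RG$, so every $u\in{\rm U}(R)$ lifts to a unit of $RG$; writing such a lift as a periodic element plus a nilpotent element of $RG$ and applying $\varepsilon$ --- which carries ${\rm P}(RG)$ into ${\rm P}(R)$ and ${\rm Nil}(RG)$ into ${\rm Nil}(R)$ --- exhibits $u$ as a periodic plus a nilpotent in $R$. For the ``moreover'', let $g\in{\rm Z}(G)$; since in a group ring the coefficient ring and the group commute, $g$ is central in the ring $RG$, and it is of course a unit. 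Write $g=a+b$ with $a\in{\rm P}(RG)$ and $b\in{\rm Nil}(RG)$. The key observation is that $ga=ag$, and substituting $g=a+b$ forces $ba=ab$; since a nilpotent element is periodic and $\operatorname{char}(RG)=\operatorname{char}(R)>0$, Lemma~\ref{center0} applies and yields $g\in{\rm P}(RG)$. Hence $g^{m}=g^{n}$ for some $m<n$, and, $g$ being a unit, $g^{\,n-m}=1$; thus ${\rm Z}(G)$ is an abelian torsion group, and therefore locally finite.

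For (2), note first that $p\in{\rm Nil}(R)$ forces $\operatorname{char}(R)=p^{\,i}$ for some $i\ge 1$, so $\operatorname{char}(RG)>0$. I would then show that the augmentation ideal $\Delta_R(G)$ of $RG$ is nil: any element of it has finite support, which generates a finite subgroup $H$; as $G$ is a $p$-group, $H$ is a finite $p$-group, and since $p$ is nilpotent in $R$, the augmentation ideal $\Delta_R(H)$ of $RH$ is nilpotent (the classical nilpotency of the augmentation ideal of a finite $p$-group over a ring in which $p$ is nilpotent). The chosen element lies in $\Delta_R(H)$ and so is nilpotent. Now $RG/\Delta_R(G)\cong R$ is unit semi-nil clean, $\Delta_R(G)$ is a nil ideal, and $\operatorname{char}(RG)>0$, so Lemma~\ref{lift2}(1) gives that $RG$ is unit semi-nil clean.

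For (3) (the left-perfect case being entirely symmetric), $R$ right perfect yields $R/{\rm J}(R)\cong\prod_{i=1}^{k}{\rm M}_{n_i}(D_i)$ for division rings $D_i$, while ${\rm J}(R)$ is locally nilpotent, in particular nil. Since units lift modulo ${\rm J}(R)$ and ${\rm U}(R)$ is torsion, the group ${\rm U}(R/{\rm J}(R))=\prod_i{\rm GL}_{n_i}(D_i)$ is torsion; hence each ${\rm GL}_{n_i}(D_i)$, and so each ${\rm U}(D_i)$, is torsion, whence by Jacobson's theorem each $D_i$ is a (commutative) locally finite field. Now fix a finite subgroup $H\le G$; the set ${\rm J}(R)H$ is a nil ideal of $RH$ (its elements have all coefficients in the locally nilpotent ideal ${\rm J}(R)$), and $RH/{\rm J}(R)H\cong(R/{\rm J}(R))H\cong\prod_i{\rm M}_{n_i}(D_iH)$. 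Each factor ${\rm M}_{n_i}(D_iH)$ is periodic: any one of its elements involves only finitely many coefficients, all lying in a finite subfield $\mathbb F_q\le D_i$, so it lies in the finite ring ${\rm M}_{n_i}(\mathbb F_qH)$ and is therefore periodic. Hence $(R/{\rm J}(R))H$ is periodic, and Lemma~\ref{lift} upgrades this to periodicity of $RH$; since $G$ is locally finite, every element of $RG$ lies in some such $RH$, so $RG$ is periodic.

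I expect the real content to lie in parts (2) and (3), namely in checking that the two ``augmentation-type'' ideals --- $\Delta_R(G)$ in (2) and ${\rm J}(R)H$ in (3) --- are nil, which is what lets Lemmas~\ref{lift2} and \ref{lift} do the rest: for (2) this is the classical nilpotency of the augmentation ideal of a finite $p$-group over a ring in which $p$ is nilpotent (carried over to the locally finite case subgroup by subgroup), and for (3) it comes from the nilpotency of the radical of a perfect ring together with $H$ being finite. By contrast, the crux of (1) is just the one-line remark that the centrality of $g$ promotes its semi-nil clean expression to a strong one, after which Lemma~\ref{center0} finishes the argument.
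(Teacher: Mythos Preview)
Your proof is correct and follows essentially the same route as the paper's. For (1) the paper uses the augmentation map exactly as you do and then invokes Lemma~\ref{center0} for the ``moreover'' clause in one line; your unpacking of this step (centrality of $g$ forces $ab=ba$ in the decomposition $g=a+b$) is precisely the intended argument. For (2) the paper cites Connell's result that $\Delta(G)$ is nil under these hypotheses and then applies Lemma~\ref{lift2}(1), whereas you reprove Connell's fact by passing to finite $p$-subgroups; the reduction via $RG/\Delta(G)\cong R$ and Lemma~\ref{lift2} is identical. For (3) the paper gives no argument at all, simply pointing to \cite[Theorem~1.6]{ABD}; your direct proof via Artin--Wedderburn, the local nilpotence of ${\rm J}(R)$ in a perfect ring (which does hold: left $T$-nilpotence yields, by a K\"onig's-lemma argument, that any finite subset of ${\rm J}(R)$ has all sufficiently long products equal to zero, so ${\rm J}(R)H$ is nil for finite $H$), and the local finiteness of the $D_i$ is a faithful reconstruction of that reference's method.
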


\begin{proof}
Suppose that $RG$ is a unit semi-nil clean ring and let $\alpha\in {\rm U}(R)$. Then, there exist $\beta\in {\rm P}(RG)$ and $\gamma\in{\rm Nil}(RG)$ such that $\alpha=\beta+\gamma$. Assume the map $\varepsilon :RG \to R$ is the augmentation homomorphism. Furthermore, we have $\alpha= \varepsilon(\beta)+\varepsilon(\gamma)$, where $\varepsilon(\beta)\in{\rm P}(R)$ and $\varepsilon(\gamma)\in{\rm Nil}(R)$. Thus, $R$ is obviously a unit semi-nil clean ring. Moreover, ${\rm Z}(G)$ is locally finite in accordance with Lemma~\ref{center0}. This proves~(1).

Suppose now that $p \in \operatorname{Nil}(R)$ and $G$ is a locally finite $p$-group. Referring to \cite[Proposition~16]{Conn}, one verifies that $\Delta(G)$ is nil, where $\Delta(G)$ is the augmentation ideal of $RG$. Thus, (2) follows from Lemma~\ref{lift2} with the aid of the isomorphism $RG/\Delta(G) \cong R$.

The proof of (3) is similar to the proof of \cite[Theorem~1.6]{ABD} and, therefore, we leave out the arguments.
\end{proof}


As an immediate consequence, we derive the next statement.

\begin{corollary}
Let $R$ be a right (resp., left) perfect ring and let $G$ be a locally nilpotent group. Then, $RG$ is periodic if and only if ${\rm U}(RG)$ is torsion, if and only if ${\rm U}(R)$ is torsion and $G$ is locally finite.
\end{corollary}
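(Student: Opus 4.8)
The corollary asserts that, for a right (or left) perfect ring $R$ and a locally nilpotent group $G$, the three conditions ``$RG$ periodic'', ``${\rm U}(RG)$ torsion'', and ``${\rm U}(R)$ torsion and $G$ locally finite'' are pairwise equivalent. The plan is to route everything through the machinery already assembled: Corollary~\ref{semisimple} (for perfect NI rings, unit semi-nil clean $\Leftrightarrow$ periodic $\Leftrightarrow$ ${\rm U}$ torsion), Proposition~\ref{group ring3}, and the observation that a perfect ring is in particular semiperfect with nil Jacobson radical, so that locally nilpotent torsion phenomena reduce modulo ${\rm J}(R)$ to the Artin--Wedderburn setting.

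First I would prove the implication ``${\rm U}(R)$ torsion and $G$ locally finite $\Rightarrow$ $RG$ periodic''. This is exactly Proposition~\ref{group ring3}(3), so no further work is needed here beyond citing it. Next, ``$RG$ periodic $\Rightarrow$ ${\rm U}(RG)$ torsion'' is immediate: in a periodic ring every unit $u$ satisfies $u^m = u^n$ with $m>n$, hence $u^{m-n} = 1$, so ${\rm U}(RG)$ is a torsion group. The substantive direction is therefore ``${\rm U}(RG)$ torsion $\Rightarrow$ ${\rm U}(R)$ torsion and $G$ locally finite''. Since ${\rm U}(R)$ embeds in ${\rm U}(RG)$ via the canonical inclusion $R \hookrightarrow RG$, the torsionness of ${\rm U}(R)$ is clear. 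For $G$ locally finite: every $g \in G$ has $g \in {\rm U}(RG)$, so $g$ has finite order in ${\rm U}(RG)$, whence $G$ is a torsion group; being also locally nilpotent, a torsion locally nilpotent group is locally finite (a finitely generated nilpotent torsion group is finite), and that finishes this step.

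At this point the cycle ``${\rm U}(R)$ torsion and $G$ loc.\ finite $\Rightarrow$ $RG$ periodic $\Rightarrow$ ${\rm U}(RG)$ torsion $\Rightarrow$ ${\rm U}(R)$ torsion and $G$ loc.\ finite'' is closed, which gives all three equivalences. The only place where the \emph{perfect} hypothesis on $R$ is genuinely used is in Proposition~\ref{group ring3}(3) (which itself mirrors \cite[Theorem~1.6]{ABD}): one reduces modulo the nil ideal ${\rm J}(R)$, applies Artin--Wedderburn to $R/{\rm J}(R) \cong \prod {\rm M}_{n_i}(D_i)$ with each $D_i$ periodic (because ${\rm U}(D_i)$ is torsion, so $D_i$ is an algebraic algebra over a finite field, hence locally finite by Jacobson's theorem), uses Corollary~\ref{matrix1} to see each ${\rm M}_{n_i}(D_i)$ is periodic, then lifts periodicity of the group ring over $R/{\rm J}(R)$ back up through the nil ideal ${\rm J}(R) \cdot RG$ using Lemma~\ref{lift} together with the fact that $G$ locally finite makes the relevant augmentation-type ideals behave well.

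The main obstacle I anticipate is precisely the content hidden inside Proposition~\ref{group ring3}(3): one must confirm that over a semisimple ring $\prod {\rm M}_{n_i}(D_i)$ with each $D_i$ a locally finite field, the group ring of a locally finite group is periodic, and that this survives the passage back through the nil radical ${\rm J}(R)$ of the perfect ring. The delicate point is that $({\rm J}(R))G$ need not be nil in general, but for a \emph{locally finite} group $G$ over a ring whose Jacobson radical is nil and \emph{perfect} (so ${\rm J}(R)$ is even $T$-nilpotent), one gets the needed nilpotence of the ideal generated by ${\rm J}(R)$ in $RG$; this is the technical heart and is exactly why the statement quotes \cite[Theorem~1.6]{ABD} rather than reproving it. Everything else is bookkeeping with the already-established lemmas.
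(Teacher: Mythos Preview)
Your proposal is correct and matches the paper's intent: the paper records the corollary as ``an immediate consequence'' of Proposition~\ref{group ring3} without writing out a proof, and your cycle---citing Proposition~\ref{group ring3}(3) for one direction, the trivial observation that units in a periodic ring are torsion for another, and the embedding $G \hookrightarrow {\rm U}(RG)$ together with ``locally nilpotent $+$ torsion $\Rightarrow$ locally finite'' for the remaining one---is exactly the intended route. Your last paragraph about the internals of Proposition~\ref{group ring3}(3) is commentary on that proposition rather than on the corollary itself, so it is not needed here, but nothing in it is wrong.
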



Using a combination of Corollary~\ref{semisimple} and Proposition~\ref{group ring3}, we simply deduce the next result.

\begin{corollary}
Let $R$ be a right (resp., left) perfect NI ring and let $G$ be a locally finite group. Then, $RG$ is unit semi-nil clean if and only if $RG$ is periodic, if and only if ${\rm U}(R)$ is torsion.
\end{corollary}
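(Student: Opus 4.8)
The plan is to combine the two preceding results, Corollary~\ref{semisimple} and Proposition~\ref{group ring3}, together with the group-ring periodicity criterion already recorded in the paper. First I would prove the chain of implications in the cyclic order ``$RG$ unit semi-nil clean $\Rightarrow$ ${\rm U}(R)$ torsion $\Rightarrow$ $RG$ periodic $\Rightarrow$ $RG$ unit semi-nil clean'', which is the cleanest way to get a three-way equivalence.

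For the first implication, assume $RG$ is unit semi-nil clean. By Proposition~\ref{group ring3}(1), applying the augmentation homomorphism $\varepsilon\colon RG\to R$, the base ring $R$ is itself unit semi-nil clean; since $R$ is a right (resp., left) perfect NI ring, Corollary~\ref{semisimple} forces $R$ to be periodic, and in particular ${\rm U}(R)$ is torsion. (This also shows $\ch(R)>0$, which I will want implicitly when invoking the other parts.) For the second implication, suppose ${\rm U}(R)$ is torsion. Since $R$ is right (resp., left) perfect and $G$ is locally finite, Proposition~\ref{group ring3}(3) gives directly that $RG$ is periodic. For the third implication, if $RG$ is periodic then every element of $RG$ is periodic, hence trivially every unit of $RG$ is a sum of a periodic element and $0\in{\rm Nil}(RG)$, so $RG$ is unit semi-nil clean. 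Assembling these three gives that the three conditions ``$RG$ is unit semi-nil clean'', ``$RG$ is periodic'', and ``${\rm U}(R)$ is torsion'' are pairwise equivalent, which is exactly the claim.

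I do not expect any genuine obstacle here: the result is essentially a bookkeeping corollary, and every nontrivial input has already been established. The only point requiring a little care is making sure the hypotheses line up when citing Proposition~\ref{group ring3}(3) — that proposition requires $R$ right (resp., left) perfect, ${\rm U}(R)$ torsion, and $G$ locally finite, all of which are available in the second implication — and noting that ``$R$ perfect NI'' passes to ``$R$ perfect'' for that citation. One should also remark that ``NI'' is used only through Corollary~\ref{semisimple} in the first implication; the converse directions need only the perfectness of $R$ and local finiteness of $G$. Thus the proof is a short three-line cycle, and I would present it essentially verbatim as above, possibly compressing the trivial third implication into a single clause.
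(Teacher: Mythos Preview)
Your proposal is correct and follows exactly the route the paper intends: the paper's own proof is simply the one-line remark ``Using a combination of Corollary~\ref{semisimple} and Proposition~\ref{group ring3}, we simply deduce the next result,'' and your three-implication cycle is precisely how that combination unpacks. Your version is in fact more detailed than what the paper records.
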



\medskip

\section{Some Versions of Weakly Periodic Rings}\label{sec5}

We initiate in this section the exploration of certain variants of weakly periodic rings which are relevant to the class of {\it fine rings}, defined in \cite{CL} as $R\setminus \{0\}\subseteq {\rm U}(R)+{\rm Nil}(R)$. Precisely, we are devoted here to the study of those rings for which $R\setminus \{0\} \subseteq  \mathcal{T}(R) + {\rm Nil}(R)$.

\medskip

In this aspect, we shall say that a non-zero element $a$ in a ring $R$ is {\it torsion-fine} (or just {\it t-fine} for short) if $a=u+b$ for some $u\in \mathcal{T}(R)$ and $b\in{\rm Nil}(R)$. So, in this light, a {\it t-fine ring} is a ring in which all non-zero elements are t-fine.

Note that, if $R$ is a fine ring, then by \cite[Theorem~2.3]{CL} the ring $R$ is simple; in particular, ${\it J}(R)=\{0\}$. 


\begin{remark}
The following simple observations are useful.
\begin{itemize}
\item[(i)] Evidently, each t-fine ring is weakly periodic, but the converse is {\it not} true in general. To see this, let $R=\prod_{n=1}^\infty {\rm M}_n(\mathbb Z_2)$. Thus, in virtue of \cite[Example~3.1]{Ser}, $R$ is weakly periodic, but $R$ is not a fine ring as it is not simple.
\item[(ii)] A finite (periodic) ring need not be t-fine (for example, just consider the ring $\mathbb Z_4$).
\item[(iii)] If $R$ is both a periodic ring and a fine ring, then $R$ is t-fine.
\item[(iv)] There is a fine ring that is {\it not} t-fine. For instance, the rational field $\mathbb Q$ is obviously a fine ring, but its multiplicative group is not torsion,  so this field is {\it not} t-fine. Even more, each non-commutative division ring $D$ is fine, but, as the next lemma shows, $D$ is {\it not} t-fine.
\end{itemize}
\end{remark}


\begin{lemma}
Suppose that  $R$ is a t-fine ring. If ${\rm Nil}(R)$ is additively closed or $R$ is a local ring, then $R$ is a locally finite field.
\end{lemma}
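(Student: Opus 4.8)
The plan is to show first that $R$ has positive characteristic and is in fact a division ring, then upgrade "division ring" to "commutative" using the torsion-unit condition, and finally invoke the earlier machinery to get "locally finite field." I would begin by noting that in either case the ring is reduced: if ${\rm Nil}(R)$ is additively closed, then since $R$ is t-fine (hence weakly periodic) Theorem~\ref{NI} applies to give ${\rm J}(R)={\rm Nil}(R)$ and $R$ periodic, so by Proposition~\ref{center} each central subring is periodic and one sees ${\rm Nil}(R)$ must in fact be zero — more directly, a t-fine ring with additively closed nil set forces any nonzero nilpotent $b$ to be written as $u + b'$ with $u$ torsion and $b'$ nilpotent, but then $u = b - b'$ is nilpotent and torsion, hence $u$ is both a unit and nilpotent, impossible unless $b=0$; so ${\rm Nil}(R)=\{0\}$ and every nonzero element is torsion, i.e. lies in $\mathcal{T}(R)\subseteq {\rm U}(R)$, making $R$ a division ring. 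In the local case, ${\rm Nil}(R)\subseteq {\rm J}(R)$, and if ${\rm J}(R)\neq \{0\}$ pick $0\neq x\in {\rm J}(R)$; write $x=u+b$ with $u$ torsion and $b$ nilpotent, so $u=x-b$; since $x\in{\rm J}(R)$ and, in a local ring, ${\rm Nil}(R)={\rm J}(R)$ would be needed — instead I would argue that $u$ a unit with $u-x\in{\rm Nil}(R)\subseteq{\rm J}(R)$ forces $x\in u+{\rm J}(R)\subseteq {\rm U}(R)$, contradicting $x\in{\rm J}(R)$ (a nonzero element of ${\rm J}(R)$ is never a unit). Hence ${\rm J}(R)=\{0\}$, so ${\rm Nil}(R)=\{0\}$, and a local ring with zero Jacobson radical is a division ring.

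So in both situations $R$ is a division ring in which every nonzero element is torsion, i.e. $R=\mathcal{T}(R)\cup\{0\}$. Next I would pin down the characteristic: if $\operatorname{char}(R)=0$ then $\mathbb{Q}$ embeds as a central subring, but a central subring of a t-fine (hence weakly periodic) ring is periodic by Proposition~\ref{center}(1), and $\mathbb{Q}$ is not periodic — contradiction. Hence $\operatorname{char}(R)=p>0$, so the prime field $\mathbb{F}_p$ sits inside $R$, and $R$ is a division algebra over $\mathbb{F}_p$ all of whose elements are algebraic over $\mathbb{F}_p$ (each nonzero $a$ satisfies $a^{n}=a$ for some $n>1$ because it is torsion in $\mathcal{T}(R)$, giving $a^{p^{k}}=a$ for suitable $k$ by the argument used in the proof of Lemma~\ref{center1}). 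Then, for each $a\in R$, the subring $\mathbb{F}_p[a]$ is a finite integral domain, hence a finite field; in particular every finitely generated subring is a finite set.

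It remains to prove commutativity and the "locally finite" conclusion. Commutativity follows from Jacobson's classical theorem (already invoked via Herstein's result in Proposition~\ref{center}): a ring in which every element satisfies $x^{n(x)}=x$ is commutative, so $R$ is a field. Being a field in which every element is algebraic over $\mathbb{F}_p$ with every $\mathbb{F}_p[a]$ finite, $R$ is an algebraic extension of $\mathbb{F}_p$, equivalently a locally finite field, which is exactly the claim. I expect the main obstacle to be the bookkeeping at the very start — cleanly deducing ${\rm Nil}(R)=\{0\}$ and that $R$ is a division ring from the two hypotheses — since after that point the conclusion is essentially forced by Jacobson's commutativity theorem and the algebraic-over-$\mathbb{F}_p$ structure, mirroring the proof of Theorem~\ref{algebraic2}.
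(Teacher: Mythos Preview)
Your proof is correct and follows the same high-level plan as the paper: reduce to showing that $R$ is a division ring whose nonzero elements are all torsion units, and then conclude that $R$ is a locally finite field. The paper's version is far terser---it simply invokes \cite[Proposition~2.5]{CL} and \cite[Corollary~2.4]{CL} to obtain ${\rm Nil}(R)=\{0\}$ (in the additively-closed case) and that $R$ is a division ring (in the local case), and then asserts the conclusion without spelling out the Jacobson step. Your direct arguments for these reductions are sound: in the additively-closed case, writing a nonzero nilpotent $b=u+b'$ and observing $u=b-b'\in{\rm Nil}(R)\cap{\rm U}(R)$ is exactly the content of the cited \cite{CL} proposition specialized to t-fine rings; in the local case, your observation that $b\in{\rm Nil}(R)\subseteq{\rm J}(R)$ forces $x\in u+{\rm J}(R)\subseteq{\rm U}(R)$ is a clean replacement for the cited corollary. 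You also supply the passage ``division ring with torsion unit group $\Rightarrow$ locally finite field'' explicitly via characteristic $p>0$ and Jacobson's commutativity theorem, which the paper leaves implicit. So the two proofs agree in substance; yours is simply more self-contained.
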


\begin{proof}
Suppose that  ${\rm Nil}(R)$ is additively closed. Thanks to \cite[Proposition~2.5]{CL}, we know that ${\rm Nil}(R)=\{0\}$. Thus, $R\setminus \{0\} =  \mathcal{T}(R)$. That is, $R$ is a division ring. If $R$ is a local ring, then, again, $R$ is a division ring by \cite[Corollary~2.4]{CL}.
Thus, in both situations, $R$ is a locally finite field, as stated.
\end{proof}


Thus, as a consequence, a commutative ring $R$ is t-fine if, and only if, $R$ is a locally finite field. We, however, will extend this result in Corollary~\ref{comm-fine} listed below.

\medskip

We next are able to establish a non-trivial strengthening of \cite[Theorem~3.1]{CL}.

\begin{theorem}\label{matrix3}
For every t-fine ring $R$, the matrix ring ${\rm M}_n(R)$ is too t-fine for any $n\geq 1$.
\end{theorem}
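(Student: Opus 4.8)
The plan is to adapt the strategy used in \cite[Theorem~3.1]{CL} for fine rings, replacing units with torsion units throughout, and being careful that the ``torsion'' condition is preserved under the constructions employed. First I would reduce to the case of a single non-zero matrix $A \in {\rm M}_n(R)$ that I want to write as $U + N$ with $U \in \mathcal T({\rm M}_n(R))$ and $N \in {\rm Nil}({\rm M}_n(R))$. The base case $n=1$ is the hypothesis. For the inductive step, the key structural fact (available from the fine-ring machinery of \cite{CL}) is that any non-zero matrix can, after multiplying on the left and right by suitable permutation or elementary matrices, be brought to a block form in which a controlled corner is invertible, or in which one can peel off a $1\times 1$ block handled by the t-fine property of $R$ and an $(n-1)\times(n-1)$ block handled by induction. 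Concretely, I expect to distinguish whether $A$ has a unit entry (up to the elementary operations above) or not; in the former case one clears a row and column to isolate an invertible $1\times 1$ block and an $(n-1)\times(n-1)$ remainder, while in the latter case all entries lie in a ``small'' set and a direct nilpotent-plus-torsion decomposition can be assembled by hand, much as in the $2\times 2$ computations of \cite{CL}.

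The crucial extra bookkeeping compared with \cite{CL} is torsion control. Whenever I build the ``unit part'' $U$ of ${\rm M}_n(R)$ out of a torsion unit $u$ of $R$ (from the t-fine decomposition of a scalar entry) together with permutation matrices, elementary transvections $I + rE_{ij}$, and block-diagonal assemblies of previously constructed torsion matrices, I must check the resulting matrix is genuinely torsion in ${\rm GL}_n(R)$. Permutation matrices are torsion; a block-diagonal matrix $\diag(U_1, U_2)$ with $U_1, U_2$ torsion is torsion; and conjugating a torsion matrix by an arbitrary invertible matrix keeps it torsion. The only delicate point is the transvections $I + rE_{ij}$ with $i \neq j$: these have infinite multiplicative order when $r$ is not nilpotent-ish, so I must arrange that every such transvection I introduce either gets absorbed into the nilpotent summand $N$ (since $rE_{ij}$ with $i\neq j$ is itself nilpotent, hence $I + rE_{ij}$ is unipotent, hence — crucially invoking that $\ch(R) > 0$, which holds here by Lemma~\ref{center1} applied to the t-fine, hence weakly periodic, ring $R$ — it is actually torsion by \cite[Theorem~3.2]{Lam-Danchev}), or is conjugated away. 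In other words, the positive-characteristic hypothesis that t-fine rings automatically satisfy is exactly what upgrades ``unipotent'' to ``torsion'' and lets the \cite{CL} argument go through verbatim on the torsion level.

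The steps in order: (1) record that a t-fine ring has $\ch(R) > 0$, so unipotent matrices over $R$ are torsion and $I + N'$ is torsion for any nilpotent $N' \in {\rm M}_n(R)$; (2) set up the induction on $n$, base case $n = 1$ being the definition of t-fine; (3) given $0 \neq A \in {\rm M}_n(R)$, use row/column operations over $R$ (left/right multiplication by torsion matrices, namely permutations and unipotents) to reduce to a normal form that either exposes an invertible scalar in a corner or shows $A$ is, up to such moves, a matrix all of whose nonzero data sits in an $(n-1)\times(n-1)$ corner; (4) in the corner case apply the induction hypothesis to the smaller block and the t-fine property to the $1\times 1$ block, then reassemble, using block-diagonal-plus-transvection bookkeeping and step (1) to certify the unit part is torsion; (5) undo the reduction moves from (3), noting that conjugating/multiplying the decomposition $U + N$ by the torsion matrices used does not disturb torsion-ness of $U$ nor nilpotence of $N$.

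The main obstacle I anticipate is step (3)–(4): making the reduction to normal form work when $R$ is not local and $A$ has \emph{no} unit entry even after elementary moves — this is precisely where \cite{CL} does its most careful $2\times 2$ casework, and I would need to check that each explicit matrix they write down as the ``unit'' summand is torsion rather than merely invertible. I expect this to come down, in every case, to that summand being a product of a permutation matrix and a unipotent matrix, which step (1) handles; but verifying it leaves no stray genuinely-non-torsion unit requires going through their construction line by line. A secondary, milder obstacle is ensuring that when I split off a $1\times 1$ torsion-unit block $u$ from $R$, the scalar matrix $\diag(u, I_{n-1})$ combined with the induction-produced torsion block is still torsion — this is immediate since block-diagonal of torsion is torsion, but it must be stated.
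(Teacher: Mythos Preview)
Your core insight---that a t-fine ring has positive characteristic, so every unipotent matrix over it is torsion---is exactly what the paper's proof hinges on, and your instinct to adapt \cite{CL} is sound. But your plan diverges from the paper in one way that creates a genuine gap and in another that creates unnecessary work.

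The gap is in steps~(3) and~(5). You propose to apply row \emph{and} column operations, i.e., to replace $A$ by $PAQ$ for torsion matrices $P,Q$, and then ``undo'' these at the end. But if $PAQ=U+N$ with $U$ torsion and $N$ nilpotent, then $A=P^{-1}UQ^{-1}+P^{-1}NQ^{-1}$, and there is no reason for $P^{-1}NQ^{-1}$ to be nilpotent (nor $P^{-1}UQ^{-1}$ to be torsion) unless $Q=P^{-1}$. Only \emph{similarity} preserves the t-fine property of an element; two-sided equivalence does not. The paper records this at the outset and works exclusively with conjugation.

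The unnecessary work is your anticipated casework on whether $A$ has a unit entry. The paper sidesteps all of it. It first disposes of $|R|=2$ by quoting \cite[Theorem~3.1]{CL} directly. For $|R|>2$ it writes $1=u+v$ with $u,v\in{\rm U}(R)$ via \cite[Theorem~2.8]{CL}, and then invokes \cite[Proposition~3.9]{CL} to conclude that any $M\in{\rm M}_n(R)$ is \emph{similar} to one whose $(n{-}1)\times(n{-}1)$ upper-left block $A$ and whose $(n,n)$ entry $d$ are both non-zero. Induction gives $A=U+N$ and $d=v+t$ with $U,v$ torsion and $N,t$ nilpotent, and one simply writes
\[
M=\begin{pmatrix} U & 0 \\ \gamma & v \end{pmatrix} + \begin{pmatrix} N & \beta \\ 0 & t \end{pmatrix}.
\]
The second summand is block upper-triangular with nilpotent diagonal blocks, hence nilpotent. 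The first summand $X$ is block lower-triangular with torsion diagonal blocks $U$ and $v$, so some power $X^{km}$ has identity blocks on the diagonal and is therefore unipotent; by your own step~(1) it follows that $X$ is torsion. No row/column reduction, no hunt for a unit entry, no $2\times 2$ casework from \cite{CL} to re-verify.
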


\begin{proof}
Firstly, it is easy to see that, if $a$ is a t-fine element in a ring $S$ and $b\in S$ is similar to $a$ (i.e., $b=uau^{-1}$ for some $u\in{\rm U}(S)$), then $b$ is also t-fine.

If $|R|=2$, then $R\cong\mathbb F_2$. Thus, we invoke \cite[Theorem~3.1]{CL}, inferring that ${\rm M}_n(\mathbb F_2)$ is t-fine.

Assume now that $|R|>2$. The proof will be proceeded by induction on $n$. The case $n=1$ being trivial, suppose $n\geq 2$. Let $M=\begin{pmatrix} A & \beta \\ \gamma & d \end{pmatrix}\in {\rm M}_n(R)$, where $A\in {\rm M}_{n-1}(R)$. Since  $|R|>2$, \cite[Theorem~2.8]{CL} can be applied to write that $1=u+v$, where $u,v\in {\rm U}(R)$. Therefore, using \cite[Proposition~3.9]{CL} and combining it with what we saw in the first paragraph about the invariance of t-fine elements by similarity, we may assume that both $A$ and $d$ are non-zero elements.

Write, by induction hypotheses, that $A=U+N$, where $U$ is a torsion unit and $N$ is a nilpotent element in ${\rm M}_{n-1}(R)$. Similarly, write $d=v+t$, where $v\in{\rm U}(R)$ and $t\in{\rm Nil}(R)$. Then, one represents that 
$$M=\begin{pmatrix} U & 0 \\ \gamma & v \end{pmatrix} + \begin{pmatrix} N & \beta \\ 0 & t \end{pmatrix}.$$
It is easily seen that the second matrix on the right-hand side above is nilpotent, and the first one, $X$ say, is $\pi$-UU (indeed, since $R$ is t-fine, one may write $v^m=1$ for some $m\geq 1$; in addition, write $U^k=1$ for some $k\geq 1$. Hence, $X^{km}$ possesses diagonal entries equal to $1$ whence it is unipotent), and so torsion with the help of Lemma~\ref{center1}. Thus, $M$ is t-fine, as required.
\end{proof}


The following  significant corollary is a non-trivial consequence of Theorem~\ref{matrix3}.

\begin{corollary}\label{comm-fine}
Let $R$ be a commutative ring and let $n$ be a positive integer. Then, ${\rm M}_n(R)$ is a t-fine ring if and only if $R$ is a locally finite field.
\end{corollary}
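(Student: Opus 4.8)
The plan is to prove the two implications separately, leveraging Theorem~\ref{matrix3} for the harder direction.

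For the forward direction, suppose $\mathrm{M}_n(R)$ is t-fine. First I would observe that being t-fine passes to nonzero homomorphic images when the quotient is again nonzero; more relevantly, since $\mathrm{M}_n(R)$ is in particular a fine ring, \cite[Theorem~2.3]{CL} forces $\mathrm{M}_n(R)$ to be simple, hence $R$ is simple. Also every t-fine ring is weakly periodic (Remark after Theorem~\ref{matrix3}'s setup), so $\mathrm{M}_n(R)$ is weakly periodic; by Corollary~\ref{matrix1} (with $R$ commutative, hence weakly $2$-primal) this forces $R$ to be periodic. A commutative simple ring is a field, and a periodic field is exactly a locally finite field (every finitely generated subring — equivalently subfield — is finite, since each element satisfies $x^m=x^n$ and thus generates a finite field over the prime field, which must have positive characteristic by Lemma~\ref{center1} applied through periodicity). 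Alternatively, and more directly, one can cite the Lemma immediately preceding Theorem~\ref{matrix3}: a commutative t-fine ring is a locally finite field, and then reduce the matrix case to $n=1$ via the observation that $R \cong e\,\mathrm{M}_n(R)\,e$ for a primitive idempotent $e$ is a corner of $\mathrm{M}_n(R)$ — but corners of t-fine rings need not be t-fine, so this route is more delicate; I would instead argue that if $\mathrm{M}_n(R)$ is t-fine then it is weakly periodic, apply Corollary~\ref{matrix1} to get $R$ periodic, then note $R$ commutative periodic is a locally finite ring and simplicity makes it a field.

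For the converse, assume $R$ is a locally finite field. Then $R$ is in particular a periodic ring, and it is a fine ring (every field is fine since for $a \neq 0$, if $a \neq 1$ write $a = (a-1) + 1$ with $a-1$ a unit... more carefully, any field with more than two elements is fine by \cite[Theorem~2.8]{CL}, and $\mathbb{F}_2$ is fine by inspection), hence $R$ is t-fine by Remark~(iii) (a ring that is both periodic and fine is t-fine) — or one checks directly that for $0 \neq a \in R$ the locally finite field $R$ has every nonzero element torsion in $\mathrm{U}(R)$, so $a = a + 0$ with $a \in \mathcal{T}(R)$, making $R$ trivially t-fine with $\mathrm{Nil}(R) = \{0\}$. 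Now apply Theorem~\ref{matrix3}: since $R$ is t-fine, $\mathrm{M}_n(R)$ is t-fine for every $n \geq 1$.

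The main obstacle is the forward direction's deduction that $R$ must be a field rather than merely a periodic commutative ring: one must extract simplicity of $R$ from t-fineness of $\mathrm{M}_n(R)$. The clean way is: t-fine $\Rightarrow$ fine, and by \cite[Theorem~2.3]{CL} fine rings are simple, so $\mathrm{M}_n(R)$ simple forces $R$ simple; then periodicity (from Corollary~\ref{matrix1} applied to the commutative, hence weakly $2$-primal, ring $R$, using that $\mathrm{M}_n(R)$ being t-fine is weakly periodic) together with commutativity and simplicity gives that $R$ is a field in which every element is periodic, i.e. a locally finite field. I would write the argument in exactly this order, keeping the citations to \cite{CL} and Corollary~\ref{matrix1} explicit.
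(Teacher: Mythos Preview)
Your proposal is correct, and the converse direction matches the paper's: a locally finite field is trivially t-fine (every nonzero element is already a torsion unit), and Theorem~\ref{matrix3} does the rest.

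For the forward direction you take a genuinely different route from the paper. The paper argues elementwise: for nonzero $a\in R$ it looks at the t-fine element $aE_{11}\in{\rm M}_n(R)$ and invokes \cite[Lemma~4.7]{CL} together with commutativity to conclude $aR=R$, so $R$ is a field; local finiteness then follows (implicitly via Proposition~\ref{center}(1), since $R$ sits as the center of the semi-nil clean ring ${\rm M}_n(R)$). You instead use the structural fact that fine rings are simple \cite[Theorem~2.3]{CL}, pass simplicity from ${\rm M}_n(R)$ down to $R$, and then argue periodicity of $R$. Both approaches are short; the paper's avoids Morita-type reasoning, while yours avoids the specific matrix lemma from \cite{CL}.

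One small repair: your appeal to Corollary~\ref{matrix1} is not quite on the nose, since that corollary lists ``${\rm M}_n(R)$ is \emph{periodic}'' as condition~(3), whereas you only have ``${\rm M}_n(R)$ is weakly periodic (hence semi-nil clean)''. The cleanest fix is to cite Proposition~\ref{center}(1) instead: ${\rm M}_n(R)$ is semi-nil clean, $R\cdot I_n$ is a central subring, hence $R$ is periodic. Alternatively, note that ${\rm M}_n(R)$ is a PI ring (as $R$ is commutative), so Corollary~\ref{matrix1} applied to the PI ring ${\rm M}_n(R)$ with $n=1$ upgrades ``semi-nil clean'' to ``periodic'', after which the implication (3)$\Rightarrow$(2) for $R$ gives what you want.
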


\begin{proof}
Assume that ${\rm M}_n(R)$ is a t-fine ring and let $a$ be a non-zero element in $R$. Thus, one detects that $aE_{11}$ is a t-fine element in ${\rm M}_n(R)$, where $E_{11}$ is the matrix unit whose $(1,1)$ entry is $1$ and all other entries are $0$. So, \cite[Lemma~4.7]{CL} and the fact that $R$ is commutative are in use to deduce that $aR=R=Ra$, i.e., $a\in{\rm U}(R)$. Consequently, $R$ is a locally finite field. This gives the ``only if'' part of the statement.

The ``if'' part follows automatically from Theorem~\ref{matrix3}, concluding the argumentation.
\end{proof}


Let ${\rm End}(G)$ be the endomorphism ring of an abelian group $G$. It was shown in \cite[Theorem 3.5]{ABD} that ${\rm End}(G)$ is periodic exactly when $G$ is a finite group. In this direction, we obtain the following.

\begin{corollary}
Let $G$ be an abelian group. Then, ${\rm End}(G)$ is a t-fine ring if and only if $G$ is a finite elementary abelian $p$-group for some prime $p$.
\end{corollary}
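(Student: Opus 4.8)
The plan is to characterize when $\mathrm{End}(G)$ is t-fine for an abelian group $G$. First I would dispose of the "if" direction: if $G$ is a finite elementary abelian $p$-group, say $G\cong(\mathbb Z_p)^n$, then $\mathrm{End}(G)\cong{\rm M}_n(\mathbb F_p)$, and since $\mathbb F_p$ is a locally finite field, Corollary~\ref{comm-fine} (or Theorem~\ref{matrix3} directly) gives that ${\rm M}_n(\mathbb F_p)$ is t-fine. So this direction is immediate.

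For the "only if" direction, suppose $R:=\mathrm{End}(G)$ is t-fine. Every t-fine ring is weakly periodic, hence semi-nil clean, so by the cited result \cite[Theorem~3.5]{ABD} that $\mathrm{End}(G)$ is periodic iff $G$ is finite, I first need to know $R$ is periodic; but a t-fine ring is weakly periodic and we are told (Remark after Theorem~\ref{matrix3}, and \cite{CL}) facts controlling $J(R)$. Actually the cleanest route: a t-fine ring has ${\rm J}(R)=\{0\}$ is false in general (t-fine need not be fine/simple), so instead I would argue as follows. By the lemma immediately preceding Theorem~\ref{matrix3}, if ${\rm Nil}(R)$ is additively closed then $R$ is a locally finite field; more usefully, for $\mathrm{End}(G)$ with $G$ abelian one can try to force $G$ to be bounded. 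Concretely: if $G$ had an element of infinite order, or were unbounded, I would produce a non-zero endomorphism that is not a sum of a torsion unit and a nilpotent — e.g. multiplication-by-$2$ on a $\mathbb Z$-summand, or the shift-type endomorphisms on $\bigoplus\mathbb Z_{p^k}$, none of which can be written as torsion-unit plus nilpotent because the torsion units of $\mathrm{End}(\mathbb Z)=\mathbb Z$ are just $\pm1$ and $\mathrm{Nil}(\mathbb Z)=0$. The key structural step is: $R$ t-fine $\Rightarrow$ $R$ periodic (use weak periodicity plus the fact proven in Section~2 that relevant Jacobson radicals are nil, combined with $\mathrm{End}(G)$ being semi-nil clean), hence by \cite[Theorem~3.5]{ABD} $G$ is finite; then $\mathrm{End}(G)\cong\prod_p \mathrm{End}(G_p)$ over the finitely many primes dividing $|G|$, and each factor $\mathrm{End}(G_p)$ must itself be t-fine by Lemma~\ref{lift}-style decomposition (t-fineness passes to direct factors since units, torsion units, and nilpotents all split componentwise), so WLOG $G$ is a finite abelian $p$-group.

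Now the heart of the "only if": show a finite abelian $p$-group $G$ with $\mathrm{End}(G)$ t-fine must be elementary abelian. If $G$ is not elementary abelian, then $G$ has a cyclic summand $\mathbb Z_{p^k}$ with $k\geq 2$, so $\mathrm{End}(G)$ contains (as a corner ring, or via $aE_{11}$-type elements) a copy of $\mathbb Z_{p^k}$ with $k\geq2$. I would invoke \cite[Lemma~4.7]{CL} exactly as in the proof of Corollary~\ref{comm-fine}: if $eE$ is a t-fine idempotent-cut-out and the corner $e\mathrm{End}(G)e\cong\mathbb Z_{p^k}$, then t-fineness of that element forces the corner to be a field, but $\mathbb Z_{p^k}$ is not a field for $k\geq2$ — contradiction. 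Hence every cyclic summand of $G$ has order $p$, i.e. $G$ is elementary abelian; being finite, $G\cong(\mathbb Z_p)^n$ for a single prime $p$ (the multi-prime case already eliminated above). This completes the equivalence.

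The main obstacle I anticipate is the passage "t-fine $\Rightarrow$ periodic" for $\mathrm{End}(G)$: a t-fine ring need not be NI, so I cannot directly apply Theorem~\ref{NI}; I expect to need the weaker route of showing ${\rm Nil}(\mathrm{End}(G))$ behaves well, or—more robustly—to argue contrapositively that if $G$ is infinite then $\mathrm{End}(G)$ fails to be t-fine by exhibiting an explicit bad element (this sidesteps periodicity entirely, using only that $\mathbb Z$ or $\mathbb Z_{(p)}$-type subrings of $\mathrm{End}(G)$ have too few torsion units). A secondary subtlety is justifying that t-fineness descends to corner rings $e\mathrm{End}(G)e$ and to direct factors; for direct products this is routine since $\mathcal{T}$, ${\rm U}$, and ${\rm Nil}$ all factor through the projections, and for the corner one uses that $e\mathrm{End}(G)e\cong\mathrm{End}(eG)$ so it is again an endomorphism ring and the relevant units/nilpotents restrict, exactly the mechanism already used for $aE_{11}$ in Corollary~\ref{comm-fine}.
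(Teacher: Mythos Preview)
Your proposal contains a fundamental misconception that sends you down an unnecessarily complicated path. You write that ``a t-fine ring has ${\rm J}(R)=\{0\}$ is false in general (t-fine need not be fine/simple),'' but this is incorrect: since $\mathcal{T}(R)\subseteq{\rm U}(R)$, every t-fine ring is automatically fine, and hence simple by \cite[Theorem~2.3]{CL} (the paper itself records this just before the Remark opening Section~5). In particular ${\rm J}(R)=\{0\}$ always holds for t-fine rings.

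The paper's proof exploits this immediately: once ${\rm End}(G)$ is known to be simple, a structural theorem of Fuchs (\cite[Theorem~111.2]{Fuchs}) classifies abelian groups with simple endomorphism ring --- $G$ must be either a finite direct sum of copies of $\mathbb{Q}$ or a finite direct sum of copies of $\mathbb{Z}_p$ for a single prime $p$. The first case gives ${\rm End}(G)\cong{\rm M}_n(\mathbb{Q})$, which is ruled out by Corollary~\ref{comm-fine}, and the second is the desired conclusion. No periodicity argument, no contrapositive element-hunting, no corner-ring reduction is needed.

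Your workaround route inherits further problems from the same misconception. You claim t-fineness ``passes to direct factors,'' but a nontrivial direct product $R_1\times R_2$ is \emph{never} fine (hence never t-fine): writing $(1,0)$ as unit plus nilpotent would force a unit of $R_2$ to equal a nilpotent. Your appeal to \cite[Lemma~4.7]{CL} for the corner $e\,{\rm End}(G)\,e\cong\mathbb{Z}_{p^k}$ is also not justified as stated, since that lemma is about matrix rings over commutative rings, and ${\rm End}(G)$ for a non-elementary finite abelian $p$-group is not of that form. All of these difficulties evaporate once you use simplicity.
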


\begin{proof}
Set $R:={\rm End}(G)$. If $R$ is a t-fine ring, then it is simple. Thus, consulting with \cite[Theorem~111.2]{Fuchs}, $G$ is either a finite direct sum of copies the rational group $\mathbb{Q}$, or is a finite direct sum of the cyclic $p$-group $\mathbb{Z}_p$ for some prime number $p$. But, in the former case, it follows from \cite{Fuchs} that $R \cong \mathrm{M}_n(\mathbb{Q})$ for some $n \geq 1$, which is known by what we have established in Corollary~\ref{comm-fine} to be not t-fine. Therefore, the second case occurs, i.e., $G$ is a finite elementary abelian $p$-group, as asserted.

Oppositely, if $G$ is a direct sum of $s$ copies of $\mathbb{Z}_p$ for some positive integer $s$ and some prime number $p$, then one knows again from \cite{Fuchs} that $R \cong \mathrm{M}_s(\mathbb{Z}_p)$, which ring is t-fine appealing to Corollary~\ref{comm-fine}, as claimed.
\end{proof}

It is worthwhile noticing that the last corollary means that if ${\rm End}(G)$ is a t-fine ring for some abelian group $G$, then ${\rm End}(G)$ is a periodic ring, but the reverse claim manifestly fails.


\medskip

We finish the present work by posing the following two questions, which are closely related to Problem~\ref{1} quoted above and which seem very difficult to be answered at this stage.

\begin{problem}
Is there a t-fine ring that is \textit{not} periodic? Also, is any t-fine ring clean or even strongly $\pi$-regular?
\end{problem}

Notice that the first query will surely be settled in the affirmative if we succeed to construct a t-fine ring whose unit group is {\it not} torsion.


\bigskip

\noindent{\bf Acknowledgments.} The research work of M. H. Bien was funded by Vietnam National Foundation for Science and Technology Development (NAFOSTED) under Grant No. 101.04-2023.18. The research work of P. V. Danchev is supported in part by the Junta de Andalucia under Grant FQM 264. The research work of M. Ramezan-Nassab is supported in part by a grant from IPM (Grant No. 1403160021).


\bigskip

\noindent{\bf Declarations.} Our statements here are the following ones:

\medskip

\begin{itemize}
\item {\bf Ethical Declarations and Approval:} The authors have no competing interests to declare that are relevant to the content of this article.

\medskip

\item {\bf Competing Interests:} The authors declare no any conflict of interest.

\medskip

\item {\bf Availability of Data and Materials:} Data sharing is not applicable to this article as no data-sets or any other materials were generated or analyzed during the current study.
\end{itemize}


\bigskip
\bigskip


\begin{thebibliography}{10}

\bibitem{ABD}
A.N. Abyzov, R. Barati and P.V. Danchev, Rings close to periodic with applications to matrix, endomorphism and group rings, {\it Commun. Algebra} {\bf 52} (2024), 1832--1852.

\bibitem{AT}
A.N. Abyzov and D.T. Tapkin, When is every matrix over a ring the sum of two tripotents, {\it Linear Algebra Appl.} \textbf{630} (2021), 316--325.

\bibitem{Atiyah}
M.F. Atiyah and I.G. MacDonald, {\it Introduction to Commutative Algebra}, Addison-Wesley, 1969.

\bibitem{Bell}
H.E. Bell and A.A. Klein, On finiteness, commutativity and periodicity in rings, {\it Math. J. Okayama Univ.} \textbf{35} (1993), 181--188.

\bibitem{Bell2}
H.E. Bell and H. Tominaga, On periodic rings and related rings, {\it Math. J. Okayama Univ.} \textbf{28} (1986), 101--103.

\bibitem{Additively}
M.H. Bien, P.V. Danchev and M. Ramezan-Nassab, Rings additively generated by periodic elements, {\it J. Algebra Appl.} {\bf 24} (2025). DOI: 10.1142/S0219498825503220

\bibitem{Bisht}
N. Bisht, Semi-nil clean rings, {\it Palest. J. Math.} {\bf 12}(3) (2023), 1--6.

\bibitem{Bou}
A.D. Bouzidi, A. Cherchem and  A. Leroy, Exponents of skew polynomials over periodic rings, {\it Commun. Algebra}
{\bf 49} (2021), 1639--1655.

\bibitem{CL}
G. C\v{a}lug\v{a}reanu and T.Y. Lam, Fine rings: a new class of simple rings, {\it J. Algebra Appl.} {\bf 15}(9) (2016).

\bibitem{Bo_Co_97}
P.M. Cohn, {\it Skew Fields, Theory of General Division Rings}, Cambridge University Press, Cambridge, 1995.

\bibitem{Sheibani}
H. Chen and M. Sheibani, {\it Theory of Clean Rings and Matrices}, Word Scientific Publishing Co., 2022.

\bibitem{Conn}
I.G. Connell, On the group ring, {\it Can. J. Math.} {\bf 15} (1963), 650--685.

\bibitem{Cui}
J. Cui and P.V. Danchev, Some new characterizations of periodic rings, {\it J. Algebra Appl.} {\bf 19}(12) (2020).

\bibitem{Lam-Danchev}
P.V. Danchev and T.-Y. Lam. Rings with unipotent units, {\it Publ. Math. Debrecen} {\bf 88}(3-4) (2016), 449--466.

\bibitem{Fuchs}
L. Fuchs, {\it Infinite Abelian Groups}, Vol. {\bf 2}, Academic Press, New York, 1973.

\bibitem{Herstein}
I.N. Herstein, A note on rings with central nilpotent elements, {\it Proc. Am. Math. Soc.} {\bf 5} (1954), 620.

\bibitem{Hirano}
Y. Hirano, On periodic P.I. rings and locally finite rings, {\it Math. J. Okayama Univ.} \textbf{33} (1991), 115--120.

\bibitem{Kim}
C. Huh, N.K. Kim and Y. Lee, Examples of strongly $\pi$-regular rings, {\it J. Pure Appl. Algebra} \textbf{189} (2004), 195--210.

\bibitem{Leroy}
P. Kanwar, A. Leroy and J. Matczuk, Clean elements in polynomial rings, {\it Contemp. Math.} {\bf 634} (2015), 197--204.

\bibitem{UNC}
A. Karimi-Mansoub, T. Kosan and Y. Zhou, Rings in which every unit is a sum of a nilpotent and an idempotent, {\it Contemp. Math.} {\bf 715} (2018), 189--203.

\bibitem{lift}
D. Khurana, Lifting potent elements modulo nil ideals, {\it J. Pure Appl. Algebra} {\bf 225} (2021).

\bibitem{Klein}
A.A. Klein and  H.E. Bell, Rings with finitely many nilpotent elements, {\it Commun. Algebra} {\bf 22} (1994), 349--354.

\bibitem{Semi-clean GR}
L. Klingler, K.A. Loper, W.Wm. McGovern and M. Toeniskoetter, Semi-clean group rings, {\it J. Pure Appl. Algebra}
{\bf 225}(11) (2021).

\bibitem{Lam}
T.-Y. Lam, {\it A First Course in Noncommutative Rings}, Second edition, GTM, Vol. \textbf{131}, Springer-Verlag, New York, 2001.

\bibitem{Lee}
G.T. Lee, {\it Group Identities on Units and Symmetric Units of Group Rings}, Series Algebras and Applications, Vol. {\bf 12}, Springer, London, 2010.

\bibitem{Ni-Zh}
W.K. Nicholson Y. Zhou, Rings in which elements are uniquely the sum of an idempotent and a unit, {\it Glasgow Math. J.} {\bf 46} (2004), 227--236.

\bibitem{Sehgal}
C. Polcino Milies and S. Sehgal, {\it An Introduction to Group Rings}, Series Algebras and Applications, Kluwer Academic Publishers, 2002.

\bibitem{Rhemtulla}
A. H. Rhemtulla, Right-ordered groups, {\it Canad. J. Math.} {\bf 24} (1972), 891--895.

\bibitem{Ser}
J. \v{S}ter, On expressing matrices over $\Z_2$ as the sum of an idempotent and a nilpotent, {\it Linear Algebra Appl.} {\bf 544} (2018), 339--349.

\bibitem{nil-clean GR}
S. Sahinkaya, G. Tang and Y. Zhou, Nil-clean group rings, {\it J. Algebra Appl.} {\bf 16} (2017).

\bibitem{Ye}
Y. Ye, Semiclean rings, {\it Commun. Algebra} {\bf 31} (2003), 5609--5625.

\bibitem{YKZ}
Z. Ying, T. Ko\c{s}an and Y. Zhou, Rings in which every element is a sum of two tripotents, {\it Canad. Math. Bull.} \textbf{59}(3) (2016), 661--672.

\end{thebibliography}
\end{document}